\newcolumntype{P}[1]{>{\centering\arraybackslash}p{#1}}
\newtheorem{prop}{Proposition}[section]
\newtheorem{lemma}[prop]{Lemma}
\newtheorem{cor}[prop]{Corollary}
\newtheorem{teo}[prop]{Theorem}
\newtheorem*{main-teo}{Main Theorem}
\newtheorem*{Weyl's dimension}{Weyl's dimension formula}
\newtheorem*{Borel-Weil}{Borel-Weil's Theorem}
\newtheorem*{prop*}{Proposition}
\newtheorem*{lemma*}{Lemma}
\newtheorem*{teo*}{Theorem}
\newtheorem*{cor*}{Corollary}
\theoremstyle{definition}
\newtheorem{Def}[prop]{\normalfont \scshape Definition}
\newtheorem{es}[prop]{\normalfont \scshape Example}
\newtheorem{obs}[prop]{\normalfont \scshape Remark}
\newtheorem*{Def*}{\normalfont \scshape Definition}
\newtheorem*{obs*}{\normalfont \scshape Remark}
\newtheorem*{es*}{\normalfont \scshape Example}
\numberwithin{equation}{section}
\numberwithin{figure}{section}
\numberwithin{table}{section}
\newcounter{daggerfootnote}
\DeclareMathOperator{\Ker}{ker}
\DeclareMathOperator{\im}{Im}
\DeclareMathOperator{\Sing}{Sing}
\DeclareMathOperator{\Pic}{Pic}
\DeclareMathOperator{\Hilb}{Hilb}
\DeclareMathOperator{\Sym}{Sym}
\DeclareMathOperator{\Gr}{Gr}
\DeclareMathOperator{\IG}{IG}
\DeclareMathOperator{\LG}{LG}
\DeclareMathOperator{\Q}{Q}
\DeclareMathOperator{\OP}{\mathbb O \mathbb P}
\DeclareMathOperator{\rk}{Rk}
\DeclareMathOperator{\expdim}{expdim}
\DeclareMathOperator{\Terr}{Terr}
\DeclareMathOperator{\Mat}{Mat}
\DeclareMathOperator{\GL}{GL}
\DeclareMathOperator{\SL}{SL}
\DeclareMathOperator{\Sp}{Sp}
\DeclareMathOperator{\Spin}{Spin}
\DeclareMathOperator{\Orth}{Orth}
\DeclareMathOperator{\Stab}{stab}
\DeclareMathOperator{\diag}{diag}
\DeclareMathOperator{\supp}{supp}
\definecolor{coloreteorema}{HTML}{E8EDFA}
{\endMakeFramed}
\newcommand{\longdashleftrightarrow}[1][1.75em]{\mathrel{
    \tikz[baseline]{\draw[dash pattern=on .25em off .1 em,<->](0,.58ex)--(#1,.58ex)}}}
\title{\textsc{\Large Secant varieties of generalised Grassmannians}}
\author{\textsc{Vincenzo Galgano}\\ 
	\textsc{\footnotesize Max-Planck Institute of Molecular Cell Biology and Genetics, Dresden, Germany}\\
	\textit{\small E-mail address}: \texttt{\small galgano@mpi-cbg.de} ; {\small ORCID: \texttt{0000-0001-8778-575X}}
} 
\date{}
\begin{document}

\maketitle

\begin{abstract}
	\noindent Secant varieties of a homogeneously embedded generalised Grassmannian $G/P$ inherit the natural group action, and one can reduce the study of their local geometric properties to $G$--orbit representatives. The case of secant varieties of lines is particularly elegant as their $G$--orbits are induced by $P$--orbits in both $G/P$ and $\mathfrak{g}/\mathfrak{p}$. Parabolic orbits are a classical problem in Representation Theory, well understood when $G/P$ is cominuscule. Exploiting them, we provide a complete and uniform description of both the identifiable and singular loci of the secant variety of lines to any cominuscule variety. We also introduce a finer version of the $2$-nd Terracini locus, called $2$-nd strong-Terracini locus, and we determine it for cominuscule varieties. Finally, we analyse the non-cominuscule case of isotropic Grassmannians for comparison, and we highlight a few differences. \\
	\hfill\break
	{\bf Keywords:} flag variety, cominuscule variety, parabolic orbit, secant variety, identifiability, decomposition locus, singular locus, Terracini locus. \\
    {\bf MSC codes (2020):} 14L30, 14M15, 14N07. 	
\end{abstract}

\tableofcontents

\paragraph*{Acknowledgement.} {\em The author is grateful to Alessandra Bernardi and Giorgio Ottaviani for their motivating support in generalising results achieved in the author's PhD thesis. Sec.\ \ref{sec:isotropicgrassmannian} is the outcome of a visit at the Institut de Mathématiques de Toulouse {\em supported by the italian “National Group for Algebraic and Geometric Structures, and their Applications” (GNSAGA-INdAM)}: special thanks are due to Laurent Manivel for his inspiring mentorship and hospitality. The author also thanks Fulvio Gesmundo, Edoardo Ballico, Luca Chiantini and Martina Lanini for their clarifying discussions.}

\section*{Introduction}

This work provides a complete and uniform description of the identifiable locus and singular locus of secant varieties of lines to any cominuscule variety, extending and unifying results obtained for Grassmannians and Spinor varieties in \cite{galganostaffolani2022grass, galgano2023spinor}. Combined with the more algebraic analysis due to Manivel--Micha\l{}ek in \cite{manivelmichaleksecants}, our results aim to give a richer understanding of the geometry of secant (and tangential) varieties to cominuscule varieties. Moreover, we introduce a finer version of the classical Terracini locus, called {\em strong-Terracini locus}. \\
\hfill\break
\indent Let $X\subset \mathbb P^M$ be an irreducible linearly non-degenerate complex projective variety. Given a point $f \in \mathbb P^M$, the {\em $X$--rank} of $f$ with respect to $X$ is the minimum number of points of $X$ whose linear span contains $f$: in particular, points of $X$ have $X$--rank equal to $1$. For any $r\in \mathbb Z_{>0}$, the {\em $r$--th secant variety} $\sigma_r(X)\subset \mathbb P^M$ is the Zariski closure of points in $\mathbb P^M$ of $X$--rank at most $r$. Such varieties are central in {\em Tensor Decomposition} \cite{landsberg, bernardi2018hitchhiker}. The versatility of tensors for parameterising data makes this topic of great impact to life sciences. \\
\indent Two crucial aspects of secant varieties are the {\em identifiability} and the {\em singularity} of their points. A point $f\in \mathbb P^M$ of $X$--rank $r$ is identifiable if there exists a unique $r$--tuple of points of $X$ on whose linear span $f$ lies. However, one can consider other types of identifiability, such as the tangential-identifiability for tangent points \cite{galganostaffolani2022grass}. Identifiability and smoothness are quite related to each other and often, but not always, one notion suggests the other. The most recent result on this interplay is due to Yoon--Seo \cite{yoonseo}, stating that second secant varieties are always smooth along their identifiable locus. This plays a key role in Sec.\ \ref{subsec:sing cominuscule} of our paper. \\
\indent The problem of studying identifiability of tensors is an active area of research, also motivated by its importance to applications: it corresponds to uniqueness in recovering data. Several techniques have been introduced for detecting identifiability, such as several apolarities \cite{LO13, arrondo21skew, schurapolarity}, but still describing the whole set of identifiabile points in a secant variety is a hard task: such set is called the {\em identifiable locus}. Even when a tensor is not identifiable, one is interested in determining its {\em decomposition locus}, corresponding to the set of all minimal-length $0$-dimensional subschemes of $X$ whose linear span contains the given tensor: for most recent results we suggest \cite{bernardi2018new, bernardisantarsiero2024}. A variety strongly related to both identifiability and singularity is the {\em Terracini locus}, introduced by \cite{ballico2021terracini}. In the recent years many literature has been devoted to the Terracini locus \cite{ballico2020terracini, galuppi2023geometry, laface2024ample, ballico2024minimal, ballico2024terracini}. \\
\indent Determining the {\em singular locus} $\Sing(\sigma_r(X))$ of a secant variety $\sigma_r(X)$ is a classical problem. In general, if $\sigma_r(X)$ is not a linear space, the singular locus $\Sing(\sigma_r(X))$ contains $\sigma_{r-1}(X)$, but only in few cases $\Sing(\sigma_r(X))$ is actually determined. The most investigated varieties have been Segre varieties \cite{michalek2015secant}, Veronese varieties \cite{kanev1999chordal, kangjin2018, kangjin2021}, Grassmannians \cite{manivelmichaleksecants, galganostaffolani2022grass} and Spinor varieties \cite{galgano2023spinor}. These are flag varieties, and their underlying Representation Theory allows a rich description of their geometry, as in \cite{LM03, LM04}. \\
\indent The case $r=2$ has been widely investigated in the literature but, despite being the smallest case, is not completely understood in general. Elegant and very important results have been obtained under certain assumptions on the embedding of the variety ($3$--very ampleness), cf. \cite{vermeire2001some, vermeire2009singularities, ullery, olano2024singularities}. However, varieties containing lines (such as minimally embedded flag varieties) do not satisfy such assumptions. \\
\hfill\break 
\indent We focus on the $2$-nd secant variety (a.k.a. {\em secant varieties of lines}) to a (projective) {\em generalised Grassmannian} minimally embedded. The latter is a flag varieties $G/P_k$ where $P_k$ is the maximal parabolic subgroup of $G$ defined by the $k$--th simple root $\alpha_k$. We consider $G/P_k$ to be minimally embedded in the (projectivized) irreducible $G$--module $\mathbb P(V^G_{\omega_k})$ defined by the $k$--th fundamental weight $\omega_k$. All this objects are recalled in the preliminary Sec.\ \ref{sec:preliminaries}. \\
\indent The secant variety of lines to $G/P_k$ is the Zariski closure of the union of all secant lines to $G/P_k$ in $\mathbb P(V_{\omega_k}^G)$ and one can distinguish two subsets: the dense subset $\sigma_2^\circ(G/P_k)$ of points lying on bisecant lines, and the tangential variety $\tau(G/P_k):=\cup_{x \in G/P_k}T_x(G/P_k)$ given by the union of all tangent lines. In this case, we consider the {\em identifiable locus} to be given by the points lying on a unique either bisecant or tangent line, or equivalently the ones lying on the linear span of a unique $0$--dimensional subscheme of $G/P_k$ of length $2$ (either reduced or non-reduced). \\
\indent Starting with Zak's key work \cite{zak1993tangents} and continuing with \cite{Kaj99, baur2007secant, landsbergweymantangential,Angelini11,  boralevi2011secants, boralevi2013note, landsbergweymansecant,  manivelmichaleksecants, russo2016, blomenhofer2023nondefectivity}, a wide literature has been devoted to secant varieties of lines (as well as higher secants) to flag varieties, and more generally to $G$--varieties: the secant varieties inherit the group action, and results are achieved by exploiting this. Our work is in this spirit. The $G$--action on the subsets $\sigma_2^\circ(G/P_k)$ and $\tau(G/P_k)$ is governed respectively by the $P_k$--action on the flag variety $G/P_k$ itself and on its tangent space $\mathfrak{g}/\mathfrak{p}_k$.\\
\indent Studying parabolic orbits in $G/P_k$ and $\mathfrak{g}/\mathfrak{p}_k$ is a classical problem in the theory of algebraic groups and their representations \cite{hillerohrle, pech2014quantum}. The generalised Grassmannians for which this problem is better understood are the {\em cominuscule varieties}, characterised by the unipotent radical $P_k^u$ of the parabolic subgroup being abelian, or equivalently by the irreducibility of the nilpotent algebra $\mathfrak{g}/\mathfrak{p}_k\simeq \mathfrak{p}_k^u$ as $\mathfrak{p}_k$--module. The parabolic orbits in a cominuscule variety $G/P_k$ are deeply studied by Richardson--R\"{o}hrle--Steinberg in \cite{richardson}. On the other hand, in the cominuscule case, the study of the parabolic orbits in the Lie algebra $\mathfrak{g}/\mathfrak{p}_k$ is simplified by the fact that the unipotent radical acts trivially (being abelian), and the only action to be considered is the one of the Levi component of $P_k$.  \\
\indent In light of the above, our starting point is the study of the $G$--orbit poset in the secant variety of lines $\sigma_2(G/P_k)$ to a cominuscule variety, and we exploit it for studying the identifiable and singular locus of $\sigma_2(G/P_k)$. We also introduce the $2$-nd {\em strong-Terracini locus} $\mathbb{T}_2^{str}(G/P_k)$: this finer version of the classical Terracini locus measures \textquotedblleft how much\textquotedblright \space a variety is defective. We collect the main results at once as follows.

\begin{main-teo}\label{main-thm}
	Let $G/P_k \subset \mathbb P(V_{\omega_k}^G)$ be a cominuscule variety minimally embedded, and let $\sigma_2(G/P_k)$ be its secant variety of lines. Let $d_k^{_G}$ be as in \autoref{table:secants to cominuscule}. Let $\Sigma_2$ be the $G$--orbit of points in $\sigma_2(G/P_k)$ lying on both a bisecant line and a tangent line.
    \begin{enumerate}
        \item (Theorem \ref{thm:poset cominuscule}) The $G$--orbit poset graph of the secant variety of lines $\sigma_2(G/P_k)$ is
        \begin{figure}[H]
	    \begin{center}
		{\small \begin{tikzpicture}[scale=1.8]
			
			\node(S) at (0,0.2){{$G/P_k$}};
			\node(t2) at (0,0.7){{$\Theta_2=\Sigma_2$}};
			\node(t3) at (-0.6,1.1){{$\Theta_3$}};
			\node(t) at (-0.6,1.6){{$\vdots$}};
			\node(td) at (-0.6,2.1){$\Theta_{d_{k}^{_G}}$};
			
			\node(s3) at (0.6,1.3){{$\Sigma_3$}};
			\node(s) at (0.6,1.8){{$\vdots$}};
			\node(sd) at (0.6,2.3){{$\Sigma_{d_{k}^{_G}}$}};
			
			\path[font=\scriptsize,>= angle 90]
			(S) edge [->] node [left] {} (t2)
			(t2) edge [->] node [left] {} (t3)
			(t3) edge [->] node [left] {} (s3)
			(t3) edge [->] node [left] {} (t)
			(t) edge [->] node [left] {} (td)
			(t) edge [->] node [left] {} (s)
			(td) edge [->] node [left] {} (sd)
			(s3) edge [->] node [left] {} (s)
			(s) edge [->] node [left] {} (sd);
			\end{tikzpicture}}
	       \end{center}
            \end{figure}
            where the orbits $\Sigma_j\subset \sigma_2^\circ(G/P_k)$ and $\Theta_j\subset\tau(G/P_k)$ are induced by the parabolic orbits in $G/P_k$ and $\mathfrak{g}/\mathfrak{p}_k$, as defined in Lemma \ref{lemma:secant orbits} and Corollary \ref{cor:tangent orbits}. 
        \item (Theorem \ref{thm:identifiability}) The unidentifiable locus in $\sigma_2(G/P_k)$ is $\Sigma_2$, and the dimensions of the decomposition loci of its points are as in \autoref{table:decomposition loci}.
        \item (Theorem \ref{thm:singular locus}) Whenever $\overline{\Sigma_2} \subsetneq \sigma_2(G/P_k) \subsetneq \mathbb P(V_{\omega}^G)$, the singular locus of $\sigma_2(G/P_k)$ is $\overline{\Sigma_2}=G/P_k \sqcup \Sigma_2$ and it has dimension as in \autoref{table:dim sing loci}.
        \item (Theorem \ref{thm:terracini locus cominuscule}) Whenever $\overline{\Sigma_2}\subsetneq \sigma_2(G/P_k)$, the $2$-nd strong-Terracini locus $\mathbb{T}_2^{str}(G/P_k)$ corresponds to $\overline{\Sigma_2}=G/P_k \sqcup \Sigma_2$. If $\overline{\Sigma_2}=\sigma_2(G/P_k)$, then $\mathbb{T}_2^{str}(G/P_k)$ is empty for $G/P_k=\Q_m$ quadric, and it corresponds to $G/P_k$ otherwise -- cf. \autoref{table:terracini loci}.
        \end{enumerate}
    \end{main-teo}

Sec.\ \ref{sec:preliminaries} contains preliminaries about all topics covered in this paper, in order to make it accessible to a wider audience. In Sec.\ \ref{sec:secant_cominuscule}, first we analyse how the parabolic orbits in a cominuscule variety $G/P_k$ and its tangent space $\mathfrak{g}/\mathfrak{p}_k$ induce orbits in the subsets $\sigma_2^\circ(G/P_k)$ and $\tau(G/P_k)$ respectively, then we study the inclusions among the orbit closures. Sec.\ \ref{sec:identif_sing} is devoted to the description of the identifiable locus and singular locus of the secant variety of lines to a cominuscule variety, as well as the $2$-nd strong-Terracini locus of a cominuscule variety. Our analysis also includes the dimensions of the decomposition loci of non-identifiable points, and of the singular locus. Finally, in Sec.\ \ref{sec:isotropicgrassmannian} we show that non-cominuscule generalised Grassmannians have a different behaviour, by studying the case of isotropic Grassmannians $\Sp_{2N}/P_k=\IG(k,2N)$ for $2 \leq k \leq N-1$. We provide a detailed description of the poset of $P_k$--orbits in the tangent space $\mathfrak{sp}_{2N}/\mathfrak{p}_k$, together with their dimensions. We also deduce the $\Sp_{2N}$--orbit poset in the tangential variety $\tau(\IG(k,2N))$, and analyse the (tangential-)identifiable locus.

\section{Preliminaries}\label{sec:preliminaries}

We work over the complex field $\mathbb C$. For any $m \in \mathbb N$, we denote $[m]:=\{1,\ldots, m\}$. 

\subsection{Miscellanea on Lie groups and Lie algebras}

Let $G$ be a simply connected semisimple complex Lie group. Fix $T \subset B \subset G$ a maximal torus and a Borel subgroup respectively. We adopt the gothic notation $\mathfrak{g}$, $\mathfrak{t}$ and $\mathfrak{b}$ for the corresponding complex Lie algebras, as well as for any other Lie algebra. Given $\Phi\subset \mathfrak{t}^\vee\setminus \{0\}$ an irreducible root system for $G$, let $\Phi^+=\Phi(B)$ be the set of positive roots with respect to $B$, and let $\Phi^-=\Phi\setminus \Phi^+$ be its complement. Set $\Delta=\{\alpha_1,\ldots, \alpha_N\}\subset \Phi^+$ to be the set of (positive) simple roots. For any $\alpha \in \Phi$, $\mathfrak{g}_\alpha$ denotes the corresponding weight space. We fix the notation $(\cdot | \cdot)$ for the scalar product on $\langle \Phi\rangle_{\mathbb R}$, and $\langle \cdot |\cdot \rangle$ for the Cartan number on roots in $\Phi$, that is $\langle \beta|\alpha\rangle =2\frac{(\beta|\alpha)}{(\alpha|\alpha)}$ for any $\alpha, \beta \in \Phi$. Let $\omega_1,\ldots, \omega_N$ be the fundamental weights spanning the weight lattice $\Lambda$. We denote the set of dominant weight by $\Lambda^+$. \\
\indent For any $\alpha \in \Phi$ consider the simple reflection $s_{\alpha}(v)=v-2\frac{(v|\alpha)}{(\alpha|\alpha)}\alpha$ in $\mathbb R^N$. Let $\cal W_G:= N_G(T)/T= \langle s_\alpha \ | \ \alpha \in \Phi \rangle=\langle s_{\alpha_i} \ | \ i=1:N \rangle$ be the Weyl group of $G$. For any $w \in \cal W_G$, we denote the length of a reduced expression of $w$ by $\ell(w)$, and the longest element in $\cal W_G$ by $w_0$.

\paragraph*{Parabolic subgroups.} Given $U_\alpha<G$ the root subgroup such that $Lie(U_\alpha)=\mathfrak{g}_\alpha$, we fix the following notation for parabolic subgroups: $P_I:=\langle B, U_{-\alpha} \ | \ \alpha \notin I \rangle$ for any $I\subset \Delta$. When $I=\{\alpha_i\}$ we write $P_i:=P_{\{\alpha_i\}}$ for the $i$--th maximal parabolic subgroup. Set $\Phi(I):=\{ \alpha \in \Phi \ | \ \langle \alpha | \omega_i \rangle \geq 0 \ \forall i \in I\}$, $\Phi(I)^0:=\{ \alpha \in \Phi \ | \ \langle \alpha | \omega_i \rangle =0 \ \forall i \in I\}$ and $\Phi(I)^+:=\Phi(I)\setminus \Phi(I)^0$. \\
\indent At the Lie algebra level, one has the {\em Levi decomposition} $\mathfrak{p}_I = \mathfrak{l}_I\oplus \mathfrak{p}_I^u$, where $\mathfrak{l}_I=\mathfrak{t}\oplus \bigoplus_{\alpha \in \Phi(I)^0} \mathfrak{g}_\alpha$ is reductive and 
\[\mathfrak{p}_I^u=\bigoplus_{\alpha \in \Phi(I)^+} \mathfrak{g}_\alpha \]
is nilpotent (with respect to the adjoint action). As a reductive Lie algebra, $\mathfrak{l}_I$ has semisimple component $\mathfrak{s}_I$ whose root system is $\Phi(I)^0$: in this respect, we say that $\Phi(I)^0$ are the roots of the parabolic algebra $\mathfrak{p}_I$. Similarly, the weight lattice of $\mathfrak{p}_I$ (strictly formally, of $\mathfrak{s}_I$) is $\Lambda_I \ := \ \sum_{i\in I}\mathbb Z\omega_i\subset \Lambda$, and we denote the dominant weight sublattice by $\Lambda^+_I$. \\
\indent Going back to the group level, the splitting $\mathfrak{p}_I=\mathfrak{l}_I\oplus \mathfrak{p}_I^u$ corresponds to the {\em Levi decomposition} $P_I=L_IP^u_I$, where $L_I$ and $P^u_I$ are respectively reductive and unipotent subgroups. Finally, the Weyl group of $P_I$ is $\cal W_{P_I}:=N_{P_I}(T)/T=\langle s_{\alpha_i} \ | \ \alpha_i \notin I\rangle$.

\paragraph*{$\mathbb Z$-gradings on Lie algebras.} Fix a simple root $\alpha_k\in \Delta$ and its corresponding maximal parabolic subgroup $P=P_k$. For any $\beta \in \Phi$ let $m_k(\beta)$ be the coefficient of $\alpha_k$ in $\beta$. Then one considers on $\mathfrak{g}$ the $\mathbb Z$-grading $\mathfrak{g}=\bigoplus_{\mathbb Z}\mathfrak{g}_i$ defined by
\[
\mathfrak{g}_0=\mathfrak{t}\oplus \bigoplus_{\beta \in \langle \Delta\setminus \{\alpha_k\}\rangle_\mathbb Z}\mathfrak{g}_\beta \ \ \ , \ \ \ \mathfrak{g}_i = \bigoplus_{m_k(\beta)=i}\mathfrak{g}_{\beta} \ . \] 
In particular, $\mathfrak{p}=\bigoplus_{i\geq 0}\mathfrak{g}_i$ and $\mathfrak{g}/\mathfrak{p}\simeq \mathfrak{g}_{-1}\oplus \mathfrak{g}_{-2}\oplus \ldots\simeq \mathfrak{p}^u$, where $\mathfrak{p}^u$ is a $P$--module after adjoint action and the last identification of $P$--modules is given by the Killing form on $\mathfrak{g}$. Notice that only the summand $\mathfrak{g}_{-1}$ is a $\mathfrak{p}$-submodule of $\mathfrak{g}/\mathfrak{p}$. Moreover, given $\rho$ the longest root in $\Phi$, one has $m_k(\beta)\leq m_k(\rho)$. Thus, $\mathfrak{g}/\mathfrak{p}$ splits in as many summands as $m_k(\rho)$. We refer to the Bourbaki tables \cite[Planche I-IX]{bourbaki} for the coefficients of the simple roots in $\rho$.

\paragraph*{Irreducible representations.} We use the terms \textquotedblleft $G$-representation" and \textquotedblleft $G$-module" indistinctly. Since in our assumptions $G$ is simply connected, every $G$--module is a $\mathfrak{g}$--module and {\em viceversa}. Given $\mathfrak{g}$ of Dynkin type $\cal D$, we denote by $V_\lambda^{\cal D}$ (or also $V_\lambda^{\mathfrak{g}}$ or $V_\lambda^{G}$) the irreducible $\mathfrak{g}$--representation having highest weight $\lambda \in \Lambda^+$, and by $v_{\lambda}\in V_\lambda^\cal D$ its (unique up to scalars) highest weight vector. We refer to an irreducible representation $V_{\omega_k}^\cal D$, for $\omega_k$ a fundamental weight, as to {\em fundamental representation}. Finally, given a subset of simple roots $I \subset \Delta$, for any $\lambda \in \Lambda_I^+$ we write $V_\lambda^{\mathfrak{p}_I}$ for the irreducible $\mathfrak{p}_I$--module obtained from the irreducible $\mathfrak{l}_I$--module $V_{\lambda}^{\mathfrak{l}_I}$ by letting the nilpotent component $\mathfrak{p}_I^u$ act trivially.

\subsection{Generalised Grassmannians}\label{sec:cominuscule}

Given $G$ a simply connected semisimple complex Lie group, let $V_{\omega_k}^G$ be the $k$--th fundamental $G$--representation with highest weight vector $v_{\omega_k}$. The (unique closed) orbit $G\cdot [v_{\omega_k}]\subset \mathbb P(V_{\omega_k}^G)$ is a projective homogeneous variety. The stabilizer of the line $[v_{\omega_k}]$ is the maximal parabolic subgroup $P_k=P_{\{\alpha_k\}}$, so that $G/P_k=G\cdot [v_{\omega_k}]$. For $G$ of Dynkin type $\cal D$, we use indistinctly the notation $G/P_k = \cal D/P_k$, and we refer to such varieties as to {\em generalised Grassmannians} (in their minimal embedding).

\paragraph*{Cominuscule varieties.} A fundamental weight $\omega_k$ is {\em cominuscule} if the longest root $\rho \in \Phi$ has coefficient $1$ on the simple root $\alpha_k$. Given $P_k$ the maximal parabolic subgroup corresponding to the fundamental weight $\omega_k$, the generalised Grassmannian $G/P_k$ is {\em cominuscule} if $\omega_k$ is so. Cominuscule (fundamental) weights (and cominuscule varieties) only appear in Dynkin types $ABCD$ and $E_6,E_7$, and they are:
\begin{itemize}
\item the {\em Grassmannian} $\Gr(k,N+1)=\SL_{N+1}/P_k$ of $k$--dimensional linear subspaces in $\mathbb C^{N+1}$;
\item the $m$--dimensional {\em quadric} $\Q_m=G/P_1\subset \mathbb P^{m+1}$ for $G$ of either $B$-- or $D$--type;
\item the {\em Lagrangian Grassmannian} $\LG_{N}:=\LG(N,2N)=\Sp_{2N}/P_N$ of maximal $\boldsymbol{\omega}$--isotropic linear subspaces of $\mathbb C^{2N}$ for a given non-degenerate symplectic form $\boldsymbol{\omega}\in \bigwedge^2(\mathbb C^{2N})^\vee$;
\item the {\em Spinor varieties} $\mathbb S_N^\pm = \Spin_{2N}/P_{N-1},\Spin_{2N}/P_N$ of maximal $\bold{q}$--isotropic linear subspaces of $\mathbb C^{2N}$ for a given non-degenerate quadratic form $\bold{q}\in \Sym^2(\mathbb C^{2N})^\vee$;
\item the $16$--dimensional {\em Cayley plane} $\OP^2=E_6/P_1\simeq E_6/P_6$;
\item the $27$--dimensional variety $E_7/P_7$.
\end{itemize}
\indent From the theory of $\mathbb Z$--gradings on $\mathfrak{g}$, the weight $\omega_k$ being cominuscule is equivalent to $\mathfrak{g}$ having $\mathbb Z$--grading with respect to $P_k$ of the form $\mathfrak{g}=\mathfrak{g}_{-1}\oplus \mathfrak{g}_0 \oplus \mathfrak{g}_1$, which again is equivalent to the tangent space $\mathfrak{g}/\mathfrak{p}_k\simeq \mathfrak{g}_{-1}$ being an irreducible $\mathfrak{p}_k$-module (and the tangent bundle $\cal T(G/P_k)$ an irreducible $P_k$--homogeneous bundle as well). Moreover, the above conditions are equivalent to the unipotent radical $P^u_k$ being abelian \cite[Lemma 2.2]{richardson}.

\begin{obs*}
	The Cayley plane $\OP^2$ is a Severi variety, while the varieties $\Gr(3,6)$, $\LG_3$, $\mathbb S_{6}^\pm$ and $E_7/P_7$ are {\em Legendrian varieties}. They all fit in the second and third row of the {\em Freudenthal's magic square}, whose geometry has been studied in detail by F. Zak in \cite[Sec.\ III.2.5]{zak1993tangents}, and by J.M. Landsberg and L. Manivel in \cite{LM01, LM07}.
 \end{obs*}

\subsection{Secant varieties, identifiability and Terracini loci}\label{subsec:secant}

Given $X \subset \mathbb P^M$ an irreducible non-degenerate projective variety, the $X$-{\it rank} of a point $p \in \mathbb P^M$, denoted by $r_X(p)$, is the minimum number of points of $X$ whose span contains $p$. The {\it $r$-th secant variety $\sigma_r(X)$ of $X$ in $\mathbb P^M$} is defined as the Zariski closure of the set $\sigma_r^\circ(X) := \{p \in \mathbb P^M: r_X(p) \leq r\} \subset \mathbb P^M$ of points of rank at most $r$, i.e.
\[\sigma_r(X) := \overline{\sigma_r^\circ(X)}=\overline{\{p \in \mathbb P^M: r_X(p) \leq r\}} \subset \mathbb P^M \ . \]
The {\it border $X$-rank} of a point $p \in \mathbb P^M$, denoted by $br_X(p)$, is the minimum integer $r$ such that $p \in \sigma_r (X)$. Although computing the dimension of secant varieties is a hard problem in general, the following inequality always holds:
\[\dim \sigma_r (X) \leq \expdim \sigma_r (X):=\min \{r(\dim X+1)-1, M\} \ , \]
where the right-hand side is called {\it expected dimension} of the secant variety and is a straightforward consequence of the celebrated Terracini's Lemma. If $\dim\sigma_r(X)\lneq \expdim \sigma_r(X)$, then the secant variety $\sigma_r(X)$ is said to be {\em defective}, and the value $\delta_r := \expdim \sigma_r(X)-\dim \sigma_r(X)$ is called {\em defect}. We say that $\sigma_r(X)$ {\em overfills} the ambient space if $\dim \sigma_r(X)=M \lneq r(\dim X+1)-1$, while it {\em perfectly fills} the ambient space if $\dim \sigma_r(X)=M=r(\dim X+1)-1$.

\paragraph*{Secant variety of lines.} For $r=2$, the $2$-nd secant variety $\sigma_2(X)\subset \mathbb P^M$ is also called {\em secant variety of lines} to $X$ in $\mathbb P^M$, being the union of all projective lines intersecting $X$ in two points (counted with multiplicity). More precisely, it consists of two (non necessarily disjoint) subsets: the dense subset $\sigma_2^\circ(X)=\bigcup_{x,y\in X}\langle x, y \rangle$ is the union of all bisecant lines to $X$; the tangential variety $\tau(X):=\bigcup_{x\in X}T_xX$ is the union of all tangent lines to $X$. Notice that both subsets contain $X$ itself. Moreover, from Zak's key result \cite[Theorem 1.4]{zak1993tangents} it holds that either $\dim\tau(X) = 2\dim X$ {\em and} $\dim \sigma_2(X)=2\dim X+1$, or $\tau(X)=\sigma_2(X)$. 

\paragraph*{Abstract secant variety.} Let $\Hilb_2(X)$ be the Hilbert scheme of $2$ points of $X$. The {\em $2$-nd abstract secant variety} of $X$ is the incidence variety
\begin{equation}\label{def:smooth abstract secant}
\widetilde{Ab\sigma_2}(X):=\left\{(\cal Z,p)\in \Hilb_2(X)\times \mathbb P^M \ | \ p \in \langle \cal Z \rangle\right\} 
\end{equation}
endowed with the two natural projections $\widetilde{\pi_H}$ and $\widetilde{\pi_\sigma}$ onto $\Hilb_2(X)$ and $\mathbb P^M$ respectively. The image of $\widetilde{\pi_\sigma}$ is the secant variety of lines. Under the above definition, $\widetilde{Ab\sigma_2}(X)$ is smooth. An alternative, more classical, definition replaces $\Hilb_2(X)$ by the symmetric square $S^2X := (X\times X)/\mathfrak{S}_2$, and we denote it $Ab\sigma_2(X)$: this has singularities along the diagonal $\Delta_X$, and the image of the projection $\pi_\sigma: Ab\sigma_2(X)\rightarrow \mathbb P^M$ is the dense subset $\sigma_2^\circ(X)$. We will use both definitions (cf. proof of Proposition \ref{prop:singular LG} and Sec.\ \ref{subsec:terracini locus}). Note that in both definitions the $2$-nd abstract secant variety has dimension $2\dim X+1$.

\paragraph*{Identifiability.} A point $p \in \sigma_2(X)\setminus X $ is {\em identifiable} if it lies on a unique bisecant line to $X$, that is the fiber $\widetilde{\pi_\sigma}^{-1}(p)\subset \widetilde{Ab\sigma_2}(X)$ is a singleton corresponding to a unique reduced point of $\Hilb_2(X)$. A point $q \in \tau(X)\setminus X$ is {\em tangential-identifiable} if it lies on a unique tangent space to $X$, that is the fiber $\widetilde{\pi_\sigma}^{-1}(q)\subset \widetilde{Ab\sigma_2}(X)$ is a singleton corresponding to a unique non-reduced point of $\Hilb_2(X)$. We call {\em identifiable locus} of $\sigma_2(X)$ the subset of all identifiable and tangential-identifiable points. The {\em decomposition locus} of a point $p \in \sigma_2^\circ(X)\setminus X$ is the set $\widetilde{\pi_H} \circ \widetilde{\pi_\sigma}^{-1}(p)$ of all points in $\Hilb_2(X)$ whose linear span contains $p$.

\begin{obs}\label{rmk:defectivity-unidentifiability}
    If $\sigma_2(X)\subset \mathbb P^M$ is either defective or overfills the ambient space, then all points in $\sigma_2(X)\setminus X$ are unidentifiable, and the dimension of the decomposition locus of a generic point of $\sigma_2(X)$ coincides with the value $2\dim X + 1 - \dim \sigma_2(X)$. On the other end, if $\dim \sigma_2(X)= 2\dim X +1$ and a generic point is identifiable, then $\sigma_2(X)$ is said {\em generically identifiable}.
    \end{obs}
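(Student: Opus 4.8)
The plan is to read off the entire statement from elementary dimension theory applied to the projection $\pi_\sigma\colon Ab\sigma_2(X)\to \sigma_2(X)$. Recall that the Hilbert--scheme model $Ab\sigma_2(X)$ is a $\mathbb P^1$--bundle over $\Hilb_2(X)$ (the fibre over $\cal Z$ being the line $\langle\cal Z\rangle$), hence smooth and irreducible of dimension $2\dim X+1$; being a closed subvariety of $\Hilb_2(X)\times\mathbb P^M$ it is projective, so $\pi_\sigma$ is proper and its image is the closed irreducible set $\sigma_2(X)$, i.e.\ $\pi_\sigma$ is surjective. The one piece of bookkeeping is that the $\mathbb P^M$--coordinate is constant along each fibre $\pi_\sigma^{-1}(p)$, so the projection $\pi_H$ restricts to a bijective morphism of $\pi_\sigma^{-1}(p)$ onto the decomposition locus of $p$; in particular the two have the same dimension, and a point $p\in\sigma_2(X)\setminus X$ is identifiable or tangential-identifiable exactly when $\pi_\sigma^{-1}(p)$ is a single point.

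First I would invoke the theorem on fibre dimensions for the surjective morphism $\pi_\sigma$ of irreducible varieties: every component of every fibre satisfies $\dim\pi_\sigma^{-1}(p)\geq \dim Ab\sigma_2(X)-\dim\sigma_2(X)=2\dim X+1-\dim\sigma_2(X)$, with equality on a dense open $U\subseteq\sigma_2(X)$. Since $\sigma_2^\circ(X)$ is dense in $\sigma_2(X)$ by definition and --- barring the trivial case $X=\mathbb P^M$ --- the closed set $X$ is a proper subset of $\sigma_2(X)$, the generic point of $\sigma_2(X)$ lies in $U\cap(\sigma_2^\circ(X)\setminus X)$; by the first paragraph its decomposition locus then has dimension exactly $2\dim X+1-\dim\sigma_2(X)$, the value asserted in the remark.

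Now suppose $\sigma_2(X)$ is defective or overfills $\mathbb P^M$. If it is defective then $\dim\sigma_2(X)<\expdim\sigma_2(X)=\min\{2\dim X+1,\,M\}\leq 2\dim X+1$; if it overfills then $\dim\sigma_2(X)=M<2\dim X+1$ by definition. Either way $2\dim X+1-\dim\sigma_2(X)>0$, so the lower bound above forces $\dim\pi_\sigma^{-1}(p)>0$ for \emph{every} $p\in\sigma_2(X)$. For $p\in\sigma_2(X)\setminus X$ this says that the decomposition locus of $p$ is positive--dimensional, hence infinite, hence not a single point, so $p$ is neither identifiable nor tangential-identifiable --- i.e.\ unidentifiable. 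This is the first assertion.

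Finally, if $\dim\sigma_2(X)=2\dim X+1$ the generic fibre of the proper morphism $\pi_\sigma$ is $0$--dimensional and therefore finite, so the generic point of $\sigma_2(X)$ has a finite decomposition locus, consistent with the value $2\dim X+1-\dim\sigma_2(X)=0$; one then calls $\sigma_2(X)$ \emph{generically identifiable}. The only point here that is not a formal consequence of the dimension count --- and the genuine obstacle, if one wanted to establish genuine generic identifiability --- is the passage from ``the generic fibre is finite'' to ``the generic fibre is a single point'': this can fail for non-degenerate $X$ of expected secant dimension (e.g.\ a general rational quartic curve in $\mathbb P^3$), and in the cominuscule setting of the paper it is exactly what is proved orbit by orbit in Sec.\ \ref{sec:identif_sing} (Theorem \ref{thm:identifiability}).
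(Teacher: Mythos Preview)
The paper states this remark without proof --- it is offered as a standard observation --- so there is no argument in the paper to compare against. Your proof via the fibre-dimension theorem applied to $\pi_\sigma\colon Ab\sigma_2(X)\to\sigma_2(X)$ is correct and is exactly the argument one would expect; the identification of $\pi_\sigma^{-1}(p)$ with the decomposition locus via $\pi_H$ is clean and justifies both the dimension formula and the unidentifiability claim.

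Your closing caveat is well taken and worth emphasising: the equality $\dim\sigma_2(X)=2\dim X+1$ only yields that the generic fibre of $\pi_\sigma$ is \emph{finite}, not that it is a single point, so the phrase ``as the generic point is identifiable'' in the remark is, strictly speaking, a statement that requires further justification (or is being used as a naming convention). Your example of a non-normal curve in $\mathbb P^3$ is a legitimate obstruction in general. In the paper's intended applications this gap is closed case by case in Sec.~\ref{sec:identif_sing}, exactly as you say.
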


\paragraph*{Terracini locus.} 

The {\em $2$-nd Terracini locus}, classically introduced in \cite{ballico2021terracini}, is the locus of points in $S^2 X \setminus \Delta_X$ for which the differential of the projection $\pi_\sigma: Ab\sigma_2(X) \rightarrow \sigma_2^\circ(X)$ drops rank:
\[ \mathbb{T}_2(X):= \overline{\{ (x,y)\in S^2X\setminus \Delta_X \ | \ \dim \langle T_xX,T_yX \rangle \lneq 2\dim X + 1 \}} \ . \]

\noindent The above definition can be reinterpreted in terms of the projection $\widetilde{\pi_\sigma}: \widetilde{Ab\sigma_2}(X)\rightarrow \sigma_2(X)$, bu just replacing $S^2X\setminus \Delta_X$ with the locus $\Hilb_2^{red}(X)$ of reduced $0$--dimensional subschemes of $X$ of length $2$:

 \[ \widetilde{\mathbb{T}_2}(X) := \overline{\{ \{x,y\}\in \Hilb_2^{red}(X) \ | \ \dim \langle T_xX,T_yX \rangle \lneq 2\dim X + 1 \}} \ . \]

\begin{obs}\label{rmk:classical Terracini locus and Terracini lemma}
The (strict) upper-bound in the above definition is the dimension of the $2$-nd abstract secant variety. By Terracini's Lemma, for generic $(x,y)\in S^2X\setminus \Delta_X$ it holds $\dim \langle T_xX,T_yX\rangle =\dim\sigma_2(X)$. Then, if $\sigma_2(X)$ is not generically identifiable, one gets $\mathbb{T}_2(X)=S^2X\setminus \Delta_X$. This motivates us to introduce a finer version of the Terracini locus in Sec.\ \ref{subsec:terracini locus}.
\end{obs}

\begin{obs}\label{rmk:terracini is non-identifiable with positive fiber}
Terracini locus, identifiability and singularity are quite related. Non-identifiable points in $\sigma_2^\circ(X)$ with positive dimensional fiber along $\widetilde{\pi_\sigma}$ correspond to points in $\widetilde{\mathbb{T}_2}(X)$: cf. \cite[Prop.\ 6.2]{ballico2021terracini}. On the other hand, if $q \in \sigma_2(X)$ is smooth and with $0$--dimensional fiber along $\widetilde{\pi_\sigma}$, then Zariski's Main Theorem yields $\widetilde{\pi_H}\circ \widetilde{\pi_\sigma}^{-1}(q) \subset \Hilb_2(X)\setminus \widetilde{\mathbb{T}_2}(X)$. In particular, if $q \in \sigma_2(X)$ has finite decomposition locus lying in $\widetilde{\mathbb{T}_2}(X)$, then $q$ is singular.
\end{obs}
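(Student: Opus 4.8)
The plan is to treat the Remark's three assertions in turn: the first is a citation, the third is the formal contrapositive of the second, and the substance lies in the second, where Zariski's Main Theorem does the work. The elementary fact used repeatedly is that, for a reduced $\cal Z=\{x,y\}\in\Hilb_2^{red}(X)$ and $q\in\langle x,y\rangle\setminus X$, the rank of $d_{(\cal Z,q)}\pi_\sigma$ equals $\dim\langle T_xX,T_yX\rangle$ -- the standard computation of tangent vectors to $Ab\sigma_2(X)$ underlying Terracini's Lemma -- so that $\cal Z$ lies in the locus $\{\dim\langle T_xX,T_yX\rangle<\dim\sigma_2(X)\}$ defining $\Terr_2(X)$ precisely when the rank of $d\pi_\sigma$ there drops below the generic value $\dim\sigma_2(X)$.

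\emph{First assertion.} Here I would simply invoke \cite[Prop.\ 6.2]{ballico2021terracini}; the mechanism behind it is transparent whenever $\sigma_2(X)$ is generically identifiable, which by Zak's dichotomy holds unless $\tau(X)=\sigma_2(X)$. In that case $\dim Ab\sigma_2(X)=2\dim X+1=\dim\sigma_2(X)$; moreover the $\mathbb P^M$--coordinate is constant along $\pi_\sigma^{-1}(p)$, so $\pi_H$ is injective there, and a positive--dimensional fibre yields a positive--dimensional family $\{\cal Z_t\}\subset\Hilb_2(X)$ of decompositions of $p$. The differential of $\pi_\sigma$ at a general reduced member $\cal Z=\{x,y\}$ thus has a nonzero kernel (tangent to the family), hence non-maximal rank, so $\dim\langle T_xX,T_yX\rangle<\dim\sigma_2(X)$, i.e.\ $\cal Z\in\Terr_2(X)$.

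\emph{Second assertion.} Let $q\in\sigma_2(X)$ be smooth with $\dim\pi_\sigma^{-1}(q)=0$. I would first note that this already forces generic identifiability: $Ab\sigma_2(X)$ is smooth (hence normal) and irreducible of dimension $2\dim X+1$ (a $\mathbb P^1$--bundle over $\Hilb_2(X)$), and $\pi_\sigma$ is proper since $\Hilb_2(X)$ is, so were $\dim\sigma_2(X)<2\dim X+1$ every fibre of $\pi_\sigma$ would be positive--dimensional, contradicting the hypothesis on $q$; hence $\dim\sigma_2(X)=2\dim X+1$ and, by Remark~\ref{rmk:defectivity-unidentifiability}, $\pi_\sigma$ is a proper birational morphism onto $\sigma_2(X)$, which is normal at $q$. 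Now I would apply Zariski's Main Theorem in the form: such a $\pi_\sigma$ restricts to an isomorphism over the open set $\sigma_2(X)\setminus T$, where $T$ is the closed locus of points of positive--dimensional fibre. Since $q\notin T$ there is an open $W\ni q$ with $\pi_\sigma$ restricting to an isomorphism $\pi_\sigma^{-1}(W)\to W$; in particular $d\pi_\sigma$ is an isomorphism, of rank $\dim\sigma_2(X)$, at every point of $\pi_\sigma^{-1}(W)$. To deduce $\pi_H\pi_\sigma^{-1}(q)\subseteq\Hilb_2(X)\setminus\Terr_2(X)$ it suffices to handle the reduced members (the non--reduced ones lie outside $\Terr_2(X)\subseteq\Hilb_2^{red}(X)$ for free): fix a reduced $\cal Z=\{x,y\}\in\pi_H\pi_\sigma^{-1}(q)$ and let $\cal Z'=\{x',y'\}\in\Hilb_2^{red}(X)$ range near $\cal Z$; its secant line $\langle\cal Z'\rangle\subset\sigma_2(X)$ is close to $\langle\cal Z\rangle\ni q$, hence meets $W$, so for a suitable $q'\in\langle\cal Z'\rangle\cap W$ (with $q'\notin X$, after shrinking $W$ away from $X$) the point $(\cal Z',q')$ lies in $\pi_\sigma^{-1}(W)$; by the elementary fact above, $\dim\langle T_{x'}X,T_{y'}X\rangle$ equals the rank of the isomorphism $d_{(\cal Z',q')}\pi_\sigma$, namely $\dim\sigma_2(X)$. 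Thus $\cal Z$ has a neighbourhood in $\Hilb_2^{red}(X)$ disjoint from $\{\dim\langle T_{x'}X,T_{y'}X\rangle<\dim\sigma_2(X)\}$, hence disjoint from its closure $\Terr_2(X)$, so $\cal Z\notin\Terr_2(X)$.

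\emph{Third assertion.} This is then immediate: if $q\in\sigma_2(X)$ has finite decomposition locus $\cal D:=\pi_H\pi_\sigma^{-1}(q)$ contained in $\Terr_2(X)$, then $\pi_\sigma^{-1}(q)$ is finite ($\pi_H$ being injective on it) and non--empty ($\pi_\sigma$ surjecting onto $\sigma_2(X)$), so were $q$ smooth the second assertion would give $\cal D=\cal D\cap\Terr_2(X)=\emptyset$, a contradiction; hence $q$ is singular. The only genuinely delicate point, I expect, is the local invocation of Zariski's Main Theorem together with the passage from ``$d\pi_\sigma$ has maximal rank on a neighbourhood of $\pi_\sigma^{-1}(q)$'' to ``$\cal Z$ avoids the \emph{closure} defining $\Terr_2(X)$'': one needs both the normality of $\sigma_2(X)$ at $q$ (to run the theorem) and the openness of the maximal--rank condition (to beat that closure), whereas the semicontinuity reduction to the generically identifiable case and the tangent--vector computation are routine.
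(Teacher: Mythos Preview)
Your argument is correct and follows exactly the route the paper indicates: the Remark carries no separate proof beyond the citation to \cite{ballico2021terracini} for the first claim and the appeal to Zariski's Main Theorem for the second, with the third read off as the contrapositive. Your expansion of the ZMT step---restricting to a smooth (hence normal) neighbourhood of $q$, using properness and birationality of $\pi_\sigma$ to get a local isomorphism, and then translating maximal rank of $d\pi_\sigma$ into the span condition via the Terracini computation---is the intended mechanism.

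One small slip: you dispose of non-reduced members of $\pi_H\pi_\sigma^{-1}(q)$ by asserting $\Terr_2(X)\subseteq\Hilb_2^{red}(X)$, but $\Terr_2(X)$ is a closure taken in $\Hilb_2(X)$ and can contain non-reduced schemes. This is harmless for your proof, since your neighbourhood argument applies verbatim to a non-reduced $\cal Z$ as well: one still has $(\cal Z,q)\in\pi_\sigma^{-1}(W)$, nearby \emph{reduced} $\cal Z'=\{x',y'\}$ have secant line meeting $W$, and the rank computation gives $\dim\langle T_{x'}X,T_{y'}X\rangle=\dim\sigma_2(X)$, so $\cal Z$ has a neighbourhood in $\Hilb_2(X)$ whose reduced part misses the defining locus, hence $\cal Z\notin\Terr_2(X)$.
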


\subsection{Hamming distance on projective varieties}\label{subsec:hamming}

Given $X\subset \mathbb P^M$ an irreducible projective variety, the {\em Hamming distance} $d(p,q)$ between two points $p,q \in X$ is the minimum number of lines in $\mathbb P^M$ which are fully contained in $X$ and connect $p$ to $q$, that is $d(p,q) := \min \left\{ r \ | \ \exists p_1,\dots,p_{r-1} \in X \ \text{s.t.} \ L(p_i,p_{i+1})\subset X, \ i = 0:r-1 \right\}$ where $p_0=p$ and $p_r=q$ and $L(p_i,p_{i+1})$ is the line passing through $p_i$ and $p_{i+1}$. In particular, $d(p,q)=0$ if and only if $p=q$, while $d(p,q)=1$ if and only if the line $L(p,q)$ fully lies in $X$ and $p\neq q$. The {\em diameter} of $X$ is the maximum Hamming distance possible between its points.

\begin{obs}\label{rmk:hamming for G-varieties}
	Given $G$ a group, $W$ a $G$--representation and $X\subset \mathbb P(W)$ a variety which is invariant under the $G$-action induced on $\mathbb P(W)$, then the $G$-action preserves the Hamming distance between points in $X$, that is $d(g\cdot p, g\cdot q)=d(p,q)$.
	\end{obs}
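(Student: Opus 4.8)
The plan is to unwind the definition of the Hamming distance and check that it is manifestly $G$-equivariant. First I would recall that $g \in G$ acts on $\mathbb P(W)$ via a linear automorphism of $W$, hence by a projective linear automorphism of $\mathbb P(W)$; in particular $g$ sends projective lines to projective lines bijectively. Moreover, since $X$ is $G$-invariant, $g$ restricts to an automorphism of $X$, with inverse the restriction of $g^{-1}$.

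The key observation is the following: a line $L \subset \mathbb P(W)$ satisfies $L \subset X$ if and only if $g \cdot L \subset X$. Indeed, if $L \subset X$ then $g \cdot L \subset g \cdot X = X$; the converse follows by applying $g^{-1}$. Using this, I would show that a chain $p = p_0, p_1, \dots, p_r = q$ in $X$ with $L(p_i, p_{i+1}) \subset X$ for all $i$ is sent by $g$ to the chain $g\cdot p = g\cdot p_0, \dots, g\cdot p_r = g\cdot q$ in $X$, and that $L(g \cdot p_i, g \cdot p_{i+1}) = g \cdot L(p_i, p_{i+1}) \subset X$ because $g$ preserves lines and preserves containment in $X$. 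Hence every admissible chain connecting $p$ to $q$ of length $r$ produces an admissible chain connecting $g \cdot p$ to $g \cdot q$ of the same length $r$, which gives $d(g \cdot p, g \cdot q) \leq d(p, q)$.

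For the reverse inequality, I would apply the same argument to the element $g^{-1}$ and the points $g \cdot p$, $g \cdot q$, obtaining $d(p, q) = d(g^{-1} \cdot (g \cdot p), g^{-1} \cdot (g \cdot q)) \leq d(g \cdot p, g \cdot q)$. Combining the two inequalities yields $d(g \cdot p, g \cdot q) = d(p, q)$, as claimed.

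There is essentially no obstacle here: the statement is a formal consequence of the fact that a projective linear automorphism preserving $X$ also preserves the class of lines contained in $X$, and the argument is symmetric in $g$ and $g^{-1}$. The only point requiring a word of care is that $g$ acts on lines bijectively (so that chains are sent to chains of exactly the same length, not merely at most the same length), which is immediate since $g$ acts as an element of $\mathrm{PGL}(W)$.
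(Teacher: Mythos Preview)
Your argument is correct and is exactly the natural one: $g$ acts as an element of $\mathrm{PGL}(W)$ preserving $X$, hence it carries admissible chains to admissible chains of the same length, giving both inequalities by symmetry in $g$ and $g^{-1}$. The paper states this remark without proof, so there is nothing to compare; your write-up supplies precisely the routine verification the author left to the reader.
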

 
\begin{obs*} 
    For cominuscule varieties the Hamming distance coincides with the minimum possible degree of rational curves passing through the two points \cite[Lemma 4.2]{buch2013}. For non-cominuscule varieties this is not true in general. Moreover, for generalised Grassmannians of classical type $ABCD$, this notion of Hamming distance corresponds to the notion of {\em subspace distance} in Coding Theory.
    \end{obs*}

\subsection{Secants of lines to generalised Grassmannians}
\label{subsec:sec2 for generalised}

In the previous notation, let $G/P_k=G\cdot [v_{\omega_k}]\subset \mathbb P(V_{\omega_k}^G)$ be a projective generalised Grassmannian in its minimal embedding. Our aim is to study the geometry of $\sigma_2(G/P_k)\subset \mathbb P(V_{\omega_k}^G)$. \\
\indent The $G$--action on $V_{\omega_k}^{G}$ preserves the $G/P_k$--rank of points in $\mathbb P(V_{\omega_k}^G)$, hence it leaves $\sigma_2(G/P_k)$ invariant. Moreover, the dense subset $\sigma_2^\circ(G/P_k)$ (of points lying on bisecant lines) and the tangential variety $\tau(G/P_k)$ (of tangent points) are $G$--invariant as well. Thus each one of these objects is union of $G$--orbits. In the following, we give a first partial description of the $G$--orbits in $\sigma_2^\circ(G/P_k)$ and $\tau(G/P_k)$ separately.

\paragraph*{$G$--orbits in $\sigma_2^\circ(G/P_k)$.} Points in $\sigma_2^\circ(G/P_k)$ are of the form $[p+q]\in \mathbb P(V_{\omega_k}^G)$, for $p,q$ in the affine cone $\widehat{G/P_k}$. Thus the $G$--orbits in the dense subset $\sigma_2^\circ(G/P_k)$ are determined by the $G$--orbits in $(G/P_k)\times (G/P_k)$, on which $G$ acts diagonally:
\[ g \cdot ([x],[y]) = (g\cdot [x],g\cdot [y]) \ \ \ \ \ , \ \forall g \in G, \ \forall [x],[y]\in G/P_k \ . \]

\begin{obs}\label{rmk:collapsing in secant branch}
    Despite $G$--orbits in $\sigma_2^\circ(G/P_k)$ are determined by the $G$--orbits in $(G/P_k)\times (G/P_k)$, they are not {\em a priori} in bijection. For instance, for any pair $([p],[q])$ of points having Hamming distance $1$, the point $[p+q]$ lies in the minimal orbit $G/P_k$ but the pair $([p],[q])$ does not lies in the diagonal orbit of $(G/P_k)\times (G/P_k)$. 
    \end{obs}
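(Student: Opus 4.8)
The plan is to unwind the definition of Hamming distance one and then read off both assertions directly, the second one furnishing the promised failure of injectivity. First, recall from Section \ref{subsec:hamming} that $d([p],[q])=1$ means exactly that $[p]\neq[q]$ and the projective line $L([p],[q])$ lies entirely in $G/P_k$. In particular the affine representatives $p,q\in\widehat{G/P_k}$ are linearly independent, so $p+q\neq 0$ and $[p+q]$ is a well-defined point of $\mathbb{P}(V_{\omega_k}^G)$ sitting on $L([p],[q])\subset G/P_k$. Since $G/P_k=G\cdot[v_{\omega_k}]$ is the unique closed $G$-orbit in $\mathbb{P}(V_{\omega_k}^G)$ --- equivalently, the set of points of $G/P_k$-rank $1$, i.e.\ the minimal orbit in the poset --- this gives at once $[p+q]\in G/P_k$.

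For the second assertion I would use that $G$ acts transitively on $G/P_k$, so that the diagonal $\Delta_{G/P_k}=\{([x],[x]):[x]\in G/P_k\}$ is a single $G$-orbit in $(G/P_k)\times(G/P_k)$, and that $([p],[q])\notin\Delta_{G/P_k}$ because $[p]\neq[q]$. Hence $([p],[q])$ lies in a $G$-orbit of $(G/P_k)^2$ different from the diagonal orbit. Now the diagonal orbit is sent onto $G/P_k$ by the assignment $([x],[y])\mapsto[x+y]$ (as $[x+x]=[x]$), while by the first assertion every Hamming-distance-one pair is mapped into $G/P_k$ as well; such pairs exist because each generalised Grassmannian in its minimal embedding is covered by lines. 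Therefore at least two distinct $G$-orbits of $(G/P_k)^2$ --- the diagonal orbit and any orbit of distance-one pairs --- are sent into the single minimal orbit $G/P_k\subset\sigma_2^\circ(G/P_k)$, so the induced map on orbit sets is not injective, which is the content of the remark.

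The statement is almost immediate once the definitions are unwound; the only point genuinely requiring care, and the one I would spell out, is that $([x],[y])\mapsto[x+y]$ is not literally well defined on $(G/P_k)^2$: if $g\cdot[p]=[p']$ and $g\cdot[q]=[q']$ then $g\cdot p=\mu p'$ and $g\cdot q=\nu q'$ for scalars $\mu,\nu$, and $\mu p'+\nu q'$ need not be proportional to $p'+q'$. This is precisely why the preceding text says the $G$-orbits in $\sigma_2^\circ(G/P_k)$ are only ``determined by'' those in $(G/P_k)^2$: the correct level at which to work is the cone $\widehat{G/P_k}\times\widehat{G/P_k}$ (or the abstract secant variety built on $S^2(G/P_k)$ together with its line structure), where the assignment is genuinely $G$-equivariant and surjective onto $\sigma_2^\circ(G/P_k)$. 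Both the diagonal and the distance-one locus remain $G$-invariant and land in $G/P_k$ at that level, so the example is unaffected; I would insert one sentence making this precise before invoking the orbit correspondence.
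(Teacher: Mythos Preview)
Your justification is correct and matches the reasoning implicit in the paper. Note that in the paper this is a \emph{Remark} stated without proof: the author treats the observation as self-evident once the definition of Hamming distance is recalled, so your write-up is already more detailed than what the paper provides. Your final paragraph on the ill-definedness of $([x],[y])\mapsto[x+y]$ is a worthwhile clarification, though for the purpose of the remark it suffices that the $G$-orbit of $[p+q]$ in $\sigma_2^\circ(G/P_k)$ depends only on the $G$-orbit of the unordered pair $\{[p],[q]\}$, which is all the paper uses.
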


By $G$--homogeneity of $G/P_k$, any pair $([x],[y])\in (G/P_k)\times (G/P_k)$ is $G$--conjugated to a pair of the form $([v_{\omega_k}],[y'])$, whose first entry is left fixed by the action of $P_k$. Since $P_k$ moves the second entry only, the $G$--orbits in $(G/P_k)\times (G/P_k)$ are in bijection with the $P_k$--orbits in $G/P_k$: in particular, we get the correspondence between the cosets
\[ \frac{\sigma_2^\circ(G/P_k)}{G} \ \longdashleftrightarrow \ \frac{(G/P_k)\times (G/P_k)}{G} \ \stackrel{1:1}{\longleftrightarrow} \ \frac{G/P_k}{P_k}=P_k\!\setminus \!G/P_k \ , \]
where the first correspondence is dashed in light of Remark \ref{rmk:collapsing in secant branch}. From the Bruhat decomposition $G=\bigsqcup_{w \in \cal W_G} BwB$, one recovers the Bruhat decomposition $G/P_k=\bigsqcup_{w \in \cal W_G/\cal W_{P_k}} BwP_k$, so that 
\[ P_k\!\setminus \!G/P_k = \bigsqcup_{w \in \cal W_{P_k}\!\setminus \!\cal W_G/\cal W_{P_k}}P_kwP_k \ . \]

\begin{obs}\label{rmk:secant orbit finitely many}
	The $P_k$--orbits in $G/P_k$ (hence the $G$--orbits in $\sigma_2^\circ(G/P_k)$) are always finitely many, as parameterised by representatives of the Weyl group.
\end{obs}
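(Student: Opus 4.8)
The plan is to read off finiteness directly from the Bruhat decomposition recalled just above. First I would invoke that $G$ is semisimple, so its Weyl group $\cal W_G = N_G(T)/T$ is a finite Coxeter group; in particular the parabolic subgroup $\cal W_{P_k} = \langle s_{\alpha_i} \mid \alpha_i \neq \alpha_k\rangle$ is finite and the set of double cosets $\cal W_{P_k}\backslash \cal W_G / \cal W_{P_k}$ is finite. Next, starting from $G = \bigsqcup_{w\in \cal W_G} BwB$ and using $B\subset P_k$ together with $P_k = \bigsqcup_{u\in \cal W_{P_k}} BuB$, one checks from the $BN$--pair axioms that every $P_k$--double coset is a (finite) union of $B$--double cosets, precisely $P_k w P_k = \bigsqcup_{w'\in \cal W_{P_k} w \cal W_{P_k}} Bw'B$. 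This yields a bijection $P_k\backslash G/P_k \longleftrightarrow \cal W_{P_k}\backslash \cal W_G / \cal W_{P_k}$, which is exactly the displayed decomposition preceding the statement; being a quotient of a subset of a finite Coxeter group, the right-hand side is finite.

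Then I would translate double cosets into orbits: the map $P_k g P_k \mapsto P_k\cdot (gP_k)$ is a bijection between $P_k\backslash G/P_k$ and the set of $P_k$--orbits on $G/P_k$, so the latter is finite, and each orbit meets the finite set of Schubert points $\{wP_k\}$ with $w$ ranging over a set of double-coset representatives in $\cal W_G$. This proves the first assertion.

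For the parenthetical claim, recall from the discussion above that the $G$--orbits in $(G/P_k)\times(G/P_k)$ are in bijection with the $P_k$--orbits in $G/P_k$, hence finite; and the $G$--orbits in $\sigma_2^\circ(G/P_k)$ are the image of this finite set under the possibly non-injective assignment $([x],[y])\mapsto [x+y]$ of Remark \ref{rmk:collapsing in secant branch}. A surjective image of a finite set is finite, so $\sigma_2^\circ(G/P_k)$ has finitely many $G$--orbits.

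I do not expect a genuine obstacle here: the whole content is the Bruhat/Tits-system structure of $G$ relative to $P_k$, which is classical. The only point deserving a line of care is the decomposition $P_k w P_k = \bigsqcup_{w'\in \cal W_{P_k} w \cal W_{P_k}} Bw'B$ -- equivalently, that $\cal W_{P_k}$--double cosets index $P_k$--double cosets in $G$; this follows formally from the relations $BsB\cdot BwB \subseteq BswB \cup BwB$ in a $BN$--pair, applied to simple reflections $s\in \cal W_{P_k}$, without any case analysis.
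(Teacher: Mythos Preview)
Your argument is correct and is essentially the same as the paper's: the remark is stated without proof, immediately after the displayed double-coset decomposition $P_k\backslash G/P_k = \bigsqcup_{w\in \cal W_{P_k}\backslash \cal W_G/\cal W_{P_k}} P_k w P_k$, and your proposal simply spells out why that decomposition (plus finiteness of $\cal W_G$ and the surjection onto orbits in $\sigma_2^\circ$) yields the claim. There is nothing to add.
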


\paragraph*{$G$--orbits in $\tau(G/P_k)$.} Again by $G$--homogeneity of $G/P_k$, the action of $G$ on the tangential variety $\tau(G/P_k)$ conjugates all tangent spaces $T_{[x]}(G/P_k)$ as $[x]\in G/P_k$ varies: indeed, $g\cdot T_{[x]}(G/P_k) = T_{[g\cdot x]}(G/P_k)$. It follows that any $G$--orbit $\cal O$ in $\tau(X)$ is of the form 
\[ \cal O=\bigcup_{[x]\in G/P_k}(\cal O \cap T_{[x]}(G/P_k)) = \bigcup_{g\in G} g\cdot (\cal O \cap T_{[v_{\omega_k}]}(G/P_k)) \ \]
hence it is enough to determine $\cal O\cap T_{[v_{\omega_k}]}(G/P_k)$ and then move it by $G$--action. But $\cal O \cap T_{[v_{\omega_k}]}(G/P_k)$ is a $P_k$--orbit, hence determining the $G$--orbits in $\tau(G/P_k)$ is equivalent to determining the $P_k$--orbits in $T_{[v_{\omega_k}]}(G/P_k)\simeq \mathfrak{g}/\mathfrak{p}_k\otimes \mathbb C v_{\omega_k}\simeq \mathfrak{p}_k^u$:
\[ \frac{\tau(G/P_k)}{G} \ \longleftrightarrow \ \frac{T_{[v_{\omega_k}]}(G/P_k)}{P_k} \ \longleftrightarrow \ \frac{\mathfrak{g}/\mathfrak{p}_k}{P_k} \ . \]
\begin{obs*}
	Unlike the orbits in $\sigma_2^\circ(G/P_k)$ (cf. Remark \ref{rmk:secant orbit finitely many}), the ones in the tangent space $\mathfrak{g}/\mathfrak{p}_k\simeq \mathfrak{p}^u_k$ may be infinitely many. The cases in which the nilpotent algebra $\mathfrak{p}_k^u$ has finitely many $P_k$-orbits have been classified by L. Hille and G. R\"{o}hrle \cite[Theorem 1.1]{hillerohrle}.
\end{obs*}

\section{Orbit poset in $\sigma_2(G/P_k)$ in the cominuscule case}\label{sec:secant_cominuscule}

In this section we assume the fundamental weight $\omega_k$ (hence $G/P_k$) to be cominuscule. 

\begin{obs}\label{rmk:exceptional}
    The geometry of $\OP^2$ and $E_7/P_7$ has been completely analysed in \cite[Secc.\ 4.1, 5.3]{LM01}. In particular, the orbit-closure posets of their secant varieties of lines are
    \[ \OP^2 \subset \sigma_2(\OP^2) \ \ \ \ \ , \ \ \ \ \ E_7/P_7 \subset \sigma_+(E_7/P_7) \subset \tau(E_7/P_7) \subset \sigma_2(E_7/P_7)=\mathbb P^{55} \ . \]
    Moreover, the geometry of quadrics is trivial as they are hypersurfaces. In this respect, the proofs in this section are mainly focus on the other cominuscule varieties of classical type.
    \end{obs}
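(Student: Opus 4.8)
Since this statement is a recollection of results of Landsberg--Manivel rather than a new claim, my plan is not to reprove \cite{LM01} from scratch but to verify that each assertion is consistent with the orbit correspondences set up in \autoref{subsec:sec2 for generalised}, and then to record the quadric case as a degeneracy of the standing hypotheses of this section. Concretely, for both $\OP^2$ and $E_7/P_7$ I would instantiate the two cominuscule correspondences established there: the $G$--orbits in $\sigma_2^\circ$ match the $P$--orbits in $G/P$ (double cosets $\mathcal{W}_{P}\backslash\mathcal{W}_G/\mathcal{W}_{P}$, graded by Hamming distance), while the $G$--orbits in $\tau$ match the Levi orbits in the cominuscule module $\mathfrak{g}_{-1}$.

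For the Cayley plane $\OP^2=E_6/P_1$ the diameter is $2$ and $\mathfrak{g}_{-1}$ is the $16$--dimensional half--spin module of the Levi $\Spin_{10}\cdot\mathbb{C}^{\ast}$, a prehomogeneous space with the three orbits $\{0\}$, the pure--spinor cone, and the open orbit. The pure--spinor directions are exactly the tangents to the lines of $\OP^2$ through the base point, so the corresponding tangent points lie on $\OP^2$ and collapse to the minimal orbit, in the spirit of Remark \ref{rmk:collapsing in secant branch}; this leaves a single nontrivial orbit. Since $\OP^2$ is a Severi variety it is defective, and Zak's dichotomy \cite{zak1993tangents} forces $\tau(\OP^2)=\sigma_2(\OP^2)$, so $\Theta_2=\Sigma_2$ is the unique top orbit and the poset is the two--term chain $\OP^2\subset\sigma_2(\OP^2)$, as asserted.

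For $E_7/P_7\subset\mathbb{P}^{55}$ I would use that the four $E_7$--orbits on the ambient $\mathbb{P}^{55}$ are the rank strata of the $56$--dimensional Freudenthal module, namely $E_7/P_7$ (rank $1$), $\sigma_+$ (rank $\le 2$), $\tau$ (rank $\le 3$, the quartic hypersurface) and the open orbit whose closure is $\sigma_2=\mathbb{P}^{55}$; on the tangent branch these are detected by the $E_6$--orbits in $\mathfrak{g}_{-1}=V_{\omega_1}^{E_6}$, whose rank stratification matches. As a consistency check, $\dim E_7/P_7=27$ and $2\cdot 27+1=55$, so $\sigma_2$ perfectly fills the ambient space and is generically identifiable, whence $\tau\subsetneq\sigma_2=\mathbb{P}^{55}$. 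The one point that genuinely needs checking is that the diamond poset of \autoref{thm:poset cominuscule} collapses to this linear chain: this amounts to the closure relation $\Sigma_2\subseteq\overline{\Theta_3}$, which renders the would-be edge $\Sigma_2\to\Sigma_3$ non-Hasse, and I would read the required containments directly off the explicit orbit descriptions in \cite[Secc.\ 4.1, 5.3]{LM01}. This poset--collapse is the main (and only real) obstacle; everything else is a dimension count or a citation.

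Finally, the quadric $\Q_m=G/P_1$ is a hypersurface in $\mathbb{P}^{m+1}$, so every point of $\mathbb{P}^{m+1}$ lies on a secant (indeed on a tangent) line and $\tau(\Q_m)=\sigma_2(\Q_m)=\mathbb{P}^{m+1}$. There is no proper intermediate orbit closure, and in particular the standing hypothesis $\overline{\Sigma_2}\subsetneq\sigma_2(G/P_k)\subsetneq\mathbb{P}(V^G_{\omega_k})$ used throughout the section is void. This is precisely the content of the closing sentence and requires nothing further, which is why these three families are set aside and the remaining arguments concentrate on the classical cominuscule varieties of type $ABCD$.
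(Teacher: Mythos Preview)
Your proposal is correct, and in fact you do considerably more than the paper does: in the paper this is a bare \texttt{obs} environment with no proof at all --- it simply cites \cite[Secc.\ 4.1, 5.3]{LM01} for the exceptional cases and dismisses quadrics as trivial hypersurfaces, exactly as you recognised in your opening sentence. Your consistency checks (the Levi orbits on $\mathfrak{g}_{-1}$, the Severi defectivity for $\OP^2$, the perfect filling for $E_7/P_7$) are accurate and helpful context, but none of them appears in the paper itself.

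One small over-complication: you speak of the ``diamond poset'' of Theorem~\ref{thm:poset cominuscule} needing to ``collapse'' to a chain for $E_7/P_7$, and flag the inclusion $\Sigma_2\subset\overline{\Theta_3}$ as the main obstacle. But for $d_k^{_G}=3$ the poset of Figure~\ref{figure:graph cominuscule} already \emph{is} a chain $G/P_k\to(\Theta_2{=}\Sigma_2)\to\Theta_3\to\Sigma_3$: the branching in the general picture only begins above $\Theta_3$, and with $d=3$ there is nothing above $\Theta_3$ on the tangent side. The inclusion $\Sigma_2=\Theta_2\subset\overline{\Theta_3}$ is just the total ordering of tangent orbits (Corollary~\ref{cor:tangent orbits}), not an extra relation. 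So there is no collapse to verify; the $E_7/P_7$ chain in the remark is simply the $d=3$ instance of the general poset, read off from \cite{LM01} rather than derived here.
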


\subsection{Orbits in $\sigma_2^\circ(G/P_k)$} 

From Remark \ref{rmk:secant orbit finitely many} we know that the dense subset $\sigma_2^\circ(G/P_k)$ has finitely many $G$--orbits parameterised by double cosets $\cal W_{P_k}\!\!\setminus \!\cal W/\cal W_{P_k}$ of the Weyl group. Their description straightforwardly follows from the description of $P_k$--orbits in a cominuscule $G/P_k$ due to R.\ Richardson, G.\ R\"{o}hrle and R.\ Steinberg \cite{richardson}. Before stating such result, we recall that the root system associated to the unipotent radical $P_k^u$ is 
\[\Phi_k^+:=\Phi(\{\alpha_k\})^+=\{ \alpha \in \Phi \ | \ \langle \alpha | \omega_k \rangle \gneq 0\} \ , \] 
and an ordered sequence of roots $(\beta_1,\ldots, \beta_\ell)\subset \Phi_k^+$ is orthogonal if $(\beta_i |\beta_j) =0$ for all $i\neq j$.
We denote by $d_k^{_G}$ the length of a maximal orthogonal sequence of long roots in $\Phi_k^+$.\\
\indent In the following table we list all the cominuscule varieties together with dimensions, minimal homogeneous embeddings, dimensions of their secant varieties of lines, and the values $d_{k}^{_G}$. For the Grassmannian $\Gr(k,N)$, the symbol $(\boldsymbol{\star})$ is to remark that: by duality of Grassmannians we assume $2k \leq N$; for $N\geq 6$ the secant variety $\sigma_2(\Gr(2,N))$ is defective of dimension $4N-11$. 

\begin{table}[h]
	\centering 
	{\small \begin{tabular}{ |P{2cm}|P{1.4cm}|P{3.2cm}|P{5cm}|P{0.8cm}|  }
			\hline
			$G/P_k$ & $\dim G/P_k$ & $\mathbb P(V_{\omega_k}^G)$ & $\dim \sigma_2(G/P_k)$ & $d_{k}^{_G}$ \\
			\hline
			\hline
			$\Gr(k,N)$ ($\boldsymbol{\star}$)& $k(N-k)$ & $\mathbb P(\bigwedge^k\mathbb C^{N})$ & $\min\{2k(N-k)+1, \binom{N}{k}-1\}$ & $k$  \\
			
			\hline
			$\Q_{2N-2}$ & $2N-2$ & $\mathbb P^{2N-1}$ & $2N-1$ ({\em overfills $\mathbb P^{2N-1}$}) & $2$ \\
			
			\hline
			$\LG_{N}$ & $\binom{N+1}{2}$ & $\mathbb P(V_{\omega_N}^G)\subsetneq\mathbb P(\bigwedge^N\mathbb C^{2N})$ & $\min\{N(N+1)+1, \binom{2N}{N}-\binom{2N}{N-2}-1\}$ & $N$ \\
			
			\hline
			$\mathbb S_N^\pm$ & $\binom{N}{2}$ & $\mathbb P(\bigwedge^{ev/od}\mathbb C^N)$ & $\min\{N(N-1)+1, 2^{N-1}-1\}$ & $\lfloor\frac{N}{2}\rfloor$ \\
			
			\hline
			$\OP^2$ & $16$ & $\mathbb P^{26}$ & $25$ ( {\em cubic hypers. , defective} ) & $2$  \\
			
			\hline
			$E_7/P_7$ & $27$ & $\mathbb P^{55}$ & $55$ ( {\em perfectly fills $\mathbb P^{55}$} ) & $3$ \\
			\hline
	\end{tabular} }
	\caption{Dimensions of secant varieties of lines to cominuscule varieties, and values $d_{k}^{_G}$.}
	\label{table:secants to cominuscule}
\end{table}

\begin{prop}[\cite{richardson}]\label{prop:secant orbit cominuscule}
	There are $d_{k}^{_G}+1$ many $P_k$--orbits in a cominuscule variety $G/P_k$, for $d_{k}^{_G}$ as in \autoref{table:secants to cominuscule}. More precisely, given $(\beta_1,\ldots, \beta_{d_k^{_G}})$ a maximal orthogonal sequence of long roots in $\Phi_k^+$, the $P_k$--orbits in $G/P_k$ are
		\[ G/P_k \ = \ P_k \ \sqcup \ \bigsqcup_{j=1}^{d_{k}^{_G}}P_kw_jP_k \ , \]
    where $w_j:= s_{\beta_1}\cdots s_{\beta_j}$ for any $j \in [d_k^{_G}]$. Moreover, such orbits are totally ordered, namely
	\[ \overline{P_kw_jP_k} = \bigsqcup_{i=1}^j P_kw_iP_k \ \ \ , \ \forall j\in [d_{k}^{_G}] \ . \]
\end{prop}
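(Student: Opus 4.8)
The plan is to prove the three assertions — the parametrisation of the $P_k$-orbits by a maximal orthogonal sequence of long roots, their number being $d_k^{_G}+1$, and the total ordering of the orbit closures — in turn, exploiting the cominuscule grading $\mathfrak g=\mathfrak g_{-1}\oplus\mathfrak g_0\oplus\mathfrak g_1$. The first move is to pass to Weyl-group combinatorics: a $P_k$-orbit on $G/P_k$ is $B$-stable and a union of Bruhat cells $BwP_k$, hence a union of double cosets $P_kwP_k$; moreover, using $B\subset P_k$ and $P_k=\bigsqcup_{u\in\cal W_{P_k}}BuB$, it is determined by its (nonempty) set of $T$-fixed points, which forms a single $\cal W_{P_k}$-orbit among the minimal coset representatives of $\cal W_G/\cal W_{P_k}$. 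So the $P_k$-orbits on $G/P_k$ are in bijection with the double cosets $\cal W_{P_k}\backslash\cal W_G/\cal W_{P_k}$, and since $\omega_k$ is cominuscule — so $m_k(\beta)\in\{0,1\}$ for all $\beta\in\Phi^+$ and the $T$-fixed points correspond to the order ideals of $(\Phi_k^+,\preceq)$, with $\alpha\preceq\beta\iff\beta-\alpha\in\mathbb Z_{\ge0}\,\Phi(\{\alpha_k\})^0$ — the problem becomes one of enumerating these double cosets, choosing good representatives, and ordering their closures.

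The substantial step — which I expect to be the main obstacle — is the parametrisation and the count. Fix a maximal orthogonal sequence $(\beta_1,\dots,\beta_{d_k^{_G}})$ of long roots in $\Phi_k^+$ (e.g. the Kostant cascade: $\beta_1$ the highest root, $\beta_2$ the highest root orthogonal to $\beta_1$, and so on), so the $s_{\beta_i}$ commute and $w_j:=s_{\beta_1}\cdots s_{\beta_j}$ is a well-defined involution; the claim is that $w_0=e,w_1,\dots,w_{d_k^{_G}}$ represent pairwise distinct double cosets and exhaust them. For distinctness I would exhibit a $P_k$-invariant separating $w_i\cdot o$ from $w_j\cdot o$ when $i\neq j$: since $P_k^u$ is abelian it acts trivially on $\mathfrak g_{-1}$, and the residual $L_k$-action endows $\mathfrak g_{-1}$ with the structure of a simple Jordan algebra — a prehomogeneous vector space of commutative parabolic type — whose $L_k$-orbits are exactly the $d_k^{_G}+1$ rank strata, with $w_j\cdot o$ lying over rank $j$, so the rank is the invariant (and is $P_k$-invariant, the $P_k^u$-action on the big cell $\Omega\cong\mathfrak g_{-1}$ being the rank-preserving quadratic action). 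For exhaustion I would use that every $P_k$-orbit on $G/P_k$ meets $\Omega$, because its closure contains $o\in\Omega$ (the unique closed $P_k$-orbit) and the orbit is open dense in its irreducible closure; hence the $P_k$-orbits inject into the $L_k$-orbits on $\mathfrak g_{-1}$ and there are exactly $d_k^{_G}+1$ of them. This is the content of the Richardson–Röhrle–Steinberg analysis \cite{richardson}, where the delicate part is running the combinatorics of $\Phi_k^+$ and its orthogonal cascade uniformly across all types; the self-contained alternative is a case-by-case check against \autoref{table:secants to cominuscule} (for $\Gr(k,N)$ the invariant is $\dim(U\cap U_0)$; for $\LG(N,2N)$ and $\mathbb S_N^\pm$ the dimension of intersection of maximal isotropic subspaces; for $\Q_m$ it is immediate; for $\OP^2$ and $E_7/P_7$ it is the two- resp. three-orbit stratification of \cite{LM01}).

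For the closures I would exhibit, for each $j\in[d_k^{_G}]$, an explicit curve in $\overline{P_kw_jP_k}$ degenerating $w_j\cdot o$ to $w_{j-1}\cdot o$. Put $\gamma:=s_{\beta_1}\cdots s_{\beta_{j-1}}=w_{j-1}$. Since $\beta_j\in\Phi_k^+$ the root subgroup $U_{\beta_j}$ lies in $P_k$ and fixes $o$, while $-\beta_j$ is a weight of $\mathfrak g_{-1}$ so $U_{-\beta_j}\not\subset P_k$; also $\dot{s}_{\beta_j}$ lies in the rank-one subgroup $\SL_2(\beta_j)$. Consider the morphism $c\colon\mathbb A^1\to G/P_k$, $c(t)=\gamma\,u_{\beta_j}(1)\,u_{-\beta_j}(t)\cdot o$. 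For $t\neq 0$ the factor $u_{\beta_j}(1)u_{-\beta_j}(t)$ lies in the big Bruhat cell of $\SL_2(\beta_j)$, hence equals $u_{\beta_j}(a)\,\dot{s}_{\beta_j}\,b$ with $b$ in the Borel of $\SL_2(\beta_j)\subset P_k$; the orthogonality relation $\gamma(\beta_j)=\beta_j$ then pushes $u_{\beta_j}(a)$ back across $\gamma$ into $P_k$, giving $c(t)=u_{\beta_j}(\pm a)\cdot\gamma\,\dot{s}_{\beta_j}\cdot o=u_{\beta_j}(\pm a)\cdot w_j\cdot o\in P_kw_jP_k/P_k$, whereas $c(0)=\gamma\,u_{\beta_j}(1)\cdot o=\gamma\cdot o=w_{j-1}\cdot o$. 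Hence $P_kw_{j-1}P_k\subset\overline{P_kw_jP_k}$ and, iterating, $P_k\sqcup\bigsqcup_{i=1}^{j}P_kw_iP_k\subseteq\overline{P_kw_jP_k}$.

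The reverse inclusion is formal: an orbit closure is a union of orbits, and an irreducible variety has a unique dense orbit. If $P_kw_iP_k\subset\overline{P_kw_jP_k}$ for some $i>j$, then $\overline{P_kw_iP_k}\subseteq\overline{P_kw_jP_k}$, while the curve above gives $P_kw_jP_k\subset\overline{P_kw_iP_k}$ and hence $\overline{P_kw_jP_k}\subseteq\overline{P_kw_iP_k}$; the two irreducible closures would then coincide yet contain $P_kw_iP_k$ and $P_kw_jP_k$ as distinct dense orbits, a contradiction. Therefore $\overline{P_kw_jP_k}=P_k\sqcup\bigsqcup_{i=1}^{j}P_kw_iP_k$ for all $j$; combined with the count this is the assertion, and one checks a posteriori that the resulting stratification reproduces the dimensions of $\sigma_2(G/P_k)$ recorded in \autoref{table:secants to cominuscule}.
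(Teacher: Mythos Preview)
The paper does not actually prove this proposition: it is stated as a result of Richardson--R\"ohrle--Steinberg, and the paper's proof consists solely of pointers to \cite[Sec.~2, Prop.~2.11 \& Table~1]{richardson} for the orbit count and to \cite[Cor.~3.7(a)]{richardson} for the total order. Your proposal, by contrast, sketches an independent argument, so the comparison is ``citation versus proof sketch''.

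Your sketch is broadly along the lines of \cite{richardson} and the closure part is clean: the $\SL_2(\beta_j)$-curve argument for $P_kw_{j-1}P_k\subset\overline{P_kw_jP_k}$ is correct (the orthogonality giving $\gamma(\beta_j)=\beta_j$ is exactly what lets you push $u_{\beta_j}(a)$ across $\gamma$), and the reverse inclusion via uniqueness of the dense orbit is fine. The one step that is not fully justified is the exhaustion claim ``hence the $P_k$-orbits inject into the $L_k$-orbits on $\mathfrak g_{-1}$''. You correctly show every $P_k$-orbit meets the opposite big cell $\Omega\cong\mathfrak g_{-1}$, but $\Omega$ is \emph{not} $P_k$-stable (already for $\mathbb P^n$ the unipotent radical of $P_1$ moves points of $\Omega$ to the hyperplane at infinity), so two points of $\Omega$ lying in the same $P_k$-orbit need not be connected by a path staying inside $\Omega$, and the assertion that the Jordan rank is a $P_k$-invariant on $G/P_k$ --- equivalently, that the birational $P_k^u$-action on $\Omega$ preserves rank --- is exactly the nontrivial content of \cite{richardson} that you are invoking rather than proving. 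You flag this yourself and offer the case-by-case verification against \autoref{table:secants to cominuscule} as a fallback; that route (or a direct count of $|\cal W_{P_k}\backslash\cal W_G/\cal W_{P_k}|$) does close the argument. The paper simply short-circuits all of this by quoting the reference.
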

\begin{proof}
	The number of orbits appears in \cite[Sec.\ 2, Prop.\ 2.11 \& Table 1]{richardson}, while the total order of the orbits is proved in \cite[Corollary 3.7(a)]{richardson}.
\end{proof}

\indent From the above proposition we know that the direct product $G/P_k\times G/P_k$ splits in $d_{k}^{_G}+1$ many $G$--orbits, with representatives $(idP_k,idP_k)$ and $(idP_k, w_jP_k)$ for any $j \in [d_{k}^{_G}]$. Such pairs of representatives have a nice geometric property which one deduces by putting together \cite[Proposition 4.1(c)]{buch2013} and \cite[Lemma 4.2]{buch2013}: for any $j \in [d_k^{_G}]$ the pair $(idP_k,w_jP_k)$ has Hamming distance $j$. In particular, the point $[idP_k + w_1P_k]$ lies in $G/P_k$, and both orbits $G \cdot (idP_k,idP_k)$ and $G \cdot (idP_k,w_1P_k)$ in the direct product collapse to the minimal orbit $G/P_k$ in $\sigma_2^\circ(G/P_k)$.

\begin{obs*}
    In the above notation, we identify the points $[idP_k + w_jP_k]$ and $[v_{\omega_k}+w_j\cdot v_{\omega_k}]$ in $\sigma_2^\circ(G/P_k)$. Note that the point $w_j\cdot v_{\omega_k}$ is well-defined: indeed, the Weyl element $w_j\in \cal W_{G}=N_G(T)/T$ lifts to a coset $w_jT \subset G$, but the maximal torus $T\subset P_k$ acts trivially on $v_{\omega_k}$.
    \end{obs*}

\begin{lemma}\label{lemma:secant orbits}
    For $G/P_k$ cominuscule, the dense subset $\sigma_2^\circ(G/P_k)$ splits in the $d_{k}^{_G}$--many $G$--orbits
    \[ \Sigma_j \ := \ G\cdot [idP_k + w_jP_k] \ = \ G \cdot [v_{\omega_k} + w_j\cdot v_{\omega_k}] \ \ \ \ \ , \ \forall j\in [d_{k}^{_G}] \]
    where $w_j:=s_{\beta_1}\cdots s_{\beta_j}\in \cal W_G$ are defined by a maximal orthogonal sequence $(\beta_1,\ldots, \beta_{d_k^{_G}})$ of long roots in $\Phi_k^+$. In particular, $\Sigma_1=G/P_k$. Moreover, such orbits are totally ordered, i.e. 
    \[\overline{\Sigma_j}=\bigsqcup_{i=1}^j \Sigma_i \ \ \ , \ \forall j \in [d_{k}^{_G}] \ . \]
    \end{lemma}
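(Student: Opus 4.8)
\emph{Strategy.} The plan is to translate Proposition~\ref{prop:secant orbit cominuscule} through the operation that sends a pair of points of $G/P_k$ to the generic point of the line they span. The only real issue, anticipated in Remark~\ref{rmk:collapsing in secant branch}, is that this operation is not injective on $G$--orbits: I must show that the sole collapse is the one already observed, namely that the pairs of Hamming distance $0$ and $1$ both map into $G/P_k$.

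\emph{Setting up the incidence.} For $j\in\{0,\dots,d_{k}^{_G}\}$ let $O_j\subseteq (G/P_k)^2$ be the $G$--orbit of $(idP_k,\,w_jP_k)$ (so $O_0$ is the diagonal $\Delta_{G/P_k}$) and let $J_j\subseteq O_j\times\mathbb P(V_{\omega_k}^G)$ be the incidence variety $\{(([x],[y]),[z]):[z]\in\langle x,y\rangle\}$, a $\mathbb P^1$--bundle over $O_j$ for $j\geq 1$, with its tautological $G$--action and $G$--equivariant projection $\pi_j$ to $\mathbb P(V_{\omega_k}^G)$. By Proposition~\ref{prop:secant orbit cominuscule} and the correspondence recalled in Sec.\ \ref{subsec:sec2 for generalised}, the $O_j$ are precisely the $G$--orbits of $(G/P_k)^2$, totally ordered with $\overline{O_j}=\bigsqcup_{i\leq j}O_i$, and $\sigma_2^\circ(G/P_k)=\bigcup_{j}\pi_j(J_j)$. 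Now the stabiliser in $G$ of the base pair $([v_{\omega_k}],[w_j\cdot v_{\omega_k}])$ contains the maximal torus $T$, and since $\omega_k\neq w_j(\omega_k)$ for $j\geq 1$, $T$ already acts on the line $\langle v_{\omega_k},\,w_j\cdot v_{\omega_k}\rangle\cong\mathbb P^1$ with exactly the two chord points as fixed points and transitively off them; hence the fibrewise complement $J_j^\circ$ of the two endpoint sections is a single $G$--orbit, so $\pi_j(J_j)=G/P_k\cup\Sigma_j$ with $\Sigma_j:=G\cdot[v_{\omega_k}+w_j\cdot v_{\omega_k}]$ one orbit. For $j\in\{0,1\}$ the chord lies inside $G/P_k$ (for $j=1$ because the pair has Hamming distance $1$, by \cite[Prop.~4.1(c)]{buch2013} and \cite[Lemma~4.2]{buch2013}), so $\Sigma_0=\Sigma_1=G/P_k$, and therefore $\sigma_2^\circ(G/P_k)=\bigcup_{j=1}^{d_{k}^{_G}}\Sigma_j$ with $\Sigma_1=G/P_k$.

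\emph{The closure order, easy half.} For $1\leq i\leq j$ one has $O_i\subseteq\overline{O_j}$, and over $O_i$ the limiting chords are still honest lines (the two chord points stay distinct), so $J_i\subseteq\overline{J_j}$ and the endpoint sections of $J_i$ (indeed of $J_j$) lie in $\overline{J_j^\circ}$; applying the proper map $\pi_j$ gives $G/P_k\cup\Sigma_i=\pi_i(J_i)\subseteq\pi_j(\overline{J_j})=\overline{\Sigma_j}$, so $\Sigma_i\subseteq\overline{\Sigma_j}$. Thus $\bigsqcup_{i=1}^{j}\Sigma_i\subseteq\overline{\Sigma_j}$; and since two distinct orbits cannot each lie in the closure of the other (an orbit being dense in its irreducible closure), once the $\Sigma_i$ are known pairwise distinct this forces $\overline{\Sigma_j}\cap\sigma_2^\circ(G/P_k)=\bigsqcup_{i=1}^{j}\Sigma_i$.

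\emph{Distinctness --- the main obstacle.} It remains to prove that $\Sigma_1,\dots,\Sigma_{d_{k}^{_G}}$ are pairwise distinct. That $\Sigma_j\neq G/P_k$ for $j\geq 2$ is immediate: the chord $\langle v_{\omega_k},w_j\cdot v_{\omega_k}\rangle$ is not contained in $G/P_k$ (its endpoints have Hamming distance $j\geq 2$), so --- $G/P_k$ being cut out by quadrics --- it meets $G/P_k$ in exactly the two chord points, whence the third point $[v_{\omega_k}+w_j\cdot v_{\omega_k}]$ lies off $G/P_k$. For $2\leq i<j$ I would argue by dimension: by Terracini's Lemma $\dim\overline{\Sigma_j}=\dim\langle T_{[v_{\omega_k}]}(G/P_k),\,T_{[w_j\cdot v_{\omega_k}]}(G/P_k)\rangle$ (a generic point of $\Sigma_j$ lies on a chord $G$--conjugate to the base one), and each affine tangent space $\widehat{T_{[x]}(G/P_k)}$ is a sum of one--dimensional $T$--weight spaces of $V_{\omega_k}^G$, so the codimension of this span is governed by the set of weights common to $\widehat{T_{[v_{\omega_k}]}(G/P_k)}$ and $w_j\cdot\widehat{T_{[v_{\omega_k}]}(G/P_k)}$; one then checks that this set strictly shrinks as $j$ grows --- a combinatorial fact about the weight poset of the cominuscule module $V_{\omega_k}^G$ relative to the orthogonal long--root sequence $\beta_1,\dots,\beta_{d_{k}^{_G}}$, to be verified uniformly or model by model (Grassmannians, Lagrangian Grassmannians, Spinor varieties; the exceptional $\OP^2$ and $E_7/P_7$ being covered by \cite{LM01}, cf.\ Remark~\ref{rmk:exceptional}). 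This yields $\dim\Sigma_1<\dim\Sigma_2<\dots<\dim\Sigma_{d_{k}^{_G}}$, hence pairwise distinctness, and with the previous paragraph the total order $\overline{\Sigma_j}=\bigsqcup_{i=1}^{j}\Sigma_i$. I expect this last weight/dimension computation to be the technically heaviest ingredient; the rest is formal, resting on Proposition~\ref{prop:secant orbit cominuscule} and \cite{buch2013}.
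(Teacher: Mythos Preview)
Your structural setup is cleaner than the paper's: the torus argument showing that $J_j^\circ$ is a single $G$--orbit, and the closure inclusion $\Sigma_i\subseteq\overline{\Sigma_j}$ via degeneration of chords, are both correct and more explicit than the paper, which simply asserts that the total order is ``induced by the one of the orbits in the direct product''. The reduction of the problem to distinctness of the $\Sigma_j$ for $j\geq 2$ is exactly right.

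The gap is in the distinctness step. Your claim that $\dim\overline{\Sigma_j}=\dim\langle T_{[v_{\omega_k}]}(G/P_k),\,T_{[w_j\cdot v_{\omega_k}]}(G/P_k)\rangle$ is \emph{not} Terracini's Lemma. Terracini computes the differential of the full secant map $Ab\sigma_2(X)\to\sigma_2(X)$, whose image at $((x,y),[x+y])$ is indeed $\langle T_xX,T_yX\rangle$; but for $j<d_{k}^{_G}$ you are projecting from the proper subvariety $J_j\subsetneq Ab\sigma_2(X)$, and the image of \emph{that} differential is only $\mathfrak g\!\cdot\![x+y]$, which is contained in $\mathfrak g x+\mathfrak g y=\langle \widehat{T_xX},\widehat{T_yX}\rangle$ but not obviously equal to it. Thus even if you showed the spans $\langle T_x,T_y\rangle$ strictly grow in $j$ (which you leave open), this would not yield $\dim\Sigma_i<\dim\Sigma_j$. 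The paper sidesteps this entirely: it produces, case by case, an explicit $G$--invariant scalar separating the orbits. For the Lagrangian Grassmannian $\LG(N,2N)$ it uses the contraction map $\psi_q\colon\mathbb C^{2N}\to\bigwedge^{N-1}\mathbb C^{2N}$ induced by the symplectic form, and shows that $\dim\Ker\psi_{\,\bold{e}_{[N]}+w_s\cdot\bold{e}_{[N]}}=N-s$; since the $\Sp_{2N}$--action preserves this kernel dimension, the $\Sigma_s$ are pairwise distinct. Grassmannians and Spinor varieties are handled by the same kernel--dimension trick in \cite{galganostaffolani2022grass,galgano2023spinor}, and the exceptional cases by \cite{LM01}. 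If you want to salvage your approach, either prove $\mathfrak g(x+y)=\mathfrak g x+\mathfrak g y$ for extremal $x,y$ in the cominuscule setting (this is plausible and would give a uniform argument), or compute $\dim G\cdot[v_{\omega_k}+w_j v_{\omega_k}]$ directly via stabilisers rather than via tangent--space spans.
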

\begin{proof}
    The total ordering of the orbit closures in $\sigma_2^\circ(G/P_k)$ is induced by the one of the orbits in the direct product. We already know that the diagonal $G \cdot ([v_{\omega_k}],[v_{\omega_k}])$ and the orbit $G\cdot ([v_{\omega_k}],[w_1\cdot v_{\omega_k}])$ of pairs with Hamming distance $1$ define the same $G$--orbit in $\sigma_2^\circ(G/P_k)$, namely $G/P_k$. The only thing to check is that for any $\{s,s'\}\in \binom{[d_k^{_G}]\setminus \{1\}}{2}$ the orbits $G\cdot ([v_{\omega_k}],[w_s\cdot v_{\omega_k}])$ and $G\cdot ([v_{\omega_k}],[w_{s'}\cdot v_{\omega_k}])$ define two distinct orbits of bisecant points $\Sigma_s$ and $\Sigma_{s'}$. \\
    \indent Exceptional cases and quadrics have already been settled (see Remark \ref{rmk:exceptional}), hence we prove the case of Lagrangian Grassmannians only, adapting arguments from the ones used for Grassmannians and Spinor varieties in \cite{galganostaffolani2022grass, galgano2023spinor}. Consider the Lagrangian Grassmannian 
    \[ \LG_{N}=\Sp_{2N}/P_N \subset \mathbb P(V_{\omega_N}^{C_N})\subsetneq\mathbb P(\bigwedge^N\mathbb C^{2N}) \ . \]
    It is the projective variety of $N$--dimensional linear subspaces of $\mathbb C^{2N}$ which are isotropic (i.e. $W\subset W^\perp$) with respect to a non-degenerate symplectic form $\boldsymbol{\omega}\in \bigwedge^2(\mathbb C^{2N})^\vee$. Fix $(e_1,\ldots, e_N, e_{-N},\ldots,e_{-1})$ a basis of $\mathbb C^{2N}$ such that $\boldsymbol{\omega}(e_i,e_j)=\boldsymbol{\omega}(e_{-i},e_{-j})=0$ and $\boldsymbol{\omega}(e_i,e_{-j})=\delta_{ij}$ for any $i,j\in [n]$, where $\delta_{ij}$ denotes the Kronecker symbol. We denote by $[W]$ the point in $\LG_{N}$ corresponding to the isotropic subspace $W$. We fix the notation $E^+:=\langle e_1,\ldots, e_N\rangle_\mathbb C$ and $E^-:=\langle e_{-N},\ldots, e_{-1}\rangle_\mathbb C$ for the isotropic subspaces corresponding to the highest weight vector $v_{\omega_N}=\bold{e}_{[N]}=[E^+]$ and the lowest weight vector $\ell_{\omega_N}=\bold{e}_{-[N]}=[E^-]$ in $V_{\omega_N}^{C_N}$. \\
    \indent The unique $\Sp_{2N}$--equivariant map $V_{\omega_N}^{C_N}\otimes V_{\omega_1}^{C_N}\rightarrow V_{\omega_{N-1}}^{C_N}$ defined on decomposable elements $\bold{q}_{[N]}=q_1\wedge \ldots \wedge q_N \in V_{\omega_N}^{C_N}$ by
    \[ \bold{q}_{[N]}\otimes v \mapsto v^*(\bold{q}_{[N]}):= \sum_{j \in [N]}(-1)^{j+1}\boldsymbol{\omega}(v,q_j)\bold{q}_{[N]\setminus \{j\}} \]   
    and extended by linearity, induces for any $q\in \bigwedge^N\mathbb C^{2N}$ the linear map
    \begin{equation}\label{eq:kernels}
        \begin{matrix}
            \psi_{q}: & \mathbb C^{2N} & \longrightarrow & \bigwedge^{N-1}\mathbb C^{2N}\\
            & v & \mapsto & v^*(q)
            \end{matrix}
        \end{equation}
    For any $q \in V_{\omega_N}^{C_N}$ we denote the kernel of $\psi_q$ by $H_q:=\Ker(\psi_{q})$. Note that it holds $H_{[W]}=W$ for any $[W]\in \LG_{N}$. \\
    \indent From \cite[Lemma 4.2]{buch2013} and the proof of \cite[Prop.\ 4.5]{buch2013} we know that $\Sp_{2N}$--orbits in $\LG_{N}\times \LG_{N}$ are parameterised by the Hamming distance between points in a pair, which is the codimension of the intersection of the corresponding subspaces:
    \[ d([V],[W])=N-\dim(V\cap W) \ \ \ \ , \ \forall [V],[W]\in \LG_{N}. \]
    Thus a set of representatives for the $\Sp_{2N}$--orbits in $\LG_{N}\times \LG_{N}$ is given by
    \[ \left(\bold{e}_{[N]}, \bold{e}_{[N]\setminus[s]}\wedge \bold{e}_{-[s]}\right) \ \ \ \ , \ \forall s\in [N] \ . \]
    For any $s\in [N]\setminus \{1\}$ the bisecant point 
    \[ \bold{x}_s:=\left[\bold{e}_{[N]}+ \bold{e}_{[N]\setminus[s]}\wedge \bold{e}_{-[s]}\right]= \left[\bold{e}_{[N]\setminus [s]}\wedge (\bold{e}_{-[s]} + (-1)^s\bold{e}_{[s]})\right]\] in $\sigma_2^\circ(\LG_{N})$ is a representative for the orbit $\Sigma_s$ and it defines the $(N-s)$--dimensional subspace $H_{\bold{x}_s}= \langle e_{s+1},\ldots, e_N\rangle_\mathbb C$. Since for any $g \in \Sp_{2N}$ and $q \in V_{\omega_N}^{C_N}$ it holds $H_{g\cdot q}=g H_q g^{-1}$, the action of $\Sp_{2N}$ preserves the dimensions of such subspaces, hence for any $\{s,s'\}\in \binom{[N]\setminus \{1\}}{2}$ the points $\bold{x}_s$ and $\bold{x}_{s'}$ cannot be conjugated by the $\Sp_{2N}$--action. We deduce that the $N$ orbits $\Sigma_j$ for $j\in [N]$ are all distinct.
    \end{proof}

\begin{es}[Grassmannian of planes in $\mathbb C^4$]
    The $A_3$--type cominuscule variety $\SL_4/P_2\subset \mathbb P(V_{\omega_2}^{A_3})$ is the Grassmannian $\Gr(2,4)\subset \mathbb P(\bigwedge^2\mathbb C^4)$ in its (minimal) Pl\"{u}cker embedding. Given the simple roots $\Delta=\{\alpha_i:=e_i-e_{i+1} \ | \ i\in [3]\}$, the root system associated to $P_2$ is $\Phi^+_2=\{ \alpha_2, \alpha_1 + \alpha_2, \alpha_2 + \alpha_3 , \alpha_1 + \alpha_2 + \alpha_3\}$. The simple reflection $s_{\alpha_i}$ in the Weyl group $\cal W_{\SL_4}$ corresponds to the transposition $(i \ i+1)\in \mathfrak{S}(e_1,e_2,e_3,e_4)$. The Weyl groups of $\SL_4$ and $P_2$ are respectively the symmetric group $\cal W_{\SL_4}=\langle s_{\alpha_1},s_{\alpha_2},s_{\alpha_3}\rangle \simeq \mathfrak{S}_4$ and the Klein group $\cal W_{P_2}=\langle s_{\alpha_1},s_{\alpha_3}\rangle \simeq (\mathbb Z/2\mathbb Z)^{\times 2}$. \\
    After identifying every Schubert cell $Bw_\sigma P$ with the Weyl representative $w_\sigma \in \cal W_{\SL_4}$, the Bruhat decomposition of $\Gr(2,4)$ into Schubert cells is controlled by the $6$ cosets in $\cal W_{\SL_4}/\cal W_{P_2}$, whose representatives lead to the Hasse diagram in \autoref{figure:Bruhat G(2,4).a}. On the other hand, $P_2$--orbits in $\Gr(2,4)$ are in bijection with the double cosets in $\cal W_{P_2}\!\!\setminus \!\cal W_{\SL_4}/\cal W_{P_2}$. In particular, the left-action of $\cal W_{P_2}$ conjugates the vertices of the rhombus in the diagram in \autoref{figure:Bruhat G(2,4).a}, leading to the Hasse diagram in \autoref{figure:Bruhat G(2,4).b}. 
    
    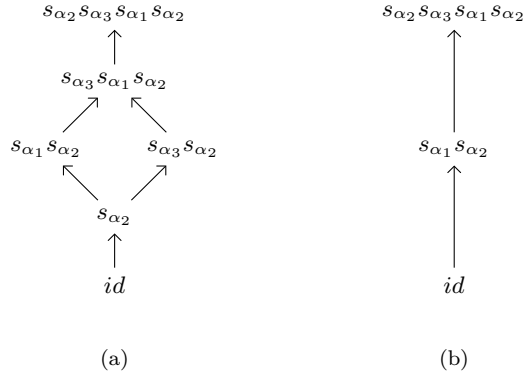
\begin{figure}[h]
	\begin{center}
        \begin{subfigure}[b]{0.3\linewidth}
        \begin{center}
		{\small \begin{tikzpicture}[scale=1.6]
			
			\node(id) at (0,0){{$id$}};
			\node(s2) at (0,0.5){{$s_{\alpha_2}$}};
			\node(s12) at (-0.5,1){{$s_{\alpha_1}s_{\alpha_2}$}};
			\node(s32) at (0.5,1){{$s_{\alpha_3}s_{\alpha_2}$}};
			\node(s312) at (0,1.5){$s_{\alpha_3}s_{\alpha_1}s_{\alpha_2}$};
			\node(s2312) at (0,2){$s_{\alpha_2}s_{\alpha_3}s_{\alpha_1}s_{\alpha_2}$};
			
			\path[font=\scriptsize,>= angle 90]
			(id) edge [->] node [left] {} (s2)
			(s2) edge [->] node [left] {} (s32)
			(s2) edge [->] node [left] {} (s12)
			(s32) edge [->] node [left] {} (s312)
			(s12) edge [->] node [left] {} (s312)
			(s312) edge [->] node [left] {} (s2312);
			\end{tikzpicture}}  
            \end{center}
            \caption{}
	        \label{figure:Bruhat G(2,4).a}
        \end{subfigure}
        \begin{subfigure}[b]{0.3\linewidth}
            \begin{center}
            {\small \begin{tikzpicture}[scale=1.6]
			
			\node(id) at (0,0){{$id$}};
			\node(s2) at (0,1){{$s_{\alpha_1}s_{\alpha_2}$}};
			\node(s2312) at (0,2){$s_{\alpha_2}s_{\alpha_3}s_{\alpha_1}s_{\alpha_2}$};
			
			\path[font=\scriptsize,>= angle 90]
			(id) edge [->] node [left] {} (s2)
			(s2) edge [->] node [left] {} (s2312);
			\end{tikzpicture}} 
            \end{center}
            \caption{}
	        \label{figure:Bruhat G(2,4).b}
        \end{subfigure}
	\end{center}
	\caption{Hasse diagrams of Schubert cells (\ref{figure:Bruhat G(2,4).a}) and of $P_2$--orbits (\ref{figure:Bruhat G(2,4).b}) in $\Gr(2,4)$.}
	\label{figure:Bruhat G(2,4)}
\end{figure}

    \noindent Let's re-analyze the $P_2$--orbits in $\Gr(2,4)$ in terms of Proposition \ref{prop:secant orbit cominuscule}. A maximal orthogonal sequence of roots in $\Phi^+_2$ is $(\alpha_1+\alpha_2, \alpha_2+\alpha_3)$. The first orbit is given by $P_2$ itself, represented by $id$. The reflection $s_{\alpha_1+\alpha_2}$ corresponds to the transposition $(1 \ 3)=(1 \ 2)(2 \ 3) (1 \ 2)$, hence $s_{\alpha_1+\alpha_2}=s_{\alpha_1}s_{\alpha_2}s_{\alpha_1}$. This gives the second orbit 
    \[P_2s_{\alpha_1+\alpha_2}P_2=P_2s_{\alpha_1}s_{\alpha_2}P_2 \ . \]
    Finally, accordingly to Proposition \ref{prop:secant orbit cominuscule}, the third orbit has representative $s_{\alpha_1+\alpha_2}s_{\alpha_2+\alpha_3}$, corresponding to the permutation $(1 \ 3)(2 \ 4)$. Similarly to the previous case, one gets $s_{\alpha_1+\alpha_2}s_{\alpha_2+\alpha_3}=(s_{\alpha_3}s_{\alpha_2}s_{\alpha_3})(s_{\alpha_1}s_{\alpha_2}s_{\alpha_1})$, leading to the third orbit
    \[ P_2 s_{\alpha_1+\alpha_2}s_{\alpha_2+\alpha_3} P_2 = P_2 s_{\alpha_2}s_{\alpha_3}s_{\alpha_1}s_{\alpha_2} P_2 \ .\]
    \noindent We conclude this example by exhibiting the orbits in the dense subset $\sigma_2^\circ(\Gr(2,4))\subseteq \mathbb P(\bigwedge^2\mathbb C^4)$ (in this case the last inclusion is actually an equality). Recall that the highest weight vector in $\bigwedge^2\mathbb C^4$ is $v_{\omega_2}=e_1\wedge e_2$. From Lemma \ref{lemma:secant orbits} we know that the two lower orbits in \autoref{figure:Bruhat G(2,4).b} collapse to a unique orbit in $\sigma_2^\circ(\Gr(2,4))$, and the only $\SL_4$--orbits are $\Gr(2,4)$ and $\SL_4\cdot [(e_1\wedge e_2) + (s_{\alpha_2}s_{\alpha_3}s_{\alpha_1}s_{\alpha_2})\cdot (e_1\wedge e_2)]$. Since $s_{\alpha_2}s_{\alpha_3}s_{\alpha_1}s_{\alpha_2}$ corresponds to the permutation $(13)(24)$, one gets $(s_{\alpha_2}s_{\alpha_3}s_{\alpha_1}s_{\alpha_2})\cdot (e_1\wedge e_2)= e_3\wedge e_4$ and recover the orbit partition
    \[ \sigma_2^\circ(\Gr(2,4))= \Gr(2,4) \sqcup \SL_4\cdot [e_1\wedge e_2 + e_3\wedge e_4] \ , \]
    corresponding to rank--$2$ and rank--$4$ skew-symmetric matrices of size $4\times 4$ (cf. \cite[Sec.\ 2]{galganostaffolani2022grass}).
    \end{es}

\begin{es}[Grassmannians]
    Consider the Grassmannian $\Gr(k,N)=A_N/P_k$ in its minimal Pl\"{u}cker embedding $\mathbb P(V_{\omega_k}^{A_N})=\mathbb P(\bigwedge^k\mathbb C^N)$. Set $\Delta:=\{\alpha_i:=e_i-e_{i+1} \ | \ i\in [N-1]\}$ for the simple roots, $\cal W_{\SL_N}=\langle s_{\alpha_i} \ | \ i\in [N-1]\rangle \simeq \mathfrak{S}_N$ and $\cal W_{P_k}=\langle s_{\alpha_i} \ | \ i\in [N-1]\setminus \{k\}\rangle \simeq \mathfrak{S}_k \times \mathfrak{S}_{N-k}$ for the Weyl groups of $\SL_N$ and $P_k$ respectively. The Schubert cells in $\Gr(k,N)$ are in bijection with the $\binom{N}{k}$ cosets $\cal W_{\SL_N}/\cal W_{P_k}$. A maximal orthogonal sequence of (long) roots in the root system $\Phi_k^+=\{ e_i-e_{j+1} \ = \ \sum_{\ell \in [j]\setminus [i-1]}\alpha_\ell \ | \ i\in [k] \ , \ j \in [N-1]\setminus [k-1]\}$ associated to $P_k$ is 
    
    \begin{align*}
        \beta_1 & = \alpha_1 + \ldots + \alpha_k = e_1-e_{k+1}\\
        \beta_2 & = \alpha_2 + \ldots + \alpha_{k+1} = e_2-e_{k+2}\\
        \vdots \\
        \beta_{k-1} & = \alpha_{k-1} + \ldots + \alpha_{2k-2} = e_{k-1} - e_{2k-1} \\
        \beta_k & = \alpha_k + \ldots + \alpha_{2k-1} = e_k-e_{2k}
        \end{align*}

\noindent defining the reflections $s_{\beta_j}=(j \ k+j)$ and the Weyl representatives $w_j:=(1 \ k+1)(2 \ k+2) \cdots (j \ k+j)$ for $j \in [k]$. By Lemma \ref{lemma:secant orbits} the subset $\sigma_2^\circ(\SL_N/P_k)$ splits in the $\SL_N$--orbits
\[ \SL_N\cdot [v_{\omega_k}+w_j\cdot v_{\omega_k}] = \SL_N \cdot \left[\bold{e}_{[k]} + \bold{e}_{[k+j]\setminus [k]}\wedge \bold{e}_{[k]\setminus [j]}\right] \ \ \ \ \ , \ \forall j \in [k]\]
where $\bold{e}_{I}:=e_{i_1}\wedge\ldots \wedge e_{i_d}$ for any $I \subset \binom{[N]}{d}$. The above orbit partition is precisely the one already obtained in \cite[Proposition 3.1.1]{galganostaffolani2022grass}.
    \end{es}

\subsection{Orbits in $\tau(G/P_k)$} 

We are left with analyzing the tangent orbits in $\sigma_2(G/P_k)$, or equivalenlty the $P_k$--orbits in $\mathfrak{g}/\mathfrak{p}_k\simeq \mathfrak{p}_k^u$. As the variety $G/P_k$ is cominuscule, the tangent space $\mathfrak{g}/\mathfrak{p}_k=\mathfrak{g}_{-1}$ is an irreducible $P_k$--module. We recall some arguments due to L. Hille and G. R\"{o}hrle \cite{hillerohrle} implying that for cominuscule varieties such orbits are finitely many. \\
\indent Given a parabolic subgroup $P_I$ defined by the subset $I\subset \Delta$ of simple roots, the {\em nilpotency class} of the radical unipotent $P^u$ (or equivalently of the nilpotent algebra $\mathfrak{p}^u$) is 
\begin{equation}\label{eq:nilpotency class}
\ell(P^u) := \sum_{\alpha \in \Delta\setminus \Phi(I)^0}m_\alpha(\rho) \ , 
\end{equation}
where $\rho$ is the longest root in $\Phi$, $m_\alpha(\rho)$ the coefficient of $\alpha$ in $\rho$, and $\Phi(I)^0$ are the roots of the parabolic algebra $\mathfrak{p}_I$. For $I=\{\alpha_k\}$ and $\omega_k$ cominuscule, one gets $\ell(P_k^u)=m_{\alpha_k}(\rho)=1$. Then \cite[Theorem 1.1]{hillerohrle} implies that, for any cominuscule variety $G/P_k$, the tangent space $\mathfrak{g}/\mathfrak{p}_k$ has finitely many $P_k$--orbits. Next proposition gives the exact numbers of such orbits, which we determine by analysing the tangent spaces $\mathfrak{g}/\mathfrak{p}_k$ case by case.

\begin{prop}\label{prop:tangent orbit cominuscule}
	For $G/P_k$ cominuscule, the tangent space $T_{[v_{\omega_k}]}(G/P_k)\simeq \mathfrak{g}/\mathfrak{p}_k$ splits in $(d_{k}^{_G}+1)$--many orbits for $d_{k}^{_G}$ as in \autoref{table:secants to cominuscule}, and they are totally ordered:
	\[ T_{[v_{\omega_k}]}(G/P_k) = \{[v_{\omega_k}]\} \sqcup \bigsqcup_{j=1}^{d_{k}^{_G}}\cal R_j \ \ \ \ , \ \ \ \ \ \overline{\cal R_i}=\{[v_{\omega_k}]\} \sqcup \bigsqcup_{s=1}^i\cal R_s \ \ , \ \forall i\in [d_{k}^{_G}] \ . \]
\end{prop}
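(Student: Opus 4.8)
The plan is to prove the statement case by case, exploiting that for a cominuscule $G/P_k$ the unipotent radical $P_k^u$ is abelian, so $P_k^u$ acts trivially on $\mathfrak{g}/\mathfrak{p}_k\simeq \mathfrak{g}_{-1}$ and the only relevant action is that of the Levi subgroup $L_k$ (equivalently, of its semisimple part $\mathfrak{s}_k$). Thus one reduces to classifying the orbits of a reductive group on the irreducible module $\mathfrak{g}_{-1}$, and these are precisely the modules listed in (the commented-out version of) the table of tangent bundles: $\Lambda^k$ for Grassmannians, $\Sym^2$ for the Lagrangian Grassmannian, $\Lambda^2$ for Spinor varieties, a spin representation of $D_5$ for the Cayley plane, and $V_{\omega_1}^{E_6}$ for $E_7/P_7$. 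For each I would identify $\mathfrak{g}_{-1}$ explicitly as a module over $\mathfrak{s}_k$, recall the well-known orbit stratification — rank of a skew-symmetric matrix for $\Lambda^k\mathbb C^N$ restricted to decomposables (Grassmannian), rank of a symmetric matrix (Lagrangian Grassmannian), "rank" $2r$ of a bivector for the spin case, and the two nontrivial orbits in $V_{\omega_1}^{E_6}$ governed by the cubic invariant — and read off that the number of orbits is $d_k^{_G}+1$ and that they form a chain. The quadric and the two exceptional cases $\OP^2, E_7/P_7$ are already settled (Remark \ref{rmk:exceptional} and \cite{LM01}), so the substance is the three classical families.

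The key technical point is to pin down the maximal orthogonal sequence of long roots $(\beta_1,\dots,\beta_{d_k^{_G}})$ in $\Phi_k^+$ and to exhibit, for each $j$, an explicit representative of $\cal R_j$ as an element of $\mathfrak{g}_{-1}=\bigoplus_{\alpha\in\Phi_k^+}\mathfrak{g}_{-\alpha}$ built from the root vectors $X_{-\beta_1},\dots,X_{-\beta_j}$. Because the $\beta_i$ are pairwise orthogonal, the partial sum $\sum_{i=1}^{j}X_{-\beta_i}$ is a "generic" element of the span of $j$ mutually commuting root spaces, and its $L_k$-orbit closure contains exactly the analogous sums of fewer terms; matching this with the concrete matrix/tensor picture (e.g. in $\Sym^2\mathbb C^N$ the element $\sum_{i\le j} e_i^2$ has rank $j$, and every rank-$j$ symmetric matrix is $\GL_N$-conjugate to it) gives both the count and the total ordering $\overline{\cal R_i}=\{[v_{\omega_k}]\}\sqcup\bigsqcup_{s\le i}\cal R_s$. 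The inclusion $\overline{\cal R_i}\supseteq\cal R_s$ for $s<i$ follows by degeneration (send some of the coordinates to $0$), and the reverse — that nothing else lies in the closure — follows because the orbit-distinguishing invariant (rank) is lower-semicontinuous and takes value exactly $\{0,1,\dots,d_k^{_G}\}$.

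The main obstacle I expect is uniformity: the statement is phrased uniformly across all cominuscule types, but the natural proof is not uniform, since $\mathfrak{g}_{-1}$ is a genuinely different $L_k$-module in each family and the "rank" invariant has a different incarnation each time. In particular, for the Lagrangian Grassmannian one must be careful that the ambient space is $V_{\omega_N}^{C_N}\subsetneq\bigwedge^N\mathbb C^{2N}$, so the identification $\mathfrak{g}_{-1}\simeq\Sym^2 E^+$ with $E^+=\langle e_1,\dots,e_N\rangle$ needs the symplectic form, and for the Spinor varieties the "rank" is the even integer $2\lfloor\text{something}\rfloor$, which is why $d_k^{_G}=\lfloor N/2\rfloor$ rather than $N$; getting these bookkeeping details right (and cross-checking against \autoref{table:secants to cominuscule}) is where the real work lies. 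A secondary point is that one should verify the orbit representatives $\sum_{i\le j}X_{-\beta_i}$ on the tangent side correspond, under the exponential/Cayley identification $T_{[v_{\omega_k}]}(G/P_k)\simeq\mathfrak{g}/\mathfrak{p}_k$, to the secant representatives $[v_{\omega_k}+w_j\cdot v_{\omega_k}]$ of Lemma \ref{lemma:secant orbits}, which is what will make the two chains in the Main Theorem's poset graph compatible — this I would record as a remark or fold into Corollary \ref{cor:tangent orbits} rather than prove here.
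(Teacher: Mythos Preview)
Your approach is essentially the same as the paper's: reduce to the Levi action via abelianness of $P_k^u$, identify $\mathfrak{g}_{-1}$ case-by-case as a space of matrices, and invoke the rank stratification, with the exceptional and quadric cases handled by reference. One slip to fix: for the Grassmannian the tangent module is $\mathbb C^k\otimes\mathbb C^{N-k}$ (rectangular matrices under $\GL_k\times\GL_{N-k}$), not $\Lambda^k\mathbb C^N$ with a skew-symmetric rank, so the invariant there is just the ordinary matrix rank $\le\min\{k,N-k\}$. The extra material on root-vector representatives $\sum_{i\le j}X_{-\beta_i}$ and their matching with the secant side is not part of the paper's proof of this proposition but is consistent with what the paper does afterward around Corollary~\ref{cor:tangent orbits}.
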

\begin{proof}
	The unipotent radical $P_k^u$ being abelian, it acts trivially on $\mathfrak{g}/\mathfrak{p}_k\simeq \mathfrak{p}_k^u$, hence the parabolic action on the tangent space depends on the Levi component only. \\
    \indent The tangent spaces to Grassmannians, Spinor varieties and Lagrangian Grassmannians are the spaces of matrices $\mathfrak{sl}_N/\mathfrak{p}_k\simeq \mathbb C^k \otimes \mathbb C^{N-k}$, $\mathfrak{so}_{2N}/\mathfrak{p}_N\simeq \bigwedge^2\mathbb C^N$ (skew-symmetric) and $\mathfrak{sp}_{2N}/\mathfrak{p}_N\simeq \Sym^2\mathbb C^N$ (symmetric) with the parabolic actions of $\SL_k\times \SL_{N-k}$ and $\SL_N$ respectively. Therefore the orbit stratification is parametrised by the matrix-rank only, and the maximum ranks are respectively $d_{k}^{_{A_N}}=\min\{k,N-k\}$, $d_{N}^{_{D_N}}=\lfloor \frac{N}{2}\rfloor$ and $d_N^{_{C_N}}=N$. \\
	\indent The case of $E_7/P_7$ is already known among the Legendrian varieties (cf. Remark \ref{rmk:exceptional})): the $d_{7}^{_{E_7}}=3$ orbits in $T_{[v_{\omega_7}]}(E_7/P_7)$ are obtained as intersections with the orbits $E_7/P_7$, $\sigma_+\setminus (E_7/P_7)$ and $\tau(E_7/P_7)\setminus \sigma_+$, which are described by matrices of rank $1,2,3$ too. \\
	\indent Finally, the case of the quadric $\Q_{2N-2}$ is trivial, while the secant variety of lines to the Cayley plane $\OP^2$ is defective and $\sigma_2(\OP^2)=\tau(\OP^2)$: in particular, there are $d_{1}^{_{E_6}}=2$ orbits.
\end{proof}

\indent For each $P_k$--orbit $\cal R_j$ in the tangent space $T_{[v_{\omega_k}]}(G/P_k)$ as above, let $\theta_j\in \cal R_j$ be a representative, corresponding to a rank--$j$ matrix in $\mathfrak{g}/\mathfrak{p}_k$. We denote its $G$--orbit in the tangential variety $\tau(G/P_k)$ by
\[ \Theta_j:= G \cdot \mathcal{R}_j \ = \ G\cdot [v_{\omega_k} + \theta_j] \ \ \ \ \ , \ \forall j\in [d_{k}^{_G}] \ . \]
Clearly, it holds $\Theta_j \cap T_{[v_{\omega_k}]}(G/P_k)=\cal R_j$. Each point $[p]\in G/P_k$ with Hamming distance $1$ from $[v_{\omega_k}]$ (i.e. $[p]\in P_kw_1P_k$ in the notation of Proposition \ref{prop:secant orbit cominuscule}) defines a tangent direction in $T_{[v_{\omega_k}]}(G/P_k)$. Then the point $[v_{\omega_k}+p]$ lies both in the tangent orbit $\Theta_{j(p)}$ for a certain $j(p)\in [d_k^{_G}]$, and in $G/P_k$ since the pair $([v_{\omega_k}],[p])$ has Hamming distance $1$. In particular, it follows $\Theta_{j(p)}=G \cdot [v_{\omega_k}+p] \subset G/P_k$, and from the ordering among the orbits in Proposition \ref{prop:tangent orbit cominuscule} one deduces $j(p)=1$ and 
\[ \Theta_1=G/P_k \ \ \ , \ \ \ \mathcal{R}_1= T_{[v_{\omega_k}]}(G/P_k) \cap (G/P_k) \ . \]

\begin{cor}\label{cor:tangent orbits}
    For $G/P_k$ cominuscule, the tangential variety $\tau(G/P_k)$ splits in the $d_k^{_G}$--many $G$--orbits
    \[ \Theta_j \ := \ G \cdot \mathcal{R}_j \ = \ G \cdot [v_{\omega_k}+\theta_j] \ \ \ \ \ , \ \forall j \in [d_k^{_G}]\]
    where $\theta_j\in T_{[v_{\omega_k}]}(G/P_k)$ is a tangent point corresponding to a rank--$j$ matrix in $\mathfrak{g}/\mathfrak{p}_k$. In particular, $\Theta_1=G/P_k$. Moreover, such orbits are totally ordered, i.e.
    \[ \overline{\Theta_j} = \bigsqcup_{i \in [j]}\Theta_i \ \ \ , \ \forall j \in [d_k^{_G}] \ . \]
    \end{cor}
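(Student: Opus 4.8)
The plan is to derive Corollary \ref{cor:tangent orbits} as an almost immediate consequence of Proposition \ref{prop:tangent orbit cominuscule}, exactly as the analogous statement in the secant branch (Lemma \ref{lemma:secant orbits}) was deduced from the parabolic orbit structure on $G/P_k$. The key input is already in place: the $G$--orbits in the tangential variety $\tau(G/P_k)$ are in bijection with the $P_k$--orbits in $T_{[v_{\omega_k}]}(G/P_k)\simeq \mathfrak{g}/\mathfrak{p}_k$, via $\mathcal{O}\mapsto \mathcal{O}\cap T_{[v_{\omega_k}]}(G/P_k)$, and this correspondence carries inclusions of orbit closures to inclusions of orbit closures because the $G$--action conjugates tangent spaces and preserves incidence (cf. Remark \ref{rmk:hamming for G-varieties} and the discussion preceding the Corollary). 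So structurally there is nothing left to prove about the posets: the total order on the $\Theta_j$ is \emph{transported} from the total order on the $\mathcal{R}_j$ established in Proposition \ref{prop:tangent orbit cominuscule}.

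Concretely, I would proceed in three short steps. First, recall that by the homogeneity discussion in Sec.\ \ref{subsec:sec2 for generalised}, every $G$--orbit $\mathcal{O}\subset\tau(G/P_k)$ satisfies $\mathcal{O}=\bigcup_{g\in G} g\cdot(\mathcal{O}\cap T_{[v_{\omega_k}]}(G/P_k))$, and $\mathcal{O}\cap T_{[v_{\omega_k}]}(G/P_k)$ is a single $P_k$--orbit; hence the assignment $\mathcal{R}_j\mapsto \Theta_j:=G\cdot\mathcal{R}_j$ is a bijection between the $P_k$--orbits in the tangent space and the $G$--orbits in $\tau(G/P_k)$, giving exactly $d_k^{_G}+1$ orbits, one of which is the cone point $\{[v_{\omega_k}]\}$ — but projectively this is absorbed, since $[v_{\omega_k}]$ itself represents the point of $G/P_k$, so one genuinely has $d_k^{_G}$ orbits in $\tau(G/P_k)$ as a projective variety. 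Second, identify $\Theta_1=G/P_k$: a tangent direction coming from a point $[p]\in G/P_k$ at Hamming distance $1$ from $[v_{\omega_k}]$ yields $[v_{\omega_k}+p]\in G/P_k$ (the line through them lies in $G/P_k$), and such $[p]$ corresponds to a rank--$1$ element of $\mathfrak{g}/\mathfrak{p}_k$; by the total ordering in Proposition \ref{prop:tangent orbit cominuscule}, the rank--$1$ orbit is the minimal one, forcing $\Theta_1=G/P_k$. Third, transport the total order: if $\overline{\mathcal{R}_i}=\{[v_{\omega_k}]\}\sqcup\bigsqcup_{s\le i}\mathcal{R}_s$ inside the tangent space, then applying $G$ and taking Zariski closure (which commutes with the $G$--action since $G$ is an algebraic group acting morphically) gives $\overline{\Theta_i}=\overline{G\cdot\mathcal{R}_i}=G\cdot\overline{\mathcal{R}_i}=\bigsqcup_{s\le i}\Theta_s$, using $G\cdot\{[v_{\omega_k}]\}=G/P_k=\Theta_1$.

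**The only genuine subtlety**, and the step I would be most careful about, is the claim $\overline{G\cdot\mathcal{R}_i}=G\cdot\overline{\mathcal{R}_i}$ together with the matching of strata: a priori one only has $G\cdot\overline{\mathcal{R}_i}\subseteq\overline{G\cdot\mathcal{R}_i}$ and the reverse needs that $G\cdot\overline{\mathcal{R}_i}$ is already closed. This follows because $G\cdot\overline{\mathcal{R}_i}=\bigcup_{[x]\in G/P_k}\big(\overline{\mathcal{R}_i}\text{ translated into }T_{[x]}(G/P_k)\big)$ is the image of the projective (hence proper) bundle $\{([x],f): f\in (\text{translate of }\overline{\mathcal{R}_i})\subset T_{[x]}(G/P_k)\}$ over $G/P_k$ under the evaluation map to $\mathbb{P}(V_{\omega_k}^G)$, and the image of a complete variety under a morphism is closed. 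Alternatively, one invokes that orbit closures of an algebraic group action are unions of orbits and that the finitely many orbits $\Theta_1,\dots,\Theta_{d_k^{_G}}$ are linearly ordered by closure because their tangent-space slices are — any other relation would contradict the bijection with $P_k$--orbits. Once this is granted, the poset structure in the tangential branch is identical to that of Proposition \ref{prop:tangent orbit cominuscule}, and the Corollary follows. The exceptional cases $\OP^2$ and $E_7/P_7$, and the trivial case of quadrics, are already settled (Remark \ref{rmk:exceptional}, Proposition \ref{prop:tangent orbit cominuscule}), so no separate argument is needed for them.
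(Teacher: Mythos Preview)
Your proposal is correct and follows the same approach the paper takes: the corollary is stated without a separate proof, being an immediate consequence of Proposition \ref{prop:tangent orbit cominuscule} together with the bijection $\tau(G/P_k)/G \leftrightarrow (\mathfrak{g}/\mathfrak{p}_k)/P_k$ from Sec.\ \ref{subsec:sec2 for generalised} and the paragraph preceding the corollary identifying $\Theta_1=G/P_k$. Your additional care with the closure transport $\overline{G\cdot\mathcal{R}_i}=G\cdot\overline{\mathcal{R}_i}$ via properness is a detail the paper leaves implicit.
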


\begin{es}[Lagrangian Grassmannians]\label{es:tangent_Lagrangian}
    We assume the setting for $\LG_{N}$ as in the proof of Lemma \ref{lemma:secant orbits}. The tangent space at a point $[W]\in \LG_{N}$ is isomorphic to the space $\Sym^2W^\vee$ (indeed, the tangent bundle on $\LG_{N}$ is $\Sym^2\cal U^\vee$ for $\mathcal U$ being the pullback of the universal bundle on the Grassmannian $\Gr(N,2N)$). The isomorphism is given by restriction of the one on Grassmannians $T_{[W]}\Gr(N,2N)=\bigwedge^{N-1}W\wedge \mathbb C^{2N} \simeq W^\vee \otimes (\mathbb C^{2N}/W)$: the additional condition of isotropicity $W=W^\perp$ implies $\mathbb C^{2N}/W\simeq W^\vee$, and the right-hand side restricts to $\Sym^2W^\vee \subset W^\vee \otimes W^\vee$. For instance, in the tangent space $T_{[v_{\omega_N}]}\LG_{N} \subseteq \bigwedge^{N-1}E^+\wedge \mathbb C^{2N}$ at the highest weight vector $[v_{\omega_N}]=[\bold{e}_{[N]}]=[E^+]$, for any $j\in [d_N^{_{\Sp_{2N}}}]=[N]$ the rank--$j$ symmetric matrix $\diag(\bold{Id}_j, \bold{0}_{N-j})\in \Sym^2(E^+)^\vee$ corresponds to the point 
    \[ \theta_j := \sum_{i\in [j]} (-1)^{i+1}\bold{e}_{[N]\setminus \{i\}}\wedge e_{-i} \ \in \ T_{[\bold{e}_{{[N]}}]}\LG_{N} \ . \]
    In particular, the point $[\theta_j]$ is a representatives of the $\Sp_{2N}$--orbit $\Theta_j$ in the tangential variety. Note that for $j=1$ one gets the point $[\theta_1]=[e_{-1}\wedge \bold{e}_{[N]\setminus \{1\}}]$ lying in $\LG_{N}$ itself, and
    \[ \mathcal{R}_1 = T_{\bold{e}_{[N]}}\LG_{N} \cap \LG_{N} \simeq \nu_2(\mathbb P^{N-1}) \ . \]
    \end{es}

\subsection{Poset in $\sigma_2(G/P_k)$}

Now that we have the (totally ordered) orbit posets of the dense subset $\sigma_2^\circ(G/P_k)$ and of the tangential variety $\tau(G/P_k)$, we are left with relating the secant and the tangent orbits each other. A first relation we obtain is $G/P_k = \Sigma_1 = \Theta_1$.\\
\indent Note that any point $q \in \cal R_2$ corresponds to a rank--$2$ matrix, hence it is of the form $q=p_1+p_2$ for $p_1,p_2\in \cal R_1$ (corresponding to) rank--$1$ matrices. In particular, from the equality $\Theta_1=G/P_k$, both points $[v_{\omega_k}+p_1]$ and $[v_{\omega_k}+p_2]$ lie in $G/P_k$, hence the point $[v_{\omega_k}+q]=[(\frac{1}{2}v_{\omega_k}+p_1)+(\frac{1}{2}v_{\omega_k}+p_2)]$ lies on the bisecant line $L([\frac{1}{2}v_{\omega_k}+p_1],[\frac{1}{2}v_{\omega_k}+p_2])$ whose defining pair $([\frac{1}{2}v_{\omega_k}+p_1],[\frac{1}{2}v_{\omega_k}+p_2])$ has Hamming distance $2$ (since the two points are linked by the two lines passing through $[v_{\omega_k}]$). We conclude that $\Sigma_2 = \Theta_2$.\\
\indent The next result gives a complete description of the partially ordered set of $G$--orbits in $\sigma_2(G/P_k)$ for any $G/P_k$ cominuscule variety, proving Conjecture 6.1.3 in \cite{galganothesis}.

\begin{teo}\label{thm:poset cominuscule}
	Let $G/P_k$ be a cominuscule variety and let $d_k^{_G}$ be as in \autoref{table:secants to cominuscule}. The secant variety of lines $\sigma_2(G/P_k)$ splits in the finitely many $G$--orbits $\Sigma_j$ and $\Theta_j$ as defined in Lemma \ref{lemma:secant orbits} and Corollary \ref{cor:tangent orbits}, for $j \in [d_k^{_G}]$. Their poset is described by the graph in \autoref{figure:graph cominuscule}, where arrows denote the inclusions among orbit closures.
    \begin{figure}[H]
	\begin{center}
		{\small \begin{tikzpicture}[scale=1.8]
			
			\node(S) at (0,0.2){{$G/P_k$}};
			\node(t2) at (0,0.7){{$\Theta_2=\Sigma_2$}};
			\node(t3) at (-0.6,1.1){{$\Theta_3$}};
			\node(t) at (-0.6,1.6){{$\vdots$}};
			\node(td) at (-0.6,2.1){$\Theta_{d_{k}^{_G}}$};
			
			\node(s3) at (0.6,1.3){{$\Sigma_3$}};
			\node(s) at (0.6,1.8){{$\vdots$}};
			\node(sd) at (0.6,2.3){{$\Sigma_{d_{k}^{_G}}$}};
			
			\path[font=\scriptsize,>= angle 90]
			(S) edge [->] node [left] {} (t2)
			(t2) edge [->] node [left] {} (t3)
			(t3) edge [->] node [left] {} (s3)
			(t3) edge [->] node [left] {} (t)
			(t) edge [->] node [left] {} (td)
			(t) edge [->] node [left] {} (s)
			(td) edge [->] node [left] {} (sd)
			(s3) edge [->] node [left] {} (s)
			(s) edge [->] node [left] {} (sd);
			\end{tikzpicture}}
	\end{center}
	\caption{Poset graph of $G$--orbits in $\sigma_2(G/P_k)$ for $G/P_k$ cominuscule.}
	\label{figure:graph cominuscule}
\end{figure}
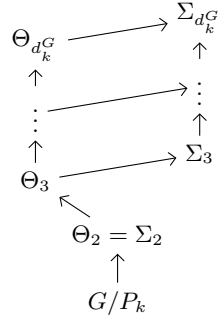
\end{teo}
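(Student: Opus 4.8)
The plan is to reduce the theorem to two facts and then assemble the poset. By Lemma~\ref{lemma:secant orbits} and Corollary~\ref{cor:tangent orbits} the dense subset $\sigma_2^\circ(G/P_k)$ and the tangential variety $\tau(G/P_k)$ are already known to split into the totally ordered chains $\Sigma_1\le\dots\le\Sigma_{d_{k}^{_G}}$ and $\Theta_1\le\dots\le\Theta_{d_{k}^{_G}}$ of orbit closures, and the paragraphs preceding the statement show $\Sigma_1=\Theta_1=G/P_k$ and $\Sigma_2=\Theta_2$. Moreover $\sigma_2(G/P_k)=\sigma_2^\circ(G/P_k)\cup\tau(G/P_k)$ (a limit of bisecant lines is a bisecant or a tangent line), so every $G$--orbit of $\sigma_2(G/P_k)$ is one of the $\Sigma_j$ or $\Theta_j$. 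Hence the theorem amounts to: (i) the "horizontal" inclusions $\Theta_j\subseteq\overline{\Sigma_j}$ for $3\le j\le d_{k}^{_G}$, and (ii) the absence of any further identification or inclusion among orbit closures. I would dispatch the exceptional varieties $\OP^2$, $E_7/P_7$ and the quadrics at once via Remark~\ref{rmk:exceptional}, and treat the remaining (classical) cominuscule varieties uniformly.

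For (i) I would realise each tangent orbit as a degeneration of bisecant orbits of the same index, using the maximal orthogonal sequence of long roots $(\beta_1,\dots,\beta_{d_{k}^{_G}})\subset\Phi_k^+$ of Proposition~\ref{prop:secant orbit cominuscule} and root vectors $f_{\beta_i}\in\mathfrak g_{-\beta_i}$. Since $G/P_k$ is cominuscule one has $\mathfrak g=\mathfrak g_{-1}\oplus\mathfrak g_0\oplus\mathfrak g_1$, so $[\mathfrak g_{-\beta_i},\mathfrak g_{-\beta_{i'}}]\subseteq\mathfrak g_{-2}=0$ for $i\ne i'$ and $f_{\beta_i}^2 v_{\omega_k}=0$ (as each $\beta_i$ is long in $\Phi_k^+$, the $\beta_i$--string through $\omega_k$ has length two); consequently $u_j(t):=\prod_{i=1}^j\exp(t f_{\beta_i})$ satisfies $u_j(t)\cdot v_{\omega_k}=\sum_{S\subseteq[j]}t^{|S|}\bigl(\prod_{i\in S}f_{\beta_i}\bigr)v_{\omega_k}$, whose degree-$1$ term is a rank-$j$ representative $\theta_j$ of $\mathcal R_j$ and whose top term is a nonzero multiple of $w_j\cdot v_{\omega_k}$ (of weight $\omega_k-\sum_i\beta_i=w_j(\omega_k)$). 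The bisecant points $p_j(t):=\bigl[(1-t^{-1})v_{\omega_k}+t^{-1}u_j(t)\cdot v_{\omega_k}\bigr]=[v_{\omega_k}+\theta_j+t(\cdots)]$ then tend to $[v_{\omega_k}+\theta_j]\in\Theta_j$ as $t\to0$, while for generic $t$ they lie in $\Sigma_j$: indeed the pair $([v_{\omega_k}],[u_j(t)\cdot v_{\omega_k}])$ has Hamming distance $\le j$ (both points lie on the rational curve $t\mapsto[u_j(t)\cdot v_{\omega_k}]$, which has degree $j$; cf.~\cite[Lemma~4.2]{buch2013}) and $\ge j$ (its limit as $t\to\infty$ is the distance-$j$ pair $([v_{\omega_k}],[w_j\cdot v_{\omega_k}])$, so by the total ordering of the orbits of $(G/P_k)\times(G/P_k)$ in Proposition~\ref{prop:secant orbit cominuscule} the generic pair on this curve is at least distance $j$). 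Passing to the limit $t\to0$ and using $G$--invariance of $\overline{\Sigma_j}$ gives $\Theta_j\subseteq\overline{\Sigma_j}$. (Alternatively, this step can be carried out case by case from the matrix-rank models of $\mathfrak g/\mathfrak p_k$, as in \cite{galganostaffolani2022grass, galgano2023spinor} for Grassmannians and Spinor varieties.)

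For (ii) the crucial point is that no $\Sigma_j$ with $j\ge3$ meets $\tau(G/P_k)$, i.e.\ $\sigma_2^\circ(G/P_k)\cap\tau(G/P_k)=\Sigma_1\sqcup\Sigma_2$. I would argue this directly: a point $p\notin G/P_k$ lying simultaneously on a bisecant line $L(y,z)$ (with $y\ne z$ in $G/P_k$) and on the line $\langle 2x\rangle$ spanned by a length-$2$ scheme $2x\subset G/P_k$ spans a plane $\Pi$ containing both lines; neither line lies on $G/P_k$, and since $G/P_k$ is cut out by quadrics, $\Pi\cap(G/P_k)$ is contained in a conic of $\Pi$, whence a short case analysis shows $y$ and $z$ are joined inside $G/P_k$ by a conic or by a pair of lines of $G/P_k$, so $d(y,z)\le2$ and $p\in\Sigma_1\sqcup\Sigma_2$. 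Consequently, for $j\ge3$ the orbit $\Sigma_j$ is disjoint from $\tau(G/P_k)$, hence differs from every $\Theta_{j'}$ and is not contained in any $\overline{\Theta_{j'}}\subseteq\tau(G/P_k)$; together with the strictness of the two chains this shows the $2d_{k}^{_G}-2$ orbits $\Sigma_1{=}\Theta_1$, $\Sigma_2{=}\Theta_2$, $\Theta_3,\Sigma_3,\dots,\Theta_{d_{k}^{_G}},\Sigma_{d_{k}^{_G}}$ are pairwise distinct. Finally I would close the poset by combining (i), the chains, the identity $\overline{\Theta_j}=\bigsqcup_{i\le j}\Theta_i$, and the orbit-dimension computations performed for the later tables: these rule out $\Theta_{j'}\subseteq\overline{\Sigma_j}$ for $j'>j$ (the only borderline case $j'=d_{k}^{_G}$, $j=d_{k}^{_G}-1$ being handled by irreducibility of $\tau(G/P_k)=\overline{\Theta_{d_{k}^{_G}}}$ together with (ii)), leaving exactly the covering relations $G/P_k\lessdot\Sigma_2{=}\Theta_2$, $\Theta_j\lessdot\Theta_{j+1}$, $\Sigma_j\lessdot\Sigma_{j+1}$, $\Theta_j\lessdot\Sigma_j$ of Figure~\ref{figure:graph cominuscule}, with $\Sigma_j$ and $\Theta_{j+1}$ incomparable. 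This in particular settles Conjecture~6.1.3 of \cite{galganothesis}.

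I expect the main obstacle to be the uniform proof of (i) — specifically, the assertion that the degenerating bisecant pairs have Hamming distance exactly $j$, equivalently that the rank-$j$ tangent direction $\theta_j$ is witnessed by a family of bisecant lines of displacement exactly $j$. This is transparent in the matrix models (Grassmannians, Lagrangian and Spinor varieties), where the rank of a matrix governs both the secant and the tangent stratifications simultaneously, but a genuinely representation-theoretic treatment requires some care with the cominuscule $\mathbb Z$--grading and with the total ordering of parabolic orbits in $(G/P_k)\times(G/P_k)$; the exceptional cases and the quadrics are handled separately through Remark~\ref{rmk:exceptional}.
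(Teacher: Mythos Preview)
Your approach differs substantially from the paper's. The paper does not attempt a uniform argument: it quotes \cite{galganostaffolani2022grass,galgano2023spinor} for Grassmannians and Spinor varieties, disposes of quadrics and the two exceptional cases via Remark~\ref{rmk:exceptional}, and then handles the sole remaining case $\LG(N,2N)$ by realising it as the linear section of $\Gr(N,2N)$ by $\mathbb P(V_{\omega_N}^{C_N})$ (using that the pullback of $\cal O_{\Gr}(1)$ is $\cal O_{\LG}(1)$, cf.\ Remark~\ref{rmk:pullback O(1)}), so that each orbit $\Sigma_\ell^{\LG}$, $\Theta_\ell^{\LG}$ is the section of the corresponding orbit for $\Gr(N,2N)$ and the poset is inherited verbatim. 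The paper explicitly remarks after its proof that a representation-theoretic argument for the horizontal inclusions would be desirable but is not given; your part~(i) is precisely an attempt at such an argument, and the degeneration via $u_j(t)=\prod_{i=1}^j\exp(tf_{\beta_i})$ is a good idea that exploits $\mathfrak g_{-2}=0$ cleanly.

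There is, however, a genuine gap in your part~(ii). From ``$G/P_k$ is cut out by quadrics'' you only obtain that $\Pi\cap(G/P_k)$ is \emph{contained} in some conic of $\Pi$, not that it \emph{equals} a conic lying on $G/P_k$. The scheme $\Pi\cap(G/P_k)$ is cut out in $\Pi\simeq\mathbb P^2$ by the restricted linear system of defining quadrics, all of which pass through the length-$4$ subscheme $\{y,z,2x\}$; two generic conics through that subscheme meet in exactly those four points (with multiplicity), so nothing in your argument prevents $\Pi\cap(G/P_k)$ from being the $0$--dimensional scheme $\{y,z,2x\}$ itself. In that case no conic and no pair of lines of $G/P_k$ joins $y$ to $z$, and you cannot conclude $d(y,z)\le2$. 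In the references the statement $\Sigma_j\cap\tau(G/P_k)=\emptyset$ for $j\ge3$ is established via orbit invariants (e.g.\ the kernel $H_p$ of the contraction map \eqref{eq:kernels} in the Grassmannian and Lagrangian cases), not by a quadric-section argument; your ``short case analysis'' would have to produce a substitute for this, and as written it does not. The remaining assembly of the poset (via dimensions and irreducibility of $\tau(G/P_k)$) is fine once (ii) is secured.
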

\begin{proof}
	For the proof in the case of Grassmannians and Spinor varieties we refer to \cite{galganostaffolani2022grass, galgano2023spinor} respectively. Moreover, quadrics, the Cayley plane $\OP^2$ and $E_7/P_7$ have been settled in Remark \ref{rmk:exceptional}. The only missing case is the Lagrangian Grassmannian $\LG_{N}$.\\
    \indent Consider $\LG_{N}$ in its minimal embedding $\mathbb P(V_{\omega_N}^{C_N})\subset \mathbb P(\bigwedge^N\mathbb C^N)$, and look at it inside the Grassmannian $\Gr(N,2N)\subset \mathbb P(\bigwedge^N\mathbb C^{2N})$: more precisely, $\LG_{N}$ is obtained as linear section of $\Gr(N,2N)$ cutted by $\mathbb P(V_{\omega_N}^{C_N})$. Note that the number of orbits in each branch (the tangential and the bisecant ones) for the two varieties coincide, namely $d_{N}^{_{\SL_{2N}}}=d_{N}^{_{\Sp_{2N}}}=N$.
    \indent Thus the orbit graph for $\LG_{N}$ follows straightforwardly from the one for $\Gr(N,2N)$ by observing that the orbits in $\sigma_2(\LG_{N})$ are obtained by cutting the orbits in $\sigma_2(\Gr(N,2N))$ by $\mathbb P(V_{\omega_N}^{C_N})$, that is
    \[ \Sigma_\ell^{{\Gr(N,2N)}} \cap \mathbb P(V_{\omega_N}^{C_N}) = \Sigma_\ell^{{\LG_{N}}} \ \ \ , \ \ \ \Theta_\ell^{{\Gr(N,2N)}}\cap \mathbb P(V_{\omega_N}^{C_N})=\Theta_\ell^{{\LG_{N}}} \ .\]
\end{proof}

Note that the orbits $\Theta_{d_k^{_G}}$ and $\Sigma_{d_k^{_G}}$ are the dense orbits in the tangential variety $\tau(G/P_k)$ and in the secant variety of lines $\sigma_2(G/P_k)$ respectively.

\begin{obs}\label{rmk:pullback O(1)}
    The reason why the above inductive argument (from Grassmannians) applies to Lagrangian Grassmannians but not to Spinor varieties, is that the pullback of the very ample line bundle $\cal O_{\Gr(N,2N)}(1)$ onto $\LG_{N}$ is $\cal O_{\LG_{N}}(1)$, while the pullback onto $\mathbb S_N$ is $\cal O_{\mathbb S_N}(2)$ (equivalently, $\cal O_{\mathbb S_N}(1)$ is the \textquotedblleft square root \textquotedblright \space of $\cal O_{\Gr(N,2N)}(1)$).
    \end{obs}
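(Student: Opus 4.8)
The plan is to reduce the assertion to a single weight computation. Since $\LG(N,2N)=\Sp_{2N}/P_N$, $\mathbb S_N=\Spin_{2N}/P_N$ and $\Gr(N,2N)=\SL_{2N}/P_N$ are all generalised Grassmannians attached to a \emph{maximal} parabolic, each has Picard group free of rank one, generated by the minimal very ample bundle $\cal O(1)$ that Borel--Weil associates to the respective fundamental weight $\omega_N$. Hence, for either inclusion $\iota\colon G/P_N\hookrightarrow \Gr(N,2N)$, the pullback $\iota^*\cal O_{\Gr(N,2N)}(1)$ is a positive multiple $\cal O(m)$, and it suffices to pin down $m$. I would do this representation-theoretically: $\cal O_{\Gr(N,2N)}(1)$ corresponds to the fundamental weight $\omega_N^{A_{2N-1}}$ of $\SL_{2N}$, so $\iota^*\cal O_{\Gr(N,2N)}(1)$ corresponds to the restriction of $\omega_N^{A_{2N-1}}$ to the maximal torus of $G$, and $m$ is read off by comparing that restriction with the generator $\omega_N^{G}$.

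For the Lagrangian Grassmannian the inclusion comes from $\Sp_{2N}\hookrightarrow \SL_{2N}$ via the defining representation $\mathbb C^{2N}$, and the base point is the Pl\"ucker coordinate $[\bold e_{[N]}]$ of $E^+$ from the proof of Lemma \ref{lemma:secant orbits}; its weight is $e_1+\cdots+e_N$, which in the $C_N$ conventions $\omega_i^{C_N}=e_1+\cdots+e_i$ is exactly $\omega_N^{C_N}$. Thus $m=1$ and $\iota^*\cal O_{\Gr(N,2N)}(1)=\cal O_{\LG(N,2N)}(1)$, so the minimal embedding of $\LG(N,2N)$ is a genuine linear section of the Pl\"ucker embedding (indeed $\LG(N,2N)=\Gr(N,2N)\cap \mathbb P(V_{\omega_N}^{C_N})$, the primitive part being the kernel of contraction by the symplectic form). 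For the Spinor variety the inclusion comes from $\Spin_{2N}\to \SL_{2N}$ via the vector representation, $\mathbb S_N$ being one family of maximal isotropics inside $\OG(N,2N)\subset \Gr(N,2N)$. The base point again has Pl\"ucker weight $e_1+\cdots+e_N$, but now it must be read in type $D_N$, where $\omega_N^{D_N}=\tfrac12(e_1+\cdots+e_N)$; hence the same weight equals $2\,\omega_N^{D_N}$, giving $m=2$ and $\iota^*\cal O_{\Gr(N,2N)}(1)=\cal O_{\mathbb S_N}(2)$. This is precisely the ``square root'' phenomenon: $\cal O_{\mathbb S_N}(1)$ is a square root of the Pl\"ucker polarisation.

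Finally I would record the geometric consequence that legitimises the inductive step of Theorem \ref{thm:poset cominuscule} for $\LG$ and forbids it for $\mathbb S_N$. Because $\iota^*\cal O_{\Gr}(1)=\cal O_{\LG}(1)$, the subspace $\mathbb P(V_{\omega_N}^{C_N})$ is \emph{linear} and cuts $\Gr(N,2N)$ exactly in $\LG(N,2N)$; intersecting it with the $\SL_{2N}$--orbit closures in $\sigma_2(\Gr(N,2N))$ therefore returns $\Sp_{2N}$--orbit closures in $\sigma_2(\LG(N,2N))$, as used in the theorem. By contrast $\iota^*\cal O_{\Gr}(1)=\cal O_{\mathbb S_N}(2)$ means the Pl\"ucker embedding restricts on $\mathbb S_N$ to the degree-two system $|\cal O_{\mathbb S_N}(2)|$, so $\mathbb S_N$ is a quadratically (square-root) embedded copy rather than a linear section of $\Gr(N,2N)$, and the orbit-cutting argument has no analogue—hence the separate treatment in \cite{galgano2023spinor}.

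The main obstacle is purely the bookkeeping of conventions: one must ensure that the \emph{same} Pl\"ucker weight $e_1+\cdots+e_N$ is correctly interpreted as $\omega_N^{C_N}$ in type $C_N$ but as $2\,\omega_N^{D_N}$ in type $D_N$, the half-integrality of the spin fundamental weight being the entire source of the discrepancy. Once this normalisation is fixed, the rank-one Picard group immediately upgrades the weight identity to the stated equality of line bundles.
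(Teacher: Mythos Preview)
The paper states this as a remark without proof, treating it as a standard fact; there is nothing in the paper to compare your argument against. Your justification is correct and is the natural one: since all three varieties are generalised Grassmannians for a maximal parabolic, $\Pic\simeq\mathbb Z$ is generated by the line bundle associated to the relevant fundamental weight, and the pullback multiplier is read off by restricting the Pl\"ucker highest weight $e_1+\cdots+e_N$ to the subtorus. In the Bourbaki conventions (which the paper adopts, cf.\ the reference to \cite{bourbaki}) one has $\omega_N^{C_N}=e_1+\cdots+e_N$ but $\omega_N^{D_N}=\tfrac12(e_1+\cdots+e_N)$, whence $m=1$ for $\LG(N,2N)$ and $m=2$ for $\mathbb S_N$. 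Your concluding paragraph correctly articulates why this matters for the inductive argument of Theorem \ref{thm:poset cominuscule}: only when $m=1$ is the minimal embedding of the subvariety obtained as a \emph{linear} section of the Pl\"ucker space, so that intersecting $\SL_{2N}$--orbit closures with $\mathbb P(V_{\omega_N}^{C_N})$ produces the $\Sp_{2N}$--orbit closures.
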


\begin{obs*}
    The above proof just relies on a by-hand analysis of the Lagrangian case and on the results and arguments from \cite{galganostaffolani2022grass}. However, although conclusive, it doesn't highlight where such common behaviour among all cominuscule varieties comes from. It would be interesting to find a more general representation-theoretic proof of the inclusions among tangent and bisecant orbits (the horizontal arrows in the orbit graph). 
    \end{obs*}

\section{Identifiability, singular locus and Terracini locus in the cominuscule case}\label{sec:identif_sing}

In this section we show that the secant varieties of lines to cominuscule varieties share also the same identifiable, singular and Terracini loci, completing the proof of our Main Theorem. We assume terminology and notation from Sec.\ \ref{subsec:secant}.

\subsection{Identifiability}\label{subsec:ident cominuscule}

We treat each cominuscule variety separately, recalling the already known cases and completing the missing ones. \\
\indent {\em Quadrics and Cayley plane.} From Remark \ref{rmk:defectivity-unidentifiability}, we know that all points in $\sigma_2(\Q_m)\setminus \Q_m$ and $\sigma_2(\OP^2)\setminus \OP^2$ are unidentifiable and their decomposition loci have dimensions $m$ and 8 respectively. Observe that both complements coincides with the second-to-minimal orbit $\Sigma_2$ in the corresponding secant variety. \\
\indent {\em $E_7/P_7$.} The problem of identifiability in $\sigma_2(E_7/P_7)$ (together with the other Legendrian varieties $\LG_3$, $\Gr(3,6)$ and $\mathbb S_6$) is solved in \cite[Proposition 5.11]{LM01}: the points in $\Sigma_2$ (in their notation, $\sigma_+\setminus X$) are unidentifiable, the ones in $\Theta_3$ (in their notation, $\tau(X)\setminus \sigma_+$) are tangential-identifiable, and the ones in $\Sigma_3$ (i.e. $\sigma_2(X)\setminus \tau(X)$) are identifiable. Moreover, the decomposition locus of each point in $\Sigma_2$ has dimension $10$ ($m+2$ for $m=8$ in their notation, cf. second proof to \cite[Proposition\ 5.11]{LM01}). \\
\indent {\em Grassmannians and Spinor varieties.} The identifiable loci in $\sigma_2(\Gr(k,N))$ and $\sigma_2(\mathbb S_N)$ are determined in \cite[Secc.\ 4-5]{galganostaffolani2022grass} and \cite[Secc.\ 4-5]{galgano2023spinor} respectively, accordingly to the second statement in our Main Theorem. Observe that secant varieties of lines to $\Gr(2,N)$, $\mathbb S_4$ and $\mathbb S_5$ either are defective or overfill their ambient space, with defects $4$ for Grassmannians and $6$ for Spinor varieties. Moreover, such defectivity is the cause of the unidentifiability of points in the second-to-minimal $\Sigma_2$ in any $\sigma_2(\Gr(k,N))$ and $\sigma_2(\mathbb S_N)$.\\
\indent {\em Lagrangian Grassmannians.} The Lagrangian Grassmannian $\LG_2\subset \mathbb P(V^{\Sp_4}_{\omega_2})$ is a $3$--dimensional quadric in $\mathbb P(V^{\Sp_4}_{\omega_2})\simeq \mathbb P^4$, thus points in $\sigma_2(\LG_2)\setminus \LG_2$ are unidentifiable with $3$--dimensional decomposition loci. On the other hand, as a Legendrian variety, the case $\LG_3$ behaves identically to $E_7/P_7$. The following result covers all remaining cases $N\geq 4$ and it is a quite straightforward consequence of what happens for Grassmannians.

\begin{prop}\label{prop:identifiability LG}
    For any $N \geq 2$, the unidentifiable locus of the secant variety of lines $\sigma_2(\LG_{N})$ to the Lagrangian Grassmannian $\LG_{N}$ coincides with the second-to-minimal $\Sp_{2N}$--orbit $\Sigma_2$, as defined in Lemma \ref{lemma:secant orbits}. In particular, the decomposition locus of every point in $\Sigma_2$ has dimension $3$.
    \end{prop}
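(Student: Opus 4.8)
The plan is to deduce identifiability for $\LG(N,2N)$ from the known result for the ambient Grassmannian $\Gr(N,2N)$, exploiting the linear-section structure already used in the proof of Theorem \ref{thm:poset cominuscule}. Recall that $\LG(N,2N)=\Gr(N,2N)\cap \mathbb P(V_{\omega_N}^{C_N})$ and, by that theorem, the $\Sp_{2N}$--orbits in $\sigma_2(\LG(N,2N))$ are exactly the traces on $\mathbb P(V_{\omega_N}^{C_N})$ of the $\SL_{2N}$--orbits $\Sigma_\ell^{\Gr(N,2N)}$ and $\Theta_\ell^{\Gr(N,2N)}$ in $\sigma_2(\Gr(N,2N))$. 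So I would first fix a representative $\bold{x}_2$ of $\Sigma_2^{\LG}$, e.g. the one exhibited in the proof of Lemma \ref{lemma:secant orbits}, $\bold{x}_2=[\bold{e}_{[N]}+\bold{e}_{[N]\setminus[2]}\wedge\bold{e}_{-[2]}]$, whose associated subspace $H_{\bold{x}_2}=\langle e_3,\dots,e_N\rangle$ has codimension $2$.

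Next I would establish the \emph{unidentifiability of all of $\Sigma_2$}. Since $\Sigma_2=\Theta_2$ by the discussion preceding Theorem \ref{thm:poset cominuscule}, every point of $\Sigma_2$ lies on a tangent line; it suffices to show it also lies on a bisecant line — or more strongly, to exhibit a positive-dimensional family of decompositions. Here I would use that $\bold{x}_2$, viewed as a skew-symmetric tensor of rank $2$ inside $\bigwedge^N\mathbb C^{2N}$, decomposes through the pencil of Lagrangian planes through the coordinate subspaces: writing $\bold{x}_2=[\bold{e}_{[N]\setminus[2]}\wedge(\bold{e}_{-[2]}+\bold{e}_{[2]})]$ and noting that $\bold{e}_{-[2]}+\bold{e}_{[2]}\in\bigwedge^2\langle e_1,e_2,e_{-1},e_{-2}\rangle$ is a decomposable bivector up to the choice of a Lagrangian plane in the rank-$4$ symplectic space $\langle e_{\pm1},e_{\pm2}\rangle$, one gets a family of bisecant decompositions parametrised by $\LG(2,4)$, hence of dimension $3$. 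I would then argue this family is the \emph{full} decomposition locus: any length-$2$ subscheme of $\LG(N,2N)$ spanning $\bold{x}_2$ must have both points lying in the common kernel-type constraint forcing the supporting Lagrangian planes to contain $\langle e_3,\dots,e_N\rangle$ and to be contained in $\langle e_3,\dots,e_N\rangle^{\perp_{\boldsymbol\omega}}$, reducing the problem to the rank-$4$ case $\LG(2,4)$, where the decomposition locus of the unique non-trivial orbit point is exactly the $3$--dimensional quadric $\LG(2,4)$ itself (as noted for $\LG(2,4)$ just above). This gives $\dim =3$ uniformly.

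Finally I would prove \emph{identifiability of all higher orbits}, i.e. of every point in $\Sigma_j,\Theta_j$ for $j\geq 3$. For this the cleanest route is: a point $f\in\sigma_2(\LG(N,2N))\subset\sigma_2(\Gr(N,2N))$ has the same $\Gr$--decomposition locus whether computed in $\Gr(N,2N)$ or in $\LG(N,2N)$, because any length-$2$ subscheme of $\Gr(N,2N)$ spanning $f$ and contained in $\mathbb P(\bigwedge^N\mathbb C^{2N})$ automatically lands in $\Gr(N,2N)\cap\mathbb P(V_{\omega_N}^{C_N})=\LG(N,2N)$ — this is exactly the linear-section trick from Theorem \ref{thm:poset cominuscule}. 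By \cite[Secc.\ 4-5]{galganostaffolani2022grass} the $\Gr(N,2N)$--decomposition locus of any point outside $\Sigma_2^{\Gr}$ is a single (reduced or non-reduced) length-$2$ subscheme; intersecting with $\mathbb P(V_{\omega_N}^{C_N})$ leaves it unchanged, so $f$ is identifiable (or tangential-identifiable) as a point of $\LG(N,2N)$ too. Combined with the previous paragraph, the unidentifiable locus is precisely $\Sigma_2$, with $3$--dimensional decomposition loci. The main obstacle I anticipate is the rigorous reduction in the second paragraph — showing that \emph{no} decomposition of $\bold{x}_2$ escapes the rank-$4$ subspace $\langle e_{\pm1},e_{\pm2}\rangle$ modulo the fixed common part $\langle e_3,\dots,e_N\rangle$; this should follow from the kernel map $\psi_q$ of \eqref{eq:kernels} together with $H_{g\cdot q}=gH_qg^{-1}$, but it requires a short careful argument rather than a one-liner.
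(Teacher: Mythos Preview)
Your approach coincides with the paper's: deduce identifiability for the orbits $\Sigma_\ell,\Theta_\ell$ with $\ell\geq 3$ from the known result for $\Gr(N,2N)$ via the linear-section structure, and handle $\Sigma_2$ by reducing to $\LG(2,4)\simeq\Q_3$.

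One correction is needed. Your justification ``any length-$2$ subscheme of $\Gr(N,2N)$ spanning $f$ \ldots\ automatically lands in $\LG(N,2N)$'' is false in general --- the $\Sigma_2$ case itself witnesses this, since the $\Gr$--decomposition locus of such a point is $4$--dimensional while the $\LG$--one is $3$--dimensional, so there exist $\Gr$--decompositions $\{x,y\}$ with $x,y\notin\mathbb P(V_{\omega_N}^{C_N})$ even though $x+y\in V_{\omega_N}^{C_N}$. What is true, and all you need, is the trivial containment: every $\LG$--decomposition of $f$ is a $\Gr$--decomposition. When the $\Gr$--decomposition locus is a singleton (i.e.\ $\ell\geq 3$) and the $\LG$--decomposition locus is non-empty (guaranteed since $f\in\sigma_2(\LG)$), equality of the two loci follows and $f$ is identifiable in $\LG$. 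The paper uses exactly this easy direction.

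Your extra step --- bounding the $\Sigma_2$ decomposition locus from above via the kernel constraint $H_{\bold{x}_2}=\langle e_3,\dots,e_N\rangle$ to force any $\LG$--decomposition into the residual $\LG(2,4)$ --- is more than the paper spells out, and is the right way to justify that the dimension is \emph{exactly} $3$ rather than merely $\geq 3$.
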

\begin{proof}
    The same argument in the proof of Theorem \ref{thm:poset cominuscule} and in Remark \ref{rmk:pullback O(1)} implies that points in $\sigma_2(\LG_{N})$ which are identifiable (resp. tangentially identifiable) as points in $\sigma_2(\Gr(N,2N))$ are identifiable (resp. tangential-identifiable) in $\sigma_2(\LG_{N})$ too. In particular, the orbits $\Sigma_\ell \subset \sigma_2^\circ(\LG_{N})$ and $\Theta_\ell\subset \tau(\LG_{N})$ for $3\leq \ell \leq N$ are respectivelyidentifiable and tangential-identifiable. On the other hand, we show that points in the second-to-minimal orbit $\Sigma_2$ are unidentifiable as a consequence of the fact that $\LG_2$ is a quadric. We use notation from the proof of Lemma \ref{lemma:secant orbits}. \\
    \indent The representative of the second-to-minimal orbit $\Sigma_2$ is $[\bold{e}_{[N-2]}\wedge ( e_{N-1}\wedge e_{N} + e_{-N}\wedge e_{-(N+1)} )]$. Observe that $[e_{N-1}\wedge e_N + e_{-N}\wedge e_{-(N-1)}]$ is a point in the Lagrangian Grassmannian of planes $\LG_2$, hence it admits a decomposition locus of dimension $3$. It follows that points in $\Sigma_2$ are unidetifiable with $3$--dimensional decomposition loci.
    \end{proof}

Collecting the above results leads to a complete description of the identifiability of points in the secant varieties of lines to cominuscule varieties.

\begin{teo}[Main Theorem, part $2.$]\label{thm:identifiability}
    Let $G/P_k$ be a cominuscule variety in its minimal homogeneous embedding and let $d_k^{_{G}}$ be as in \autoref{table:secants to cominuscule}. For $j \in [d_k^{_G}]$, let $\Sigma_j$ and $\Theta_j$ be the $G$--orbits in the secant variety of lines $\sigma_2(G/P_k)$, as defined in Lemma \ref{lemma:secant orbits} and Corollary \ref{cor:tangent orbits}. Then the unidentifiable locus of $\sigma_2(G/P_k)$ coincides with the second-to-minimal orbit $\Sigma_2$. Moreover, the decomposition loci of points in $\Sigma_2$ have dimensions as in \autoref{table:decomposition loci}.
    \end{teo}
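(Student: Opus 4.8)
The plan is to assemble Theorem \ref{thm:identifiability} from the case-by-case analysis already carried out for the individual families of cominuscule varieties, exactly mirroring the structure of the proof of Theorem \ref{thm:poset cominuscule}. First I would observe that, by the orbit poset of \autoref{figure:graph cominuscule}, the identifiable locus is automatically $G$--invariant and hence a union of the finitely many orbits $\Sigma_j$ and $\Theta_j$; moreover by Remark \ref{rmk:terracini is non-identifiable with positive fiber} and the total orderings in Lemma \ref{lemma:secant orbits} and Corollary \ref{cor:tangent orbits}, it suffices to locate the unique minimal unidentifiable orbit. Since $\Sigma_1 = \Theta_1 = G/P_k$ consists of points of rank $1$ (hence trivially identifiable as points of $X$), the claim reduces to showing (i) every point of $\Sigma_2$ is unidentifiable, and (ii) every point of $\Sigma_j$ for $j \geq 3$ is identifiable and every point of $\Theta_j$ for $j \geq 2$ is tangential-identifiable.

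For part (ii), I would simply cite the established results family by family: Grassmannians via \cite[Secc.\ 4-5]{galganostaffolani2022grass}, Spinor varieties via \cite[Secc.\ 4-5]{galgano2023spinor}, the Legendrian cases $\LG(3,6)$, $\Gr(3,6)$, $\mathbb S_6$, $E_7/P_7$ via \cite[Proposition 5.11]{LM01}, and Lagrangian Grassmannians $\LG(N,2N)$ via Proposition \ref{prop:identifiability LG} above. Quadrics and the Cayley plane have $\sigma_2(X)\setminus X = \Sigma_2$ entirely, so there is no orbit $\Sigma_j$, $j\geq 3$, to treat. For part (i), the key uniform mechanism is that the representative of $\Sigma_2$ always contains a copy of a smaller cominuscule variety on which the analogous point is already known to be non-identifiable with a positive-dimensional decomposition locus: concretely $\LG(2,4)\cong \Q_3$ inside the Lagrangian case (Proposition \ref{prop:identifiability LG}), $\Gr(2,4)\cong \Q_4$ inside Grassmannians, a quadric inside Spinor varieties, etc. In each case the ambient $G$ acts transitively on $\Sigma_2$, so unidentifiability at one representative propagates to the whole orbit, and the dimension of the decomposition locus is constant along $\Sigma_2$, giving the entries of \autoref{table:decomposition loci}.

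The one genuinely new computation is the dimension of the decomposition locus of a point of $\Sigma_2$ in each family, which I would extract from the already-cited references: $m$ for $\Q_m$ and $E_7/P_7$-type value $m+2=10$ for the Legendrian family directly from \cite[Prop.\ 5.11]{LM01}, $3$ for all $\LG(N,2N)$ from Proposition \ref{prop:identifiability LG}, and the values for Grassmannians and Spinor varieties from \cite{galganostaffolani2022grass, galgano2023spinor}. I expect the main obstacle to be purely bookkeeping: ensuring that the definition of $\Sigma_2$ used in each cited paper agrees with the orbit $\Sigma_2 = G\cdot[v_{\omega_k}+w_2\cdot v_{\omega_k}]$ of Lemma \ref{lemma:secant orbits}, and that the "decomposition locus" there is computed with respect to $\Hilb_2$ rather than the symmetric square — a subtlety flagged in the discussion after \eqref{def:smooth abstract secant}. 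Once the identifications are checked, the theorem is just the concatenation of the per-family statements together with the observation that $\Sigma_2$ is the unique minimal orbit in which identifiability fails.
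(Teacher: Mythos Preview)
Your proposal is correct and follows essentially the same approach as the paper: the proof of Theorem~\ref{thm:identifiability} in the paper is literally the sentence ``Collecting the above results leads to a complete description\ldots'', where ``the above results'' are precisely the case-by-case citations you list (quadrics and $\OP^2$ via Remark~\ref{rmk:defectivity-unidentifiability}, $E_7/P_7$ and the Legendrian varieties via \cite[Prop.~5.11]{LM01}, Grassmannians and Spinor varieties via \cite{galganostaffolani2022grass, galgano2023spinor}, and Lagrangian Grassmannians via Proposition~\ref{prop:identifiability LG}). One small slip: in your item (ii) you write ``every point of $\Theta_j$ for $j\geq 2$ is tangential-identifiable'', but since $\Theta_2=\Sigma_2$ is precisely the unidentifiable orbit, this should read $j\geq 3$.
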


\begin{table}[H]
	\centering 
	{\small \begin{tabular}{ |P{3.5cm}||P{1.5cm}|P{1.2cm}|P{1.5cm}|P{1.2cm}|P{1.2cm}|P{1.2cm}| }
			\hline
			$G/P_k$ & $\Gr(k,N)$ & $\Q_m$ & $\LG_{N}$ & $\mathbb S_N$ & $\OP^2$ & $E_7/P_7$ \\

            \hline
            Unidentifiable locus & $\Sigma_2$ & $\Sigma_2$ & $\Sigma_2$ & $\Sigma_2$ & $\Sigma_2$ & $\Sigma_2$ \\
            
            \hline
			Dim. of decompos. loci & $4$ & $m$ & $3$ & $6$ & $8$ & $10$ \\
			
			\hline
	\end{tabular} }
	\caption{Unidentifiable loci in $\sigma_2(G/P_k)$ and dimensions of decomposition loci of their points.}
	\label{table:decomposition loci}
\end{table}

\subsection{Singular locus}\label{subsec:sing cominuscule}

Recall that in general every projective variety is singular in its secant variety of lines, unless the latter is the ambient space. In \cite[Theorem 4.2, Remark 4.3]{yoonseo} C. Yoon and H. Seo prove a crucial role of the identifiability in the detection of the singular locus of a secant variety of lines.

\begin{teo}[\cite{yoonseo}]\label{thm:yoonseo}
    Let $X\subset \mathbb P(V)$ be a non-degenerate smooth projective variety. Then the secant variety of lines $\sigma_2(X)\subset \mathbb P(V)$ is smooth along its identifiable locus.
    \end{teo}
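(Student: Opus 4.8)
The plan is to reduce the statement to the injectivity of a single differential, exploiting that the abstract secant variety $Ab\sigma_2(X)$ is smooth and that the projection $\pi_\sigma\colon Ab\sigma_2(X)\to\sigma_2(X)$ is birational as soon as the identifiable locus is nonempty. If that locus is empty there is nothing to prove, so assume it is nonempty. It is open in $\sigma_2(X)$: a point $p$ is identifiable exactly when $\dim\pi_\sigma^{-1}(p)=0$ and $\dim_{\mathbb C}H^0(\pi_\sigma^{-1}(p),\mathcal O)=1$, and both are open conditions by upper semicontinuity of fibre dimension and of $h^0$ (the map $\pi_\sigma$ being proper). A nonempty open subset of the irreducible $\sigma_2(X)$ is dense, so the generic point of $\sigma_2(X)$ is identifiable and $\pi_\sigma$ is birational. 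Since $Ab\sigma_2(X)$ is a $\mathbb P^1$-bundle over $\Hilb_2(X)$ — the linear span of a length-$2$ subscheme of the embedded $X$ is a line — it has dimension $2\dim X+1$, so birationality forces $\dim\sigma_2(X)=2\dim X+1$; in particular $\sigma_2(X)$ is non-defective (cf. Remark \ref{rmk:defectivity-unidentifiability}), and $\dim T_z Ab\sigma_2(X)=2\dim X+1$ for every $z$ by smoothness.

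Fix now $p$ in the identifiable locus, let $z$ be its unique reduced preimage, and let $\mathcal Z\subset X$ be the corresponding length-$2$ subscheme with $p\in\langle\mathcal Z\rangle$. I claim it suffices to prove that $d\pi_\sigma|_z\colon T_z Ab\sigma_2(X)\to T_p\sigma_2(X)$ is injective. Indeed, injectivity is equivalent to $\pi_\sigma$ being unramified at $z$. Factor $\pi_\sigma=\nu\circ\phi$ through the normalization $\nu\colon N\to\sigma_2(X)$ (possible as $Ab\sigma_2(X)$ is normal); then $\phi$ is proper, birational, and $\phi^{-1}(\phi(z))\subseteq\pi_\sigma^{-1}(p)=\{z\}$, so $\phi$ is quasi-finite at $z$ and, by Zariski's Main Theorem, a local isomorphism there — whence $N$ is smooth over $p$ and $\nu^{-1}(p)=\{\phi(z)\}$. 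Since $\phi$ is a local isomorphism at $z$ and $\pi_\sigma$ is unramified at $z$, the finite birational map $\nu$, with $\nu^{-1}(p)$ a single point, is unramified over $p$, hence an isomorphism near $p$ by Nakayama's lemma. Therefore $\sigma_2(X)$ is smooth at $p$.

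To compute $d\pi_\sigma|_z$ I would use the local structure of $Ab\sigma_2(X)$ as the $\mathbb P^1$-bundle over $\Hilb_2(X)$, with $\Hilb_2(X)$ identified near $\mathcal Z$ with (the $\mathfrak S_2$-quotient of) $X\times X$ when $\mathcal Z=\{x,y\}$ is reduced, and with the blow-up of $X\times X$ along the diagonal in the tangential case. Differentiating $(\{a,b\},[\lambda:\mu])\mapsto[\lambda\hat a+\mu\hat b]$ at $z$ gives, in the reduced case, $\im(d\pi_\sigma|_z)=\widehat{T_xX}+\widehat{T_yX}$ (modulo the line $\mathbb C\hat p$, which it contains), and the analogous second-order osculating span in the tangential case. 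As the source has dimension $2\dim X+1$, injectivity of $d\pi_\sigma|_z$ is equivalent to $\dim\langle T_xX,T_yX\rangle=2\dim X+1$, i.e. to $T_xX\cap T_yX=\emptyset$ in $\mathbb P(V)$, i.e. — $\sigma_2(X)$ being non-defective — to $\mathcal Z\notin\Terr_2(X)$.

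The remaining point, which I expect to be the crux, is that identifiability of $p$ forces $\mathcal Z\notin\Terr_2(X)$: if $\mathcal Z\in\Terr_2(X)$, then $p$ lies on a positive-dimensional family of secant lines by \cite[Prop.\ 6.2]{ballico2021terracini} (cf. Remark \ref{rmk:terracini is non-identifiable with positive fiber}), contradicting $\pi_\sigma^{-1}(p)=\{z\}$. The delicate issues are twofold. First, one must be sure that a Terracini-type degeneracy of $\mathcal Z$ produces genuinely \emph{new} decompositions of $p$ rather than merely a fatter scheme structure on the single fibre. Second, one must handle the tangential-identifiable case with care, setting up the blow-up model of $\Hilb_2(X)$ precisely enough to recognize $\im(d\pi_\sigma|_z)$ as the correct osculating space and to replace $\Terr_2(X)$ — defined via reduced subschemes only — by the appropriate second-order non-degeneracy condition at $\mathcal Z$; this is where I would expect the bulk of the work to lie.
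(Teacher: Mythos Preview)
The paper does not give its own proof of this statement: Theorem~\ref{thm:yoonseo} is quoted verbatim from Yoon--Seo \cite[Theorem 4.2, Remark 4.3]{yoonseo} and used as a black box in Section~\ref{subsec:sing cominuscule}. There is therefore nothing in the paper to compare your argument against.

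That said, your sketch follows the natural line --- reduce to injectivity of $d\pi_\sigma$ at the unique preimage, then identify the image of the differential with the span of tangent spaces and invoke a Terracini-type non-degeneracy --- and this is indeed the shape of the argument in \cite{yoonseo}. You are right to flag the two delicate points at the end as unfinished: the reduced case really does require showing that $T_xX\cap T_yX=\emptyset$ whenever $p\in\langle x,y\rangle$ is identifiable, and this is not just Remark~\ref{rmk:terracini is non-identifiable with positive fiber} in reverse (that remark gives the implication for points with \emph{positive-dimensional} fibre, not the contrapositive you need for a single reduced fibre). The tangential case is subtler still, as you note. If you want to complete the argument rather than cite it, you will need to consult \cite{yoonseo} directly; the paper under review does not supply these missing steps.
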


\begin{obs}
    The converse to the above theorem is false in general. An example is provided by the secant variety of lines to a Veronese variety $\nu_d(\mathbb P^n)$. In \cite[Theorem 3.3(iii)]{kanev1999chordal} Kanev proves that the singular locus of $\sigma_2(\nu_d(\mathbb P^n))$ is the Veronese variety itself. In particular, tangent points to the Veronese variety are smooth. Still, they do not lie in the identifiable locus: for instance, the tangent point $[x_0^{d-1}x_1]$ lies in both tangent spaces at $[x_0^d]$ and $[x_0^{d-2}x_1^2]$.
    \end{obs}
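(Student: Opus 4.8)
The statement to establish is that the converse of Theorem~\ref{thm:yoonseo} fails: a point of a secant variety of lines may be smooth without lying in the identifiable locus. The plan is to exhibit an explicit counterexample, and the natural choice is a Veronese variety $X=\nu_d(\mathbb P^n)\subset\mathbb P(\Sym^d\mathbb C^{n+1})$, which is smooth and linearly non-degenerate, so that Theorem~\ref{thm:yoonseo} genuinely applies.

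First I would recall the two classical ingredients. Identifying $\mathbb C^{n+1}$ with the space of linear forms, the points of $\nu_d(\mathbb P^n)$ are the classes $[\ell^d]$ and the embedded tangent space at $[\ell^d]$ is $\mathbb P(\ell^{d-1}\cdot\mathbb C^{n+1})$, so that $\tau(\nu_d(\mathbb P^n))$ is exactly the set of classes $[\ell^{d-1}m]$ with $\ell,m$ linear; this is what lets one check incidences among tangent spaces by hand. The second ingredient is Kanev's computation \cite[Theorem~3.3(iii)]{kanev1999chordal} that $\Sing(\sigma_2(\nu_d(\mathbb P^n)))=\nu_d(\mathbb P^n)$; combined with the first, it gives that every point of $\sigma_2(\nu_d(\mathbb P^n))$ lying off the Veronese --- in particular every tangent point $[\ell^{d-1}m]$ with $m\notin\langle\ell\rangle$, such as $p:=[x_0^{d-1}x_1]$ --- is a \emph{smooth} point of $\sigma_2(\nu_d(\mathbb P^n))$.

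The heart of the argument is to show that such a $p$ is \emph{not} identifiable in the sense of Sec.~\ref{subsec:secant}, i.e.\ that the fibre $\pi_\sigma^{-1}(p)$ of the abstract secant variety is not a singleton. Using the jet description above one writes out, inside $\Sym^d\mathbb C^{n+1}$, the length-$2$ subschemes $\mathcal Z\in\Hilb_2(\nu_d(\mathbb P^n))$ with $p\in\langle\mathcal Z\rangle$ and produces at least two of them: besides the tangent scheme at $[x_0^d]$, one exhibits a second tangent scheme (for instance the one at $[x_0^{d-2}x_1^2]$), so that $p$ sits on more than one secant/tangent line; this is a short apolarity computation. Hence $p$ is a smooth point of $\sigma_2(X)$ outside the identifiable locus, which is exactly the claimed failure of the converse of Theorem~\ref{thm:yoonseo}.

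The step I expect to be the main obstacle is this last one: one must be careful about precisely which length-$2$ schemes realise a given $p$ and verify there are genuinely at least two, and the border-rank-$2$ locus of a Veronese is classically subtle --- honest bisecant lines have to be separated from tangent lines, which is what the apolarity lemma does --- and one must also keep $p$ off the Veronese so that Kanev's smoothness statement stays in force. A robust alternative, worth recording, is to run the argument on one of the defective Veronese embeddings (e.g.\ $\nu_2(\mathbb P^n)$ with $n\geq2$, where $\sigma_2(X)$ is the symmetric-determinant hypersurface): there Kanev still gives $\Sing(\sigma_2(X))=X$, while by Remark~\ref{rmk:defectivity-unidentifiability} \emph{every} point of $\sigma_2(X)\setminus X$ is automatically non-identifiable, so the counterexample is immediate.
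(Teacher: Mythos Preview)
Your approach is exactly the paper's: invoke Kanev's result that $\Sing(\sigma_2(\nu_d(\mathbb P^n)))=\nu_d(\mathbb P^n)$ to get smoothness of tangent points, and then exhibit $[x_0^{d-1}x_1]$ as lying in more than one tangent space. One caveat worth flagging (which affects the paper's wording as well): the point $[x_0^{d-2}x_1^2]$ lies on $\nu_d(\mathbb P^n)$ only when $d=2$, and in fact for $d\geq 3$ unique factorisation forces $x_0^{d-1}x_1=\ell^{d-1}m$ to have $\ell\in\langle x_0\rangle$, so $[x_0^{d-1}x_1]$ is tangential-identifiable there. Your instinct to be ``careful about precisely which length-$2$ schemes realise a given $p$'' is therefore exactly right, and your suggested fallback to $\nu_2(\mathbb P^n)$ (equivalently, reading the example with $d=2$, where $[x_0^{d-2}x_1^2]=[x_1^2]$) is the clean way to make the counterexample watertight.
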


In our setting, combining the above result with our Theorem \ref{thm:identifiability} gives that the singular locus of the secant variety of lines to a cominuscule variety lies in the closure of the second-to-minimal orbit $\overline{\Sigma_2}$. Let us recall what is already known.\\
\hfill\break
\indent {\em Smooth cases.} Let $G/P_k$ be one of the following cominuscule varieties: a quadric $\Q_m$ (this includes $\Gr(2,4)$, $\Gr(2,5)$, $\LG_2$ and $\mathbb S_4$); one of the Legendrian varieties $\Gr(3,6)$, $\LG_3$, $\mathbb S_6$ and $E_7/P_7$; a Spinor variety $\mathbb S_N$ for $N\leq 5$. Then $\sigma_2(G/P_k)$ coincides with its ambient space, hence is smooth.\\
\indent {\em Cases $d_k^{_G}=2$.} Let $G/P_k$ be a cominuscule variety such that $\overline{\Sigma_2}=\sigma_2(G/P_k)\subsetneq \mathbb P(V_{\omega_k}^G)$ (in particular, $d_k^{_G}=2$): this includes the Grassmannians of planes $\Gr(2,N)$ for $N\geq 6$ and the Cayley plane $\OP^2$. Then $\Sing(\sigma_2(G/P_k))=G/P_k$.\\
\indent {\em Higher Grassmannians.} For every $N \geq 7$ and $3\leq k\leq \frac{N}{2}$, the singular locus of $\sigma_2(\Gr(k,7))$ is the closure of the second-to-minimal orbit $\overline{\Sigma_2}$ \cite[Theorem 7.0.9]{galganostaffolani2022grass}. Its dimension has been computed in \cite[Proposition 3.3.6]{galganostaffolani2022grass} (cf. \autoref{table:dim sing loci}). \\
\indent {\em Higher Spinor varieties.} \cite[Corollary 8.3]{galgano2023spinor} proves that, for every $N \geq 7$, the closure of the second-to-minimal orbit is singular while the secant orbits $\Sigma_\ell$ for $\ell \geq 3$ are smooth, that is $\overline{\Sigma_2}\subseteq \Sing(\sigma_2(\mathbb S_N))\subseteq \tau(\mathbb S_N)$. Combining this with Theorem \ref{thm:yoonseo}, we get that for $N\geq 7$ the singular locus of $\sigma_2(\mathbb S_N)$ is the closure $\overline{\Sigma_2}$. Its dimension has been computed in \cite[Proposition 6.2]{galgano2023spinor} (cf. \autoref{table:dim sing loci}).\\
\hfill\break
\indent It only remains to analyse the case of Lagrangian Grassmannians higher than $\LG_3$. For further pourposes, first we compute the dimension of the second-to-minimal orbit $\Sigma_2\subset \sigma_2(\LG_{N})$.

\begin{lemma}\label{lemma:dim Sigma2 LG}
    For any $N\geq 4$, the second-to-minimal orbit $\Sigma_2\subset \sigma_2(\LG_{N})$ has dimension
    \[ \dim \Sigma_2 = \frac{N(N+1)}{2}+2N-3 \ . \]
    \end{lemma}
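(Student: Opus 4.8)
The plan is to compute $\dim \Sigma_2$ as the dimension of the $\Sp_{2N}$-orbit of the representative
\[
\bold{x}_2 \;=\; \left[\bold{e}_{[N-2]}\wedge\bigl(e_{N-1}\wedge e_N + e_{-N}\wedge e_{-(N-1)}\bigr)\right]
\]
produced in the proof of Lemma \ref{lemma:secant orbits}. Since $\dim \Sigma_2 = \dim \Sp_{2N} - \dim \Stab_{\Sp_{2N}}(\bold{x}_2)$, and $\dim\Sp_{2N} = N(2N+1)$, the whole computation reduces to identifying the stabilizer subgroup (or, infinitesimally, the stabilizer subalgebra inside $\mathfrak{sp}_{2N}$). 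Equivalently — and this is the route I would actually take — I would use the correspondence established in Section \ref{subsec:sec2 for generalised}: the $G$-orbit $\Sigma_2$ in $\sigma_2^\circ(G/P_k)$ fibers over the $P_N$-orbit in $\LG(N,2N)$ with representative $[W_2]$, where $W_2 = \langle e_1,\dots,e_{N-2}, e_{-N}, e_{-(N-1)}\rangle$ is the isotropic subspace with $d([E^+],[W_2]) = 2$, i.e. $\dim(E^+\cap W_2) = N-2$. So $\dim\Sigma_2 = \dim\LG(N,2N) + \dim\bigl(P_N\!\cdot\![W_2]\bigr) - (\text{fiber correction})$, but more cleanly: the pair $([E^+],[W_2])$ has a stabilizer, and $\dim\Sigma_2 = \dim\bigl(\Sp_{2N}\cdot([E^+],[W_2])\bigr)$ minus the dimension of the generic fiber of the addition map $([V],[W])\mapsto [v+w]$ restricted to this orbit (which is $1$, accounting for rescaling the two summands while fixing the sum, as soon as the two points are distinct).

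Concretely, first I would compute $\dim\bigl(\Sp_{2N}\cdot([E^+],[W_2])\bigr)$ as $\dim\LG(N,2N) + \dim\bigl(\Stab_{P_N}([E^+])\!\cdot\![W_2]\bigr) = \binom{N+1}{2} + \dim\bigl(P_N\!\cdot\![W_2]\bigr)$, and then the $P_N$-orbit dimension in $\LG(N,2N)$ I would read off from Proposition \ref{prop:secant orbit cominuscule} and \cite{richardson}: the orbit $P_N w_2 P_N$ (with $w_2 = s_{\beta_1}s_{\beta_2}$) has a well-known length/dimension in terms of the orthogonal roots $\beta_1,\beta_2\in\Phi_N^+$. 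Alternatively, and perhaps more transparently, I would compute the stabilizer of $[W_2]$ inside the Levi-times-radical structure of $P_N$ directly: an element of $P_N$ fixes $E^+$, and its induced action on the isotropic subspace $W_2$ (which meets $E^+$ in a codimension-$2$ piece $\langle e_1,\dots,e_{N-2}\rangle$ and has a complementary isotropic $2$-plane $\langle e_{-N},e_{-(N-1)}\rangle$ inside $(E^+)^\perp$-type directions) is constrained by block-upper-triangularity; counting the free parameters in the corresponding parabolic-of-parabolic gives $\dim(P_N\cdot[W_2]) = 2(N-2)+3 = 2N-1$. Then $\dim\bigl(\Sp_{2N}\cdot([E^+],[W_2])\bigr) = \binom{N+1}{2} + 2N - 1$, and subtracting the $1$-dimensional fiber of the addition map yields $\dim\Sigma_2 = \frac{N(N+1)}{2} + 2N - 3$.

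The main obstacle I expect is the bookkeeping of the $P_N$-orbit (equivalently, the stabilizer inside $\mathfrak{p}_N$) of the point $[W_2]$: one must correctly account for which root spaces in $\mathfrak{p}_N^u$ and in the semisimple part $\mathfrak{s}_N\cong\mathfrak{gl}_N$ of the Levi do (not) move $W_2$, and the symplectic-isotropy constraints make this less routine than the Grassmannian analogue. A clean way to bypass the risk of miscounting is to invoke the Lagrangian-inside-$\Gr(N,2N)$ linear-section argument already used in the proof of Theorem \ref{thm:poset cominuscule}: the analogous dimension for $\Sigma_2^{\Gr(N,2N)}$ is known from \cite{galganostaffolani2022grass}, and intersecting with the linear subspace $\mathbb P(V_{\omega_N}^{C_N})$ drops the dimension by the codimension of the relevant tangent directions, which one can compute once and for all from the decomposition $\bigwedge^N\mathbb C^{2N} = \bigoplus_j V_{\omega_{N-2j}}^{C_N}$. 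I would cross-check the two approaches against the small case $N=4$ (where $\frac{N(N+1)}{2}+2N-3 = 10+8-3 = 15$, and $\dim\sigma_2(\LG(4,8)) = \min\{21,\,70-28-1\}=21$, consistent with $\overline{\Sigma_2}\subsetneq\sigma_2$), and report the formula.
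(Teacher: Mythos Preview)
Your approach via the pair-orbit $\Sp_{2N}\cdot([E^+],[W_2])$ is genuinely different from the paper's, which instead fibers $\Sigma_2$ over the isotropic Grassmannian $\IG(N-2,2N)$ via $[\bold x+\bold y]\mapsto H_{\bold x}\cap H_{\bold y}$ and identifies each fiber with (the dense orbit of) $\sigma_2(\LG(2,4))=\mathbb P^4$, giving $\dim\Sigma_2=\dim\IG(N-2,2N)+4$. Your computation $\dim(P_N\cdot[W_2])=2(N-2)+3=2N-1$ is correct (choose the $(N-2)$--plane $E^+\cap W\in\Gr(N-2,N)$, then the complementary Lagrangian $2$--plane in the residual symplectic $4$--space), and so $\dim\bigl(\Sp_{2N}\cdot([E^+],[W_2])\bigr)=\binom{N+1}{2}+2N-1$ is right.

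The gap is in the last step. The ``addition map'' $([V],[W])\mapsto[v+w]$ is not well-defined: a pair determines a secant line, not a point. Passing to the incidence variety $\{([V],[W],[z]):[z]\in\langle[V],[W]\rangle\}$ adds $1$ to the source dimension, and the fiber over $[z]\in\Sigma_2$ is precisely the (ordered) decomposition locus, which by Proposition~\ref{prop:identifiability LG} has dimension $3$, not $1$. So the correct subtraction is $+1-3=-2$, giving $\binom{N+1}{2}+2N-1-2=\binom{N+1}{2}+2N-3$. Your own arithmetic betrays the slip: subtracting a $1$--dimensional fiber from $\binom{N+1}{2}+2N-1$ would yield $2N-2$, not the $2N-3$ you wrote. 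The paper's fibration avoids this circularity-adjacent step (needing the decomposition-locus dimension to compute $\dim\Sigma_2$) by mapping to $\IG(N-2,2N)$, where the fiber dimension is read off directly from $\sigma_2(\LG(2,4))=\mathbb P^4$ without invoking identifiability.
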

\begin{proof}
    Let $\IG(N-2,2N)\subset \mathbb P(\bigwedge^{N-2}\mathbb C^{2N})$ be the Grassmannian of $(N-2)$--dimensional subspaces in $\mathbb C^{2N}$ which are isotropic with respect to a non-degenerate symplectic form $\boldsymbol{\omega}\in \bigwedge^2(\mathbb C^{2N})^\vee$ (the same defining $\LG_{N}$). Consider the fibration 
    \[ \begin{matrix} \rho: & \Sigma_2 & \longrightarrow & \IG(N-2,2N) \\ 
        & [\bold{x}+\bold{y}] & \mapsto & H_{\bold{x}+\bold{y}}=H_\bold{x}\cap H_\bold{y} \end{matrix} \ . \]
    where $H_{\bold{x}}$ (and similars) are defined as in the proof of Lemma \ref{lemma:secant orbits}. For any $[W]\in \IG(N-2,2N)$, let $\bold{w}=w_1\wedge \ldots \wedge w_{N-2}$ be such that $W=H_{\bold{w}}$. Observe that, given $W^\perp$ the orthogonal of $W$ with respect to $\boldsymbol{\omega}$, the orthogonal quotient $W^\perp/W$ has dimension $2N-2(N-2)=4$. \\
    \indent The fiber of $\rho$ at $[W]$ is
    \begin{align*}
        \rho^{-1}([W]) & = \big\{ [\bold{x}+\bold{y}] \in \Sigma_2 \ | \ H_\bold{x}\cap H_\bold{y} = W \big\}\\
        & = \big\{ [\bold{w}\wedge (\bold{a} + \bold{b})] \ | \ [\bold{a}],[\bold{b}] \in \LG(2, W^\perp/W) \ , \ H_{\bold{a}}\cap H_{\bold{b}}=\{0\} \big\} \\
        & \simeq \big\{ [\bold{a}+ \bold{b}] \in \sigma_2(\LG(2,W^\perp/W)) \ | \ d([a],[b])=2 \big\} \ .
        \end{align*}
    In particular, the closure of the fiber is isomorphic to the secant variety of lines to a Lagrangian Grassmannian of planes $\sigma_2(\LG_2)=\mathbb P^4$.\\
    \indent On the other hand, the dimension of the isotropic Grassmannian $\IG(N-2,2N)$ is the well-known value (see Remark \ref{rmk:dim IG} for a proof)
    \[\dim \IG(N-2,2N) = (N-2) \frac{4N-3(N-2)+1}{2} = \frac{(N-2)(N+7)}{2} \ . \]
    The fiber dimension theorem leads to the thesis: in the statement, we collect terms in the expression in order to highlight that $\LG_{N}$ has codimension $2N-3$ in $\overline{\Sigma_2}$.
    \end{proof}

\begin{prop}\label{prop:singular LG}
    For any $N\geq 4$, the secant variety of lines to the Lagrangian Grassmannian $\LG_{N}$ is singular along the second-to-minimal orbit $\Sigma_2$.
    \end{prop}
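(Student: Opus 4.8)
The plan is to show that $\sigma_2(\LG(N,2N))$ fails to be smooth along $\Sigma_2$ by a dimension count on the singular abstract secant variety, exploiting the fact that points of $\Sigma_2$ are non-identifiable with positive-dimensional fibre. Concretely, I would work with the classical (singular) abstract secant variety $Ab\sigma_2^{cl}(\LG(N,2N)) = \big\{(([V],[W]),p) : p \in \langle [V],[W]\rangle\big\}$ built on the symmetric square $S^2(\LG(N,2N))$, together with its projection $\pi$ onto $\sigma_2^\circ(\LG(N,2N))$. By Lemma \ref{lemma:dim Sigma2 LG} we know $\dim \Sigma_2 = \tfrac{N(N+1)}{2} + 2N - 3$, whereas the dense orbit $\Sigma_N = \sigma_2^\circ(\LG(N,2N))$ has dimension $N(N+1)+1$ (the secant variety is non-defective for $N\ge 4$ by \autoref{table:secants to cominuscule}), so $\Sigma_2$ has codimension $2N-3 \ge 5$ inside $\sigma_2(\LG(N,2N))$; in particular $\overline{\Sigma_2} \subsetneq \sigma_2(\LG(N,2N)) \subsetneq \mathbb P(V_{\omega_N}^{C_N})$, so the hypotheses of the statement are genuinely in force and we cannot conclude smoothness trivially.

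The key step is a local analysis of the secant map at a point of $\Sigma_2$. Using the fibration $\rho\colon \Sigma_2 \to \IG(N-2,2N)$ from the proof of Lemma \ref{lemma:dim Sigma2 LG}, a point $[\bold{x}+\bold{y}]\in\Sigma_2$ sits over a fixed isotropic $(N-2)$-plane $W=H_{\bold{x}+\bold{y}}$, and its decomposition locus is (the closure of) the set of Hamming-distance-$2$ pairs in $\LG(2,W^\perp/W)=\LG(2,4)$, a $3$-dimensional quadric; by Proposition \ref{prop:identifiability LG} this decomposition locus is $3$-dimensional, so the generic fibre of $\pi$ over $\Sigma_2$ is $3$-dimensional while over the dense orbit $\Sigma_N$ the fibre of $\pi$ is $0$-dimensional (generic identifiability from Theorem \ref{thm:identifiability}). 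Thus $\pi$ is generically finite but has a positive-dimensional fibre over $\Sigma_2$, and since $Ab\sigma_2^{cl}$ is irreducible of dimension $2\dim\LG(N,2N)+1 = N(N+1)+1 = \dim\sigma_2(\LG(N,2N))$, the image of the whole locus $\pi^{-1}(\overline{\Sigma_2})$ has dimension $< \dim\sigma_2(\LG(N,2N))$; this is exactly the situation in which $\pi$ cannot be a local isomorphism near $\Sigma_2$. More precisely, I would invoke Zariski's Main Theorem in the contrapositive form already recorded in Remark \ref{rmk:terracini is non-identifiable with positive fiber}: a point of $\sigma_2(X)$ which is smooth and has $0$-dimensional fibre along $\pi_\sigma$ has its decomposition locus outside $\Terr_2(X)$; since points of $\Sigma_2$ have positive-dimensional $\pi$-fibre, $\Sigma_2 \subset \Terr_2(\LG(N,2N))$, and then $\overline{\Sigma_2} \subseteq \Sing(\sigma_2(\LG(N,2N)))$ follows because the contrapositive of that remark forces non-identifiable points with positive fibre and finite decomposition locus on the boundary to be singular — here, more cleanly, one uses that a non-identifiable point with positive-dimensional decomposition locus lying in a variety whose abstract model has the same dimension as $\sigma_2$ must be singular, exactly as in the Grassmannian case \cite[Theorem 7.0.9]{galganostaffolani2022grass}.

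Alternatively, and perhaps more robustly, I would run the inductive argument from the proof of Theorem \ref{thm:poset cominuscule} and Remark \ref{rmk:pullback O(1)}: the pullback of $\cal O_{\Gr(N,2N)}(1)$ to $\LG(N,2N)$ is $\cal O_{\LG(N,2N)}(1)$, and $\LG(N,2N) = \Gr(N,2N) \cap \mathbb P(V_{\omega_N}^{C_N})$ is a linear section, with $\Sigma_2^{\LG} = \Sigma_2^{\Gr}\cap \mathbb P(V_{\omega_N}^{C_N})$ and $\sigma_2(\LG(N,2N)) = \sigma_2(\Gr(N,2N))\cap \mathbb P(V_{\omega_N}^{C_N})$. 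By \cite[Theorem 7.0.9]{galganostaffolani2022grass} the Grassmannian secant $\sigma_2(\Gr(N,2N))$ is singular along $\overline{\Sigma_2^{\Gr}}$; one then needs to check that this singularity survives the linear section, i.e. that $\mathbb P(V_{\omega_N}^{C_N})$ meets the singular locus transversally enough that the intersection remains singular. This is where the real work lies: a generic linear section of a singular variety can be smooth, so the argument is not automatic — it requires comparing the local (embedded) tangent spaces, or equivalently checking that the Zariski tangent space of $\sigma_2(\Gr(N,2N))$ at a point of $\Sigma_2^{\Gr}\cap \mathbb P(V_{\omega_N}^{C_N})$ already exceeds $\dim\sigma_2(\Gr(N,2N))$ by at least $\operatorname{codim}_{\,\bigwedge^N\mathbb C^{2N}} V_{\omega_N}^{C_N} + 1$ in the relevant direction. \textbf{The main obstacle} is precisely this transversality/tangent-space comparison controlling the linear section; I expect it to be handled either by the explicit representative $[\bold{e}_{[N-2]}\wedge(e_{N-1}\wedge e_N + e_{-N}\wedge e_{-(N-1)})]$ and a direct computation of $T_{[\bold{x}+\bold{y}]}\sigma_2(\LG(N,2N))$ via Terracini (spanning the tangent spaces of the members of the $3$-dimensional decomposition locus), showing this span strictly exceeds $\dim\sigma_2(\LG(N,2N)) = N(N+1)+1$, or by the fibre-dimension/Zariski's-Main-Theorem route of the previous paragraph, which sidesteps the section altogether.
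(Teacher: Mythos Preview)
Your setup is the same as the paper's: you work with the abstract secant variety, use Lemma \ref{lemma:dim Sigma2 LG} for $\dim\Sigma_2$, and Proposition \ref{prop:identifiability LG} for the $3$--dimensional fibre over $\Sigma_2$. But the step from ``positive-dimensional fibre over $\Sigma_2$'' to ``$\Sigma_2$ is singular'' is not completed, and the tools you invoke do not bridge it. Remark \ref{rmk:terracini is non-identifiable with positive fiber} says only that a smooth point with \emph{finite} decomposition locus lying in $\Terr_2(X)$ would be singular; points of $\Sigma_2$ have \emph{positive-dimensional} decomposition loci, so the remark is silent here. Zariski's Main Theorem likewise does not force the target of a birational map with a positive-dimensional fibre to be singular there (think of a blow-up of a smooth variety). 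Citing \cite[Theorem 7.0.9]{galganostaffolani2022grass} defers the question rather than answers it. A small side issue: $2N-3$ is the codimension of $\LG(N,2N)$ in $\overline{\Sigma_2}$, not of $\Sigma_2$ in the secant; the latter is $\tfrac{N^2-3N+8}{2}$.

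The missing idea, which is exactly what the paper uses, is a \emph{purity-of-ramification} argument run by contradiction. Assume $\Sigma_2$ is smooth; then by the orbit poset the open set $\sigma_2(\LG_N)\setminus \LG_N$ is smooth, and the restricted projection $\zeta$ from (the corresponding open of) the abstract secant variety is a morphism between smooth varieties of the \emph{same} dimension $N(N+1)+1$. For such a map the locus where the differential drops rank is the zero locus of $\det(d\zeta)$, a section of a line bundle, hence is either empty or a divisor. But this locus is precisely $\zeta^{-1}(\Sigma_2)$, which has dimension $\dim\Sigma_2+3=\tfrac{N(N+1)}{2}+2N$, and this equals $N(N+1)$ only when $N(N+1)=4N$, impossible for $N\geq 4$. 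This contradiction is the actual content of the proof. Your linear-section alternative is, as you yourself note, blocked by the transversality issue and does not resolve it; the paper avoids it entirely.
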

\begin{proof}
    The argument is the same as in \cite[Lemma 8.1]{galgano2023spinor}. Assume that the orbit $\Sigma_2$ is smooth. Then the orbit poset in \autoref{figure:graph cominuscule} implies that the open subset $\sigma_2(\LG_N)\setminus \LG_N$ is smooth. The projection $\pi_\sigma$ from the abstract secant variety 
    \[ Ab\sigma_2(\LG_N):=\overline{\{([x],[y],[q])\in S^2(\LG_N) \times \mathbb P(V_{\omega_N}^{\Sp_{2N}}) \ | \ [q]\in \langle [x],[y]\rangle \}} \]
    onto the second factor $\mathbb P(V_{\omega_N}^{\Sp_{2N}})$ has image the secant variety of lines, and restricts to the map 
    \[ \zeta: Ab\sigma_2(\LG_N)\setminus \pi_\sigma^{-1}(\LG_N) \rightarrow \sigma_2(\LG_N)\setminus \LG_N \ . \]
    Since the unidentifiable locus of $\sigma_2(\LG_N)$ is $\overline{\Sigma_2}$ (see Theorem \ref{thm:identifiability}), the map $\zeta$ is an isomorphism on $Ab\sigma_2(\LG_N)\setminus \zeta^{-1}(\Sigma_2)$ (it is a bijection between smooth open subsets), and its differential drops rank precisely along $\zeta^{-1}(\Sigma_2)$ (and it drops of the dimension of the decomposition locus of points in $\Sigma_2$, that is $3$ - cf. \autoref{table:decomposition loci}). In particular, $\zeta$ is a morphism of smooth varieties of the same dimension (which for $N\geq 4$ is $N(N+1)+1$) and, as such, the locus of points where the differential drops rank has to be a divisor. We show that $\zeta^{-1}(\Sigma_2)$ has not codimension 1, leading to a contradiction.\\
    \indent From the fiber dimension theorem, $\zeta^{-1}(\Sigma_2)$ has dimension $\dim \Sigma_2 + 3$. In order to be a divisor, it must hold 
    \[ \dim \Sigma_2 + 3 = \dim Ab\sigma_2(\LG_N) - 1 = N(N+1) \ \iff \ \dim \Sigma_2 = N(N+1)-3 \ . \]
    But from Lemma \ref{lemma:dim Sigma2 LG} we know that $\dim \Sigma_2 = \frac{N(N+1)}{2}+2N-3$, leading to the contradiction $4N=N(N+1)$.
    \end{proof}

The above proposition combined with Theorem \ref{thm:yoonseo} implies that for $N\geq 4$ the singular locus of the secant variety of lines to $\LG_{N}$ coincides with the closure $\overline{\Sigma_2}$. We collect all (non-trivial) descriptions obtained for cominuscule varieties in the following result, which proves and extends Conjecture 5.6.3 in \cite{galganothesis}.

\begin{teo}[Main Theorem, part $3.$]\label{thm:singular locus}
    Let $G/P_k\subset \mathbb P(V_{\omega_k}^G)$ be a cominuscule variety in its minimal embedding, such that $\sigma_2(G/P_k)\subsetneq \mathbb P(V_{\omega_k}^G)$. Then either $\sigma_2(G/P_k)$ has two $G$--orbits and $\Sing(\sigma_2(G/P_k))=G/P_k$, or
    \[ \Sing(\sigma_2(G/P_k))=\overline{\Sigma_2}=G/P_k \sqcup \Sigma_2 \ ,\]
    where $\Sigma_2$ is the $G$--orbit of points in $\sigma_2(G/P_k)$ lying on both a bisecant line and a tangent line. Moreover, the singular locus has dimension as in \autoref{table:dim sing loci}.
    \end{teo}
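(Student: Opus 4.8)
The plan is to bracket $\Sing(\sigma_2(G/P_k))$ between two $G$--invariant closed sets and then let the orbit structure of Theorem \ref{thm:poset cominuscule} decide the outcome. For the \emph{upper bound}: by Theorem \ref{thm:identifiability} the identifiable locus of $\sigma_2(G/P_k)$ is exactly $\sigma_2(G/P_k)\setminus \overline{\Sigma_2}$, and by Yoon--Seo's Theorem \ref{thm:yoonseo} the secant variety of lines is smooth there, so $\Sing(\sigma_2(G/P_k))\subseteq \overline{\Sigma_2}$. For the \emph{lower bound}: since $\sigma_2(G/P_k)\subsetneq \mathbb P(V_{\omega_k}^G)$, the general fact recalled at the beginning of this subsection gives $G/P_k\subseteq \Sing(\sigma_2(G/P_k))$. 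As $\sigma_2(G/P_k)$ is $G$--invariant, so is its singular locus; being also closed it is a union of orbit closures, and since $\overline{\Sigma_2}=G/P_k\sqcup \Sigma_2$ contains only the two orbit closures $G/P_k$ and $\overline{\Sigma_2}$, we conclude that $\Sing(\sigma_2(G/P_k))$ equals either $G/P_k$ or $\overline{\Sigma_2}$.

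It remains to decide which. If $\sigma_2(G/P_k)$ has exactly two $G$--orbits, then $\sigma_2(G/P_k)=\overline{\Sigma_2}$ (equivalently $d_k^{_G}=2$, by Theorem \ref{thm:poset cominuscule}), so the singular locus cannot exhaust $\overline{\Sigma_2}$ -- a variety is smooth at its generic point -- and hence equals $G/P_k$; this covers $\Gr(2,N)$ with $N\geq 6$ and the Cayley plane $\OP^2$. If instead $d_k^{_G}\geq 3$, one must prove $\Sigma_2\subseteq \Sing(\sigma_2(G/P_k))$, i.e.\ exclude the possibility $\Sing(\sigma_2(G/P_k))=G/P_k$. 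Under the standing hypothesis, the cominuscule varieties with $d_k^{_G}\geq 3$ are the Grassmannians $\Gr(k,N)$ with $k\geq 3$ and $2k\leq N$, the Spinor varieties $\mathbb S_N$ with $N\geq 7$, and the Lagrangian Grassmannians $\LG(N,2N)$ with $N\geq 4$ (the remaining cominuscule varieties -- quadrics, $E_7/P_7$, the Legendrian varieties $\Gr(3,6),\LG(3,6),\mathbb S_6$, and the small Spinor varieties -- have $\sigma_2$ equal to the ambient space and are ruled out by hypothesis); for these three families the singularity of $\sigma_2$ along $\Sigma_2$ is given by \cite[Theorem 7.0.9]{galganostaffolani2022grass}, by \cite[Corollary 8.3]{galgano2023spinor} combined with Theorem \ref{thm:yoonseo}, and by Proposition \ref{prop:singular LG}, respectively. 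Finally, the dimension of $\Sing(\sigma_2(G/P_k))=\overline{\Sigma_2}$, namely $\dim \Sigma_2$, is supplied by \cite[Proposition 3.3.6]{galganostaffolani2022grass}, \cite[Proposition 6.2]{galgano2023spinor} and Lemma \ref{lemma:dim Sigma2 LG} for the three families, and equals $\dim G/P_k$ in the two--orbit cases; assembling these values yields \autoref{table:dim sing loci}.

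The delicate point is the lower bound when $d_k^{_G}\geq 3$: Theorem \ref{thm:yoonseo} only gives smoothness of the identifiable locus, so the genuine singularity of $\sigma_2(G/P_k)$ along the unidentifiable orbit $\Sigma_2$ needs an argument of its own. The recurring mechanism -- already carried out family by family -- is a dimension count on the smooth abstract secant variety $Ab\sigma_2(G/P_k)$: were $\Sigma_2$ contained in the smooth locus, then $\sigma_2(G/P_k)\setminus G/P_k$ would be smooth, the projection $\pi_\sigma$ would restrict to a morphism between smooth varieties of the same dimension, and purity of the ramification locus would force the preimage of $\Sigma_2$ (where $d\pi_\sigma$ drops generic rank) to be a divisor; but that preimage has dimension $\dim\Sigma_2$ plus the dimension of the decomposition locus of points of $\Sigma_2$ (\autoref{table:decomposition loci}), and the computation of $\dim\Sigma_2$ in Lemma \ref{lemma:dim Sigma2 LG}, together with its analogues for Grassmannians and Spinor varieties, shows that this preimage fails to have codimension $1$, a contradiction. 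Thus the present statement is the synthesis of the family-by-family singularity results with the uniform orbit picture of Theorem \ref{thm:poset cominuscule} and Yoon--Seo's smoothness criterion.
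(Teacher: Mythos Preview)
Your proposal is correct and follows essentially the same approach as the paper: the upper bound via Theorem \ref{thm:identifiability} combined with Yoon--Seo's Theorem \ref{thm:yoonseo}, the lower bound $G/P_k\subseteq\Sing(\sigma_2(G/P_k))$ from properness, and then the case-by-case singularity of $\Sigma_2$ for $d_k^{_G}\geq 3$ via \cite{galganostaffolani2022grass}, \cite{galgano2023spinor}, and Proposition \ref{prop:singular LG}, with dimensions from the corresponding sources. Your final paragraph recapitulating the purity-of-ramification argument is a helpful gloss on what those references and Proposition \ref{prop:singular LG} actually do, but it is not needed for the proof itself since you already cite those results directly.
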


    \begin{table}[H]
	\centering 
	{\small \begin{tabular}{ |P{7.5cm}|P{2.5cm}|P{3cm}|  }
			\hline
			$G/P_k$ & $\Sing(\sigma_2(G/P_k))$ & $\dim \Sing(\sigma_2(G/P_k))$ \\
			\hline
            \hline
			$\Q_{m}$ , $\Gr(2,5)$ , $\Gr(3,6)$ , $\LG_3$ , $\mathbb S_5$ , $\mathbb S_6$ , $E_7/P_7$ & $\emptyset$ & $0$\\
			
            \hline
            $\Gr(2,N)$ ($N\geq 6$) & $\Gr(2,N)$ & $2(N-2)$ \\

			\hline
			$\Gr(k,N)$ ($N \geq 7$ , $3\leq k \leq \frac{N}{2}$) & $\overline{\Sigma_2}$ & $k(N-k) +2N-7$\\

			\hline
			$\LG_{N}$ ($N\geq 4$) & $\overline{\Sigma_2}$ & $\frac{N(N+1)}{2}+2N-3$ \\
			
            \hline
			$\mathbb S_N$ ($N\geq 7$) & $\overline{\Sigma_2}$ & $\frac{N(N-1)}{2}+4N-15$\\
			
			\hline
			$\OP^2$ & $\OP^2$ & $16$ \\
			\hline
	\end{tabular} }
	\caption{Dimensions of the singular loci of the secant varieties of lines to cominuscule varieties.}
	\label{table:dim sing loci}
\end{table}

\subsection{Strong-Terracini locus}\label{subsec:terracini locus}

We conclude our study of the geometry of the secant variety of lines to a cominuscule variety by determining the $2$-nd Terracini locus of $G/P_k$, actually a refinement of it which we introduce in the following. Let $\widetilde{\pi_H}$ and $\widetilde{\pi_\sigma}$ be the projections from the (smooth) abstract secant variety $\widetilde{Ab\sigma_2}(G/P_k)$ onto $\Hilb_2(G/P_k)$ and $\sigma_2(G/P_k)$ (cf. Sec.\ \ref{subsec:secant}). 

\begin{Def}\label{def:strong-terracini}
The $2$-nd {\em strong-Terracini locus} of a projective variety $X\subset \mathbb P^M$ is the locus of reduced subschemes in $\Hilb_2(X)$ for which the differential of the projection $\widetilde{\pi_\sigma}: \widetilde{Ab\sigma_2}(X) \rightarrow \sigma_2(X)$ drops {\em generic} rank:
\[ \widetilde{\mathbb{T}_2^{str}}(X) := \overline{ \left\{ \{x,y\}\in \Hilb_2^{red}(X) \ | \ \dim\langle T_xX, T_yX\rangle \lneq \dim \sigma_2(X) \right\} } \ . \]
\end{Def}

\begin{obs}\label{rmk:different terracini}
    The two definitions of Terracini locus agree when $\sigma_2(X)$ is generically identifiable. Otherwise they differ significantly because of Terracini's Lemma: if the generic fiber of $\pi_\sigma$ has positive dimension, then $\mathbb{T}_2(X) = S^2X$ (or equivalently, $\widetilde{\mathbb{T}_2}(X) = \Hilb_2(X)$) -- cf. Remark \ref{rmk:classical Terracini locus and Terracini lemma} --, while $\widetilde{\mathbb{T}_2^{str}}(X)$ is never the whole $\Hilb_2(X)$. For instance, for a quadric $\Q_m$ it holds $\widetilde{\mathbb{T}_2^{str}}(\Q_m)=\emptyset$ while $\widetilde{\mathbb{T}_2}(\Q_m)=\Hilb_2(\Q_m)$.
\end{obs}

\indent In light of Remark \ref{rmk:different terracini}, for $G/P_k=\Q_m, \ \Gr(2,N), \ \mathbb S_5, \ \OP^2$ the classical Terracini locus is $\widetilde{\mathbb{T}_2}(G/P_k)=\Hilb_2(G/P_k)$. We now compute their strong-Terracini locus $\widetilde{\mathbb{T}_2^{str}}(G/P_k)$. 

\begin{prop}\label{prop:terracini locus defective}
    Let $G/P_k\subset \mathbb P(V_{\omega_k}^G)$ be a cominuscule variety minimally embedded, such that $d_{k}^{_G}=2$. Then the $2$-nd strong-Terracini locus $\widetilde{\mathbb{T}_2^{str}}(G/P_k)$ is empty if $G/P_k=\Q_m$, and it is $\widetilde{\pi_H}\circ \widetilde{\pi_\sigma}^{-1}(G/P_k)$ otherwise. 
    \end{prop}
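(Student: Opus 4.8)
The plan is to split into the two cases according to whether $G/P_k$ is a quadric or one of $\Gr(2,N)$ ($N\geq 6$), $\mathbb S_5$, $\OP^2$. For the quadric $\Q_m$, Remark \ref{rmk:different terracini} already records $\Terr_2(\Q_m)=\emptyset$; the reason is that $\sigma_2(\Q_m)=\mathbb P^{m+1}$ overfills its ambient space, so every pair $\{x,y\}$ of distinct points of $\Q_m$ spans the line through them, and since $\dim\langle T_x\Q_m,T_y\Q_m\rangle\leq m+1=\dim\sigma_2(\Q_m)$ always holds with equality for generic pairs (indeed for \emph{all} pairs of distinct points, as two distinct tangent hyperplane-sections of a smooth quadric span everything), no pair can make the differential drop generic rank. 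So $\Terr_2(\Q_m)=\emptyset$.

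For the remaining cases, the strategy is to show $\Terr_2(G/P_k)=\pi_H\circ\pi_\sigma^{-1}(G/P_k)$ by a double inclusion, using the orbit poset of Theorem \ref{thm:poset cominuscule} (here $d_k^{_G}=2$, so $\sigma_2(G/P_k)=\overline{\Sigma_2}=G/P_k\sqcup\Sigma_2$, and $\Sigma_2=\Theta_2$ is the dense orbit) together with Remark \ref{rmk:terracini is non-identifiable with positive fiber} and Theorem \ref{thm:identifiability}. For the inclusion $\pi_H\circ\pi_\sigma^{-1}(G/P_k)\subseteq\Terr_2(G/P_k)$: a reduced pair $\{x,y\}\in\Hilb_2^{red}(G/P_k)$ with $\langle x,y\rangle$ meeting $G/P_k$ (equivalently, by the orbit description, $[x+y]\in G/P_k$, which forces Hamming distance $1$, i.e. the line $\langle x,y\rangle$ lies on $G/P_k$) has $\langle T_xX,T_yX\rangle$ contained in the linear span of the irreducible component — a linear space in the cominuscule embedding — through $x$ and $y$, hence $\dim\langle T_xX,T_yX\rangle$ is far smaller than $\dim\sigma_2(G/P_k)$; more cheaply, such pairs are exactly the ones whose image under $\pi_\sigma$ lies in $G/P_k\subsetneq\sigma_2(G/P_k)$, and by Terracini's Lemma $\overline{\langle T_xX,T_yX\rangle}=\widehat{T_{[x+y]}\sigma_2(G/P_k)}$ cannot be all of $\widehat{\sigma_2(G/P_k)}$ since $[x+y]$ is a singular (non-generic-orbit) point. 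For the reverse inclusion $\Terr_2(G/P_k)\subseteq\pi_H\circ\pi_\sigma^{-1}(G/P_k)$: if $\{x,y\}\in\Hilb_2^{red}(G/P_k)$ is \emph{not} in $\pi_H\circ\pi_\sigma^{-1}(G/P_k)$ then $[x+y]\in\Sigma_2=\sigma_2^\circ(G/P_k)\setminus(G/P_k)$, which is the dense orbit; by $G$-homogeneity the differential of $\pi_\sigma$ at $\{x,y\}$ has the same rank as at a generic point of $Ab\sigma_2(G/P_k)$, hence is generically full, so $\{x,y\}\notin\Terr_2(G/P_k)$.

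The main obstacle will be making the first inclusion airtight, i.e. verifying that for a pair spanning a line \emph{contained} in $G/P_k$ one genuinely has $\dim\langle T_xX,T_yX\rangle<\dim\sigma_2(G/P_k)$ rather than merely $\leq$: for $\Q_m$ this fails (that is exactly why the quadric is the exception), so the argument must use that $d_k^{_G}=2$ together with $\sigma_2(G/P_k)\subsetneq\mathbb P(V_{\omega_k}^G)$. I would handle this uniformly via Terracini: $\langle T_xX,T_yX\rangle$ is the affine tangent space to $\sigma_2(G/P_k)$ at $[x+y]$, and since $[x+y]$ lies in the proper closed orbit $G/P_k\subsetneq\sigma_2(G/P_k)$ and $\sigma_2(G/P_k)$ is not a linear space (being a proper subvariety strictly containing the non-linear $G/P_k$), the tangent space at such a point has dimension strictly less than $\dim\sigma_2(G/P_k)$ — equivalently, $G/P_k\subseteq\Sing(\sigma_2(G/P_k))$, which is Theorem \ref{thm:singular locus} in these cases. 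This reduces everything to the already-established orbit structure and singular-locus computation, and the only case-specific check is that $\sigma_2(G/P_k)$ is indeed not linear, which is immediate from $\dim G/P_k\geq 2$ and linear non-degeneracy.
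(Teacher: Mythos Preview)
Your reverse inclusion (distance--$2$ pairs are never in $\Terr_2$) is fine: the $G$--action is transitive on such pairs, so the rank of $d\pi_\sigma$ is constant there and equals the generic rank. The gap is entirely in the forward inclusion, i.e.\ in showing that distance--$1$ pairs \emph{do} lie in $\Terr_2(G/P_k)$ for $\Gr(2,N)$, $\mathbb S_5$, $\OP^2$.

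Your argument there misfires in two related ways. First, Terracini's Lemma gives the equality $\langle \hat T_xX,\hat T_yX\rangle=\hat T_{[x+y]}\sigma_2(X)$ only at \emph{generic} points of the secant; for a special pair (Hamming distance $1$) you only have the containment $\langle \hat T_xX,\hat T_yX\rangle\subseteq \hat T_{[x+y]}\sigma_2(X)$, coming from the differential of $\pi_\sigma$. Second, and more seriously, you then invoke singularity of $[x+y]\in G/P_k\subseteq\Sing(\sigma_2(G/P_k))$ to conclude that the tangent space has dimension strictly \emph{less} than $\dim\sigma_2(G/P_k)$. That is the inequality in the wrong direction: at a singular point the Zariski tangent space is strictly \emph{larger}. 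So the containment above gives you no upper bound on $\dim\langle T_xX,T_yX\rangle$ at all. Concretely, your argument cannot separate $\mathbb S_5$ from $\Q_m$: in both cases $\sigma_2(X)=\mathbb P(V_{\omega_k}^G)$ is smooth, so there is no singular point to appeal to, yet $\Terr_2(\mathbb S_5)$ is nonempty while $\Terr_2(\Q_m)$ is empty.

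The paper proceeds differently and computationally. For $x\in G/P_k$ it identifies the reduced fibre $\pi_\sigma^{-1}(x)$ with $Y_k\times(\mathbb P^1\setminus\{pt\})$, where $Y_k:=T_x(G/P_k)\cap G/P_k$ is the variety of lines in $G/P_k$ through $x$; this uses that $G/P_k$ is an intersection of quadrics, so any secant line through $x$ lies in $G/P_k$. Then $\ker d\pi_\sigma$ at $(\{a,b\},x)$ is the tangent to the fibre, so the affine rank of $d\pi_\sigma$, which equals $\dim\langle \hat T_aX,\hat T_bX\rangle$, is $2\dim(G/P_k)-\dim Y_k$. Comparing this number to $\dim\sigma_2(G/P_k)+1$ case by case (with $Y_k=\Q_{m-2},\ \mathbb P^1\times\mathbb P^{N-3},\ \Gr(2,5),\ \mathbb S_5$ respectively) gives equality for $\Q_m$ and strict inequality for the other three, which is exactly the dichotomy in the statement.
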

\begin{proof}
    From Terracini's Lemma, $\widetilde{\mathbb{T}_2^{str}}(G/P_k)$ can be either empty or $\widetilde{\pi_H}\circ \widetilde{\pi_\sigma}^{-1}(G/P_k)$.\\
    Fix $x\in G/P_k$. Since $G/P_k$ is intersection of quadrics, for any reduced subscheme $Z=\{a,b\}\in \Hilb_2^{red}(G/P_k)$ such that $x \in \langle a,b \rangle$, it holds $\langle a, b \rangle \subset G/P_k$ (i.e. $a,b\in G/P_k$ have Hamming distance $1$). In particular, any two points on the line $\langle a,b\rangle$ define another reduced subscheme $Z'$ which still is in the fiber $\widetilde{\pi_\sigma}^{-1}(x)$. Therefore all points in the fiber at $x$ can be obtained as follows: pick $a$ to be a point in the intersection $T_x(G/P_k)\cap (G/P_k)$ (these are the lines through $x$ lying in $G/P_k$), and then pick $b$ to be any point in the punctured line $\langle x,a\rangle \setminus \{a\}$. It follows
    \[ \widetilde{\pi_\sigma}^{-1}(x) \simeq \bigg( T_x(G/P_k)\cap G/P_k \bigg) \times \mathbb P^1\setminus \{pt\} \ , \]
    hence $\dim \widetilde{\pi_\sigma}^{-1}(x) = \dim \left[T_x(G/P_k)\cap (G/P_k)\right] +1$. The intersection $Y_k:= T_x(G/P_k)\cap (G/P_k)$ is the homogeneous variety whose Dynkin diagram is obtained by the one of $G/P_k$ by removing the $k$--th marked node and marking its adjacent nodes (cf. \cite[Prop.\ 2.5]{LM03}). In particular, for $G/P_k=\Q_m, \Gr(2,N), \mathbb S_5, \OP^2$, the corresponding intersections are respectively 
    \[ Y_k \ = \ \Q_{m-2} \ , \ \mathbb P^1\times \mathbb P^{N-3} \ , \  \Gr(2,5) \ , \ \mathbb S_5 \ . \]
    \indent Now, given $x \in G/P_k$ and $Z=\{a,b\} \in \Hilb_2^{red}(G/P_k)$ such that $x \in \langle Z \rangle$, the kernel of the differential of $\widetilde{\pi_\sigma}$ at $(Z,x)$ is the tangent space of $\widetilde{\pi_\sigma}^{-1}(x)$ at $(Z,x)$, whose dimension is lower bounded by the dimension of the fiber (they coincide if the fiber is reduced!). Then the thesis follows by comparing case by case the affine dimension of the secant variety $\dim \sigma_2(G/P_k) + 1$ with the affine dimension of the span of the tangent spaces
    \begin{align*}
        \dim \langle T_a(G/P_k), T_b(G/P_k)\rangle & \leq 2\left( \dim (G/P_k) +1\right) - \left( \dim \pi_\sigma^{-1}(x)+1 \right) \\ 
        & = 2\dim(G/P_k) - \dim(Y_k) \ . 
        \end{align*}
    \end{proof}

\begin{teo}[Main Theorem, part $4.$]\label{thm:terracini locus cominuscule}
	Let $G/P_k\subset \mathbb P(V_{\omega_k}^G)$ be a cominuscule variety minimally embedded, such that $d_k^{_G}\geq 3$. Then the $2$-nd strong-Terracini locus $\widetilde{\mathbb{T}_2^{str}}(G/P_k)$ corresponds to the distance--$2$ orbit closure $\overline{\Sigma_{2}}$. More precisely,
	\[ \widetilde{\mathbb{T}_2^{str}}(G/P_k) = \left( \widetilde{\pi_H} \circ \widetilde{\pi_\sigma}^{-1}\right) \left( \overline{\Sigma_{2}} \right) \ . \]
\end{teo}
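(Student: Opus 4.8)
The plan is to establish the equality as two inclusions after a few reductions. From \autoref{table:secants to cominuscule}, every cominuscule $G/P_k$ with $d_k^{_G}\ge 3$ satisfies $\dim\sigma_2(G/P_k)=2\dim(G/P_k)+1$, so $\sigma_2(G/P_k)$ is generically identifiable, the projection $\pi_\sigma$ from $Ab\sigma_2(G/P_k)$ is generically finite (indeed birational onto $\sigma_2(G/P_k)$), the two notions of Terracini locus agree (Remark \ref{rmk:different terracini}), and $\overline{\Sigma_2}\subsetneq\sigma_2(G/P_k)$ is a proper closed $G$--subvariety. The group $G$ acts on $\Hilb_2(G/P_k)$ preserving $\dim\sigma_2(G/P_k)$ and sending $\langle T_aX,T_bX\rangle$ to $\langle T_{ga}X,T_{gb}X\rangle$, so $\Terr_2(G/P_k)$ is $G$--invariant. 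By the cominuscule structure (Proposition \ref{prop:secant orbit cominuscule}, via the correspondence $P_k\backslash G/P_k \leftrightarrow (G/P_k)^{\times 2}/G$) the $G$--orbits of reduced length--$2$ subschemes are exactly the classes $\mathcal{O}_j$ of unordered pairs at Hamming distance $j$, $1\le j\le d_k^{_G}$; their closures are totally ordered, $\overline{\mathcal{O}_j}=\bigsqcup_{i\le j}\mathcal{O}_i$ inside $\Hilb_2^{red}(G/P_k)$, and by Lemma \ref{lemma:secant orbits} a reduced pair $\{a,b\}$ lies in $\mathcal{O}_j$ if and only if $[a+b]\in\Sigma_j$ (with $\Sigma_1=G/P_k$). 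So it suffices to prove $\Terr_2(G/P_k)=\overline{\mathcal{O}_2}$, which is precisely the claimed $(\pi_H\circ\pi_\sigma^{-1})(\overline{\Sigma_2})$.

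For the inclusion $\Terr_2(G/P_k)\subseteq\overline{\mathcal{O}_2}$ I would argue by excluding higher distances. Let $\{a,b\}$ be reduced with $\dim\langle T_aX,T_bX\rangle<\dim\sigma_2(G/P_k)$ and suppose $d(a,b)=j\ge 3$. Then $q:=[a+b]\in\Sigma_j$, which by Theorem \ref{thm:identifiability} lies in the identifiable locus (the unidentifiable locus being only $\Sigma_2$), so $\pi_\sigma^{-1}(q)=\{\{a,b\}\}$ is a single reduced point; and $q\notin\overline{\Sigma_2}$, hence $q$ is a smooth point of $\sigma_2(G/P_k)$ --- either because $\sigma_2(G/P_k)=\mathbb{P}(V_{\omega_k}^G)$, or because $\Sing\sigma_2(G/P_k)=\overline{\Sigma_2}$ by Theorem \ref{thm:singular locus}. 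By Remark \ref{rmk:terracini is non-identifiable with positive fiber} (Zariski's Main Theorem applied to $\pi_\sigma$ at a smooth target point with $0$--dimensional fibre) we get $\{a,b\}\notin\Terr_2(G/P_k)$, hence $\dim\langle T_aX,T_bX\rangle=\dim\sigma_2(G/P_k)$, a contradiction. Thus every reduced subscheme defining $\Terr_2(G/P_k)$ has Hamming distance $\le 2$, so it lies in $\mathcal{O}_1\sqcup\mathcal{O}_2\subseteq\overline{\mathcal{O}_2}$; taking closures yields $\Terr_2(G/P_k)\subseteq\overline{\mathcal{O}_2}$.

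For the reverse inclusion $\overline{\mathcal{O}_2}\subseteq\Terr_2(G/P_k)$ it is enough, since $\Terr_2(G/P_k)$ is closed, $\mathcal{O}_1\subseteq\overline{\mathcal{O}_2}$, and $\Terr_2(G/P_k)$ is $G$--invariant, to exhibit one distance--$2$ pair lying in it. Fix a general $q\in\Sigma_2$: by Theorem \ref{thm:identifiability} and \autoref{table:decomposition loci} the fibre $\pi_\sigma^{-1}(q)$ --- the decomposition locus of $q$ --- has positive dimension, and the explicit descriptions of these decomposition loci in \cite{galganostaffolani2022grass, galgano2023spinor, LM01}, together with the reduction of the Lagrangian case to $\sigma_2(\LG(2,4))=\mathbb{P}^4$ from Proposition \ref{prop:identifiability LG}, show it to be irreducible with general member a reduced pair $\{a,b\}$ at Hamming distance $2$. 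At such a general point $(\{a,b\},q)$ the fibre of $\pi_\sigma$ is smooth of positive dimension, so $d\pi_\sigma$ has nontrivial kernel there; since $\dim Ab\sigma_2(G/P_k)=\dim\sigma_2(G/P_k)$, this forces $\dim\langle T_aX,T_bX\rangle<\dim\sigma_2(G/P_k)$, i.e. $\{a,b\}\in\Terr_2(G/P_k)$. By $G$--invariance, $\mathcal{O}_2\subseteq\Terr_2(G/P_k)$, hence $\overline{\mathcal{O}_2}\subseteq\Terr_2(G/P_k)$; combining the two inclusions gives $\Terr_2(G/P_k)=\overline{\mathcal{O}_2}=(\pi_H\circ\pi_\sigma^{-1})(\overline{\Sigma_2})$.

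The step I expect to be the main obstacle is the reverse inclusion, precisely the passage from ``$\pi_\sigma$ has positive--dimensional fibres over $\Sigma_2$'' to ``$d\pi_\sigma$ drops rank at a point whose image under $\pi_H$ is a genuine Hamming--distance--$2$ pair'': this requires knowing that the decomposition loci of points of $\Sigma_2$ are irreducible with the stated generic member, which comes from the case-by-case analysis in the cited works and in Proposition \ref{prop:identifiability LG}, not from the orbit poset alone. A secondary point to treat with care is the meaning of $(\pi_H\circ\pi_\sigma^{-1})(\overline{\Sigma_2})$ over $G/P_k$: as in the proof of Proposition \ref{prop:terracini locus defective}, one discards the trivial decompositions $\{q,b\}$ of a point $q\in G/P_k$, so that $\pi_H(\pi_\sigma^{-1}(G/P_k))$ is the locus of pairs spanning a line contained in $G/P_k$, equal to $\overline{\mathcal{O}_1}\subseteq\overline{\mathcal{O}_2}$, producing no discrepancy with $\overline{\mathcal{O}_2}$.
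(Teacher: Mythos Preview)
Your proof is correct and follows essentially the same two-inclusion strategy as the paper: identifiable points are smooth so Zariski's Main Theorem excludes them from $\Terr_2$, while unidentifiable points in $\Sigma_2$ have positive-dimensional decomposition loci and hence land in $\Terr_2$. The paper compresses both directions into a direct appeal to Remark~\ref{rmk:terracini is non-identifiable with positive fiber} (which in turn cites \cite[Prop.~6.2]{ballico2021terracini}), so your worry about irreducibility of decomposition loci and reducedness of their generic member is unnecessary: that remark already guarantees that a point of $\sigma_2^\circ(X)$ with positive-dimensional fibre produces a point of $\Terr_2(X)$, without any case-by-case input.
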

\begin{proof}
    From Theorem \ref{thm:identifiability} we know that points in $\sigma_2^\circ(G/P_k)$ are either identifiable, or unidentifiable with positive dimensional fiber along $\widetilde{\pi_\sigma}$ (the fiber corresponds to the decomposition locus). Then the thesis follows from Remark \ref{rmk:terracini is non-identifiable with positive fiber}. Indeed, the decomposition loci of unidentifiable points define points in the (strong-)Terracini locus, i.e. $( \widetilde{\pi_H} \circ \widetilde{\pi_\sigma}^{-1} ) ( \overline{\Sigma_{2}} ) \subset \widetilde{\mathbb{T}_2^{str}}(G/P_k)$. On the other hand, the (open) set of identifiable points is smooth (from Theorem \ref{thm:identifiability} and Theorem \ref{thm:singular locus}), hence by Zariski's Main Theorem their fibers do not lie in $\widetilde{\mathbb{T}_2^{str}}(G/P_k)$.
\end{proof}

\begin{table}[H]
	\centering 
	{\small \begin{tabular}{ |P{5cm}|P{3cm}|  }
			\hline
			$G/P_k$ & $\widetilde{\mathbb{T}_2^{str}}(G/P_k)$ \\
			\hline
            \hline
			$\Q_{m}$ & $\emptyset$ \\

            \hline
			$\Gr(2,N)$ ($N\geq 5$) , $\mathbb S_5$ , $\OP^2$ & $\widetilde{\pi_H}\circ \widetilde{\pi_\sigma}^{-1}(G/P_k)$ \\

			\hline
			$\Gr(k,N)$ ($N \geq 6$ , $3\leq k \leq \frac{N}{2}$) & $\widetilde{\pi_H}\circ \widetilde{\pi_\sigma}^{-1}(\overline{\Sigma_2})$ \\

			\hline
			$\LG_{N}$ ($N\geq 3$) & $\widetilde{\pi_H}\circ \widetilde{\pi_\sigma}^{-1}(\overline{\Sigma_2})$  \\
			
            \hline
			$\mathbb S_N$ ($N\geq 6$) & $\widetilde{\pi_H}\circ \widetilde{\pi_\sigma}^{-1}(\overline{\Sigma_2})$ \\

            \hline
			$E_7/P_7$ & $\widetilde{\pi_H}\circ \widetilde{\pi_\sigma}^{-1}(\overline{\Sigma_2})$ \\
			\hline
	\end{tabular} }
	\caption{Terracini loci of the secant varieties of lines to cominuscule varieties.}
	\label{table:terracini loci}
\end{table}

\section{A non-cominuscule example: $\IG(k,2N)$}\label{sec:isotropicgrassmannian}


This section is devoted to show that the graph in \autoref{figure:graph cominuscule} does not hold for any generalised Grassmannians. As a counterexample, for $2\leq k\lneq N$ we consider the isotropic Grassmannian $\IG(k,2N)=C_N/P_k$ in its minimal embedding $\mathbb P(V_{\omega_k}^{C_N})=\mathbb P(\bigwedge^k\mathbb C^{2N})$. Notice that the condition $k\lneq N$ is necessary, as for $k=N$ one gets the Lagrangian Grassmannian $\LG_{N}$ which is cominuscule, while for $k=1$ one gets the trivial representation.

\paragraph*{Setting.} Let $V\simeq \mathbb C^{2N}$ be a complex vector space endowed with a non-degenerate symplectic form $\boldsymbol{\omega}\in \bigwedge^2V^\vee$. For any $m \in \mathbb Z_{>0}$ we fix the notation
\[ \mathbbm{J}_m := 
{\footnotesize \begin{bmatrix} 
	& & 1 \\
	& \iddots \\
	1 \end{bmatrix}}\in \Mat_{m\times m} \ \ \ \ \ , \ \ \ \ \  
\Omega_m:={\small \begin{bmatrix} 0 & \mathbbm{J}_m \\-\mathbbm{J}_m & 0 \end{bmatrix}}\in \bigwedge^2\mathbb C^{2m} \ . \]
Let $(e_1,\ldots, e_N,e_{-N},\ldots , e_{-1})$ be a basis of $V$ such that the symplectic form $\boldsymbol{\omega}$ is represented by the skew-symmetric matrix $\Omega_N\in \bigwedge^2\mathbb C^{2N}$: in particular, for any $i,j\in [N]$ it holds
\[ \boldsymbol{\omega}(e_i,e_j)=\boldsymbol{\omega}(e_{-i},e_{-j})=0 \ \ \ , \ \ \ \boldsymbol{\omega}(e_i,e_{-j})=\delta_{ij} \ . \]
Given the symplectic group $\Sp^{\boldsymbol{\omega}}(V)=\{A \in \SL(V) \ | \ ^t\!A\Omega_N A=\Omega_N\}$, the isotropic Grassmannian $\IG_{\boldsymbol{\omega}}(k,V)=\left\{ W\subset V \ | \ W \simeq \mathbb C^k, \ W\subset W^\perp\right\}$ is the projective variety of $k$-dimensional linear subspaces of $V$ which are $\boldsymbol{\omega}$--isotropic minimally embedded (via restriction of the Pl\"{u}cker embedding) in $\mathbb P(V_{\omega_k}^{C_N})=\mathbb P(\bigwedge^k\mathbb C^{2N})$. The highest weight vector in $V_{\omega_k}^{C_N}$ and its corresponding $\boldsymbol{\omega}$--isotropic subspace are 
\[ v_{\omega_k}=\bold{e}_{[k]}=e_1\wedge \ldots \wedge e_k \ \ \ , \ \ \ E_k=\langle e_1,\ldots, e_k\rangle_\mathbb C\subset V \ . \] 
Moreover, the $\boldsymbol{\omega}$--orthogonal and the dual subspaces to $E_k$ are respectively
\[ E_k^\perp = \langle e_1,\ldots, e_N, e_{-N}, \ldots, e_{-k-1}\rangle_\mathbb C \ \ \ , \ \ \ E_k^\vee=\langle e_{-k},\ldots, e_{-1}\rangle_\mathbb C \ .\]
For simplicity we write $\Sp_{2N}$ and $\IG(k,2N)$ by omitting the symplectic form $\boldsymbol{\omega}$ and the vector space $V$, and we fix $P:=P_k$ to be the parabolic subgroup defined by the $k$--th simple root $\alpha_k$.

\paragraph*{Levi decomposition of $P$.} The parabolic subgroup $P$ stabilizes the subspace $E_k$. In particular, it admits the Levi decomposition $P=LP^u$ where 
\begin{equation}\label{Levi} 
L=\left\{{\scriptsize \begin{bmatrix} G \\ & S \\& & \mathbbm{J}_k(^t\!R^{-1})\mathbbm{J}_k \end{bmatrix}} \ \bigg| \ {\small \begin{matrix} G \in \GL(E_k) \\ S \in \Sp(E_k^\perp\!/E_k) \end{matrix}}\right\}\simeq \GL(E_k)\times \Sp(E_k^\perp\!/E_k)
\end{equation}
is the Levi factor of $P$ stabilizing the consecutive quotients $E_k, E_k^\perp\!/E_k$ and $E_k^\vee\simeq V/E_k^\perp$ in the flag $E_k\subset E_k^\perp\!\subset V$, and 
\begin{equation}\label{unipotent}
P^u=\left\{ {\scriptsize\begin{bmatrix} I_k & A & B \\ & I_{2N-2k} & \Omega_{N-k}(^t\!A)\mathbbm{J}_k\\ & & I_k \end{bmatrix}} \ \bigg| \ {\footnotesize \begin{matrix} A \ \in \ E_k\otimes E_k^\perp\!/E_k \\ B \in E_k\otimes E_k \\ B\mathbbm{J}_k -\mathbbm{J}_k(^t\!B)=A\Omega_{N-k}(^t\!A)\end{matrix}} \right\}
\end{equation}
is the unipotent radical of $P$ acting trivially on the consecutive quotients of $E_k\subset E_k^\perp\!\subset V$.

\paragraph*{Tangent space.} Consider the tangent space $T_{[v_{\omega_k}]}\IG(k,2N)\simeq \mathfrak{sp}_{2N}/\mathfrak{p}\simeq \mathfrak{p}^u$. The simple root $\alpha_k$ defines a $\mathbb Z$--grading on the Lie algebra $\mathfrak{sp}_{2N}$ whose degrees are bounded by the coefficient $m_{\alpha_k}(\rho)$ of $\alpha_k$ in the longest root $\rho$: since $k \leq N-1$, it holds $m_{\alpha_k}(\rho)=2$, so that $\mathfrak{sp}_{2N}/\mathfrak{p} = \mathfrak{g}_{-1}\oplus \mathfrak{g}_{-2}$. Thus the nilpotent algebra $\mathfrak{p}^u$ is not irreducible as $P$--module, and its only $P$--submodule is $\mathfrak{g}_{-1}$. Deriving the description of the unipotent radical $P^u$ in \eqref{unipotent} we get
\[ \mathfrak{p}^u= \big( E_k^\vee \otimes E_k^\perp\!/\!E_k \big) \oplus \Sym^2\!E_k^\vee \ , \]
whose only $P$--invariant (for the adjoint action of $P$) summand is $\mathfrak{g}_{-1}=E_k^{\vee} \otimes E_k^\perp\!/\!E_k$.

\subsection{$P_k$--orbit poset in $\mathfrak{sp}_{2N}/\mathfrak{p}_k$, and orbit dimensions}

We want to determine the poset of $P$--orbits in the tangent space $T_{[v_{\omega_k}]}\IG(k,2N)\simeq \mathfrak{sp}_{2N}/\mathfrak{p}$, that is in the nilpotent algebra $\mathfrak{p}^u= \Sym^2\!E_k^\vee \oplus \big( E_k^\vee \otimes E_k^\perp\!/\!E_k \big)$. Since $P$ has nilpotency class $\ell(P)=2$ (cf. \eqref{eq:nilpotency class}), we already know from \cite[Theorem 1.1]{hillerohrle} that $\mathfrak{p}^u$ has finitely many $P$--orbits. An element of the tangent space is of the form
\[ \sigma + H \ \in \ \Sym^2\!E_k^\vee \oplus \left( E_k^\vee \otimes E_k^\perp\!/E_k\right) \ . \] 
The action of $\GL(E_k)\subset L$ conjugates it to one of the form $(e_{-k}^2+\ldots + e_{-k+r-1}^2)+\widehat{H}$, where $r$ is the rank of the quadratic form $\sigma \in \Sym^2\!E_k^\vee$. In this respect, we may assume $\sigma$ to be
\[ \sigma_r:=e_{-k}^2+\ldots + e_{-k+r-1}^2 \ . \]
In the following we describe separately the action of the Levi factor $L$ and the unipotent radical $P^u$ on an element of the form $\sigma_r+H$ for fixed $r\leq k$ and $H \in E_k^\vee \otimes E_k^\perp\!/E_k$.

\paragraph*{Action of $P^u$.} Consider the unipotent radical $P^u\subset P$ in \eqref{unipotent}, which depends on the entries $A \in E_k\otimes E_k^\perp\!/E_k$ and $B \in E_k\otimes E_k$ only. Notice that the action of $B$ on $\Sym^2\!E_k^\vee$ is identically zero since there is no non-zero projection from $(E_k\otimes E_k)\otimes \Sym^2\!E_k^\vee$ onto $\Sym^2\!E_k^\vee\oplus \big (E_k^\vee \otimes E_k^\perp\!/E_k \big)$. Similarly, also the actions of $A$ and $B$ on $E_k^\vee \otimes E_k^\perp\!/E_k$ are identically zero. \\
\indent The only non-trivial action is the one of $A$ on $\Sym^2\!E_k^\vee$, which by Schur's theorem coincides with the contraction map
\[ \begin{matrix}
\left( E_k\otimes E_k^\perp\!/E_k \right) \otimes \Sym^2\!E_k^\vee & \longrightarrow & E_k^\vee \otimes E_k^\perp\!/E_k \\
\left( \sum_{i,j} a_{ij}e_i\otimes e_j \right) \otimes f & \mapsto & \sum_{i,j} a_{ij}\frac{\partial{f}}{\partial e_{-i}}\otimes e_j
\end{matrix} \ . \]
In particular, given $A=\sum_{i=1}^k\sum_{j=k+1}^{-k-1}a_{ij}e_i\otimes e_j \in E_k \otimes E_k^\perp\!/E_k$, one gets 
\[ A\cdot \sigma_r = \sum_j\sum_{i=k-r+1}^k 2a_{ij}e_{-i}\otimes e_j \ . \]

\begin{obs*}
    Here is the big difference between the cominuscule and the non-cominuscule cases. In the former, the unipotent radical is abelian, hence it acts trivially on the tangent space. On the contrary, in the non-cominuscule case the unipotent radical acts non-trivially.
    \end{obs*}

It follows that, for any unipotent element $g_A\in P^u$ depending on $A\in E_k\otimes E_k^\perp\!/E_k$, and for any $H=\sum_{i,j}h_{ij}e_{-i}\otimes e_j\in E_k^\vee \otimes E_k^\perp\!/E_k$, it holds
\begin{align*}
g_A \cdot (\sigma_r + H) & = \left(I_k\cdot \sigma_r\right) + \left(I_{2N-2k}\cdot H\right) + \left(A\cdot \sigma_r\right) = \sigma_r + H + A\cdot \sigma_r \\
& = \sigma_r + \sum_j \left[ \sum_{i=1}^{k-r}h_{ij}e_{-i}\otimes e_j + \sum_{i=k-r+1}^k(h_{ij}+2a_{ij})e_{-i}\otimes e_j \right] \ . 
\end{align*}
Therefore acting by $P^u$ allows to \textquotedblleft truncate\textquotedblright \space the summand $H$. We conclude that there are infinitely many $P^u$--orbits in $\sigma_r + \left(E_k^\vee \otimes E_k^\perp\!/E_k\right)$, namely
\[ \sigma_r + \left(E_k^\vee \otimes E_k^\perp\!/E_k\right) \ = \ \bigsqcup_{Q\in \mathbb C^{k-r}\otimes \mathbb C^{2N-2k}} P^u\cdot \left(\sigma_r + \sum_{i=1}^{k-r}e_{-i}\otimes Q^i\right) \ , \]
\noindent where $Q^i$ denotes the $i$-th column of the matrix $Q \in \mathbb C^{k-r}\otimes \mathbb C^{2N-2k}$.

\begin{obs*}
	Be aware that to be infinitely--many are the $P^u$--orbits (not $P$--orbits) for a fixed $\sigma \in \Sym^2\!E_k^\vee$. We are going to see that the additional action of the Levi factor $L$ reduces the number of orbits in the whole tangent space to finitely many.
\end{obs*}

\paragraph*{The action of $L$.} Consider the Levi factor $L\subset P$ in \eqref{Levi}. In light of the previous arguments, up to acting by both $\GL(E_k)\subset L$ and $P^u$, we study the action of $L$ on an element of the form
\begin{equation}\label{s+H after P^u} 
\sigma_r + H \ = \ \left(e_{-k}^2 +\ldots + e_{-k+r-1}^2\right) + \sum_{i=1}^{k-r}e_{-i}\otimes q_i 
\end{equation}
for certain vectors $q_i \in E_k^\perp\!/E_k$. Also, we are left with considering the action of the stabilizer
\[ \Stab_{L}(\sigma_r)= \left\{ {\scriptsize \begin{bmatrix} \ast & \ast \\ & \ast \\ & & N \\ & & & O & R \\ & & & & M \end{bmatrix}}\in L \ \bigg| \ {\footnotesize \begin{matrix} O \in \Orth(r)\\M \in \GL_{k-r} \\ R \in \mathbb C^{r}\otimes \mathbb C^{k-r} \\ N \in \Sp(E_k^\perp\!/E_k) \end{matrix}}  \right\} \ \simeq \ \Stab_{\GL(E_k)}(\sigma_r)\times \Sp(E_k^\perp\!/E_k) \ , \]
where the $k\times k$ block with asterisks at the top uniquely depends on the $k\times k$ block ${\tiny \begin{bmatrix} O & R \\ & M \end{bmatrix}}$.\\
\indent Let $\langle q_1,\ldots, q_{k-r}\rangle_{\mathbb C}\subset E_k^\perp\!/E_k$ be the subspace spanned by the second tensor-entries in $H$ \eqref{s+H after P^u}, that is the image of the linear map $H: E_k \rightarrow E_k^\perp\!/E_k$. Fix 
\[ h:=\rk(H) \leq \min\{ 2N-2k, k-r\} \ . \] 
Acting by a proper permutation matrix in $\GL_{k-r}\subset \Stab_{\GL(E_k)}(\sigma_r)$, we can reorder the summands in $H$ such that $(q_1,\ldots, q_h)$ is a basis of $\im(H)$. Then for any $h+1 \leq i \leq k-r$ we can write $q_i=\sum_{j=1}^h\alpha_{ij}q_j$ with respect to this basis, for a suitable matrix $(\alpha_{ij})\in \mathbb C^{k-r-h}\otimes \mathbb C^{h}$, getting
\[ \sigma_r + H = \sigma_r + \sum_{i=1}^h \left(e_{-i} +  \sum_{\ell=h+1}^{k-r}\alpha_{\ell i}e_{-\ell} \right)\otimes q_i \ . \]
\noindent Moreover, by applying a basis change via $\GL_{k-r}\subset \Stab_{\GL(E_k)}\subset L$ in the above first tensor-entries, we can move such $\sigma_r+H$ to the point $\sigma_r + \sum_{i=1}^h e_{-i} \otimes q_i$. Thus for any $h$--dimensional subspace $Q=\langle q_1,\ldots, q_h\rangle_\mathbb C \in \Gr(h,E_k^\perp\!/E_k)$ we set
\begin{equation}\label{s+H after P^u with h}
\sigma_r + H_Q := \sigma_r + \sum_{i=1}^h e_{-i} \otimes q_i \ .
\end{equation}

\begin{obs}\label{rmk:h well defined as invariant}
	The dimension $h$ is an invariant for the $P^u$--action too (hence for the $P$--action). Indeed, given $\supp(\sigma_r)=\langle x \in E_k \ | \ \sigma_r(x)\neq 0\rangle_\mathbb C=\langle e_{k-r+1},\ldots, e_{k}\rangle_\mathbb C\subset E_k$, one has
	\[ P^u\cdot (\sigma_r+H_Q) = \{ \sigma_r + H_Q + H' \ | \ H' \in \supp(\sigma_r)^\vee\otimes E_k^\perp\!/E_k\} \ . \]
	Then $h$ is well defined for any $\sigma \in \Sym^2E_k^\vee$ as the invariant $h:=\min\{\rk(Q|Q') \ | \ H_{Q'}\in \supp(\sigma)^\vee \otimes E_k^\perp\!/E_k\}$, or equivalently as
 \[ h:= \rk\left( H _{|_{E_k/\supp(\sigma)}}\right) \leq \min\{ 2N-2k , k-\rk(\sigma)\} \ . \]
\end{obs}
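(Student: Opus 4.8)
The approach is to use the explicit description of the $P^u$-action on the tangent space obtained just above, pin down precisely which part of an element $\sigma_r+H$ is moved by $P^u$, and read off the invariant from the part that stays fixed. First I would make the displayed orbit identity precise: from the computation $g_A\cdot(\sigma_r+H)=\sigma_r+H+A\cdot\sigma_r$ with $A\cdot\sigma_r=\sum_j\sum_{i=k-r+1}^{k}2a_{ij}\,e_{-i}\otimes e_j$, together with the fact that the $B$-part of $P^u$ acts trivially on $\mathfrak p^u$, it follows that as $A$ ranges over $E_k\otimes E_k^\perp\!/E_k$ the correction $A\cdot\sigma_r$ ranges over the entire subspace $\supp(\sigma_r)^\vee\otimes E_k^\perp\!/E_k$ (the coefficients $a_{ij}$ with $i\in\{k-r+1,\dots,k\}$ being unconstrained). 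This yields $P^u\cdot(\sigma_r+H_Q)=\{\sigma_r+H_Q+H'\mid H'\in\supp(\sigma_r)^\vee\otimes E_k^\perp\!/E_k\}$; note that $\supp(\sigma_r)^\vee=\im(\widetilde{\sigma_r})=\Ann(\ker\widetilde{\sigma_r})\subset E_k^\vee$ is intrinsic, where $\widetilde\sigma\colon E_k\to E_k^\vee$ denotes the polarisation of $\sigma$.

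The invariant is then extracted as follows. Viewing $H,H'$ as homomorphisms $E_k\to E_k^\perp\!/E_k$, the elements $H'\in\supp(\sigma_r)^\vee\otimes E_k^\perp\!/E_k$ are exactly those that vanish on $\ker\widetilde{\sigma_r}$ (equivalently, factor through the projection $E_k\twoheadrightarrow E_k/\supp(\sigma_r)$). Hence adding such an $H'$ leaves the restriction of $H$ to $\ker\widetilde{\sigma_r}$ unchanged, so $h:=\rk\big(H|_{E_k/\supp(\sigma_r)}\big)$ is constant along every $P^u$-orbit; on the normal form $\sigma_r+H_Q$ this rank equals $\dim Q$, so it coincides with the $h$ used before. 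Being the rank of a linear map out of a space of dimension $k-\rk(\sigma)$ into $E_k^\perp\!/E_k$ of dimension $2N-2k$, it satisfies $h\le\min\{2N-2k,\,k-\rk(\sigma)\}$, and the reformulation $h=\min\{\rk(Q\,|\,Q')\mid H_{Q'}\in\supp(\sigma)^\vee\otimes E_k^\perp\!/E_k\}$ is then purely formal.

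Finally, to pass from $P^u$-invariance to $P$-invariance I would use $P=LP^u$ and check that the Levi factor $L\simeq\GL(E_k)\times\Sp(E_k^\perp\!/E_k)$ acts on a pair $(\sigma,H)$ by $(g,S)\cdot(\sigma,H)=(g\cdot\sigma,\,S\circ H\circ g^{-1})$; since $\ker\widetilde{g\cdot\sigma}=g(\ker\widetilde\sigma)$ and pre-/post-composition with isomorphisms does not change ranks, $\rk(H|_{E_k/\supp(\sigma)})$ is $L$-invariant as well. Combining the two steps shows $h$ is constant on $P$-orbits, and in particular makes it legitimate to speak of $h$ for an arbitrary $\sigma\in\Sym^2E_k^\vee$ (reduce to some $\sigma_r$ by $\GL(E_k)$). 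The only point that needs genuine care is the linear-algebra identification $\supp(\sigma)^\vee\otimes E_k^\perp\!/E_k=\ker\big(\Hom(E_k,E_k^\perp\!/E_k)\to\Hom(\ker\widetilde\sigma,E_k^\perp\!/E_k)\big)$ together with the intrinsic characterisation $\supp(\sigma)^\vee=\im(\widetilde\sigma)$; once this is in place, everything else is bookkeeping with the formulas already derived.
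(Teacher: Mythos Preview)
Your argument is correct and follows exactly the line the paper intends: the remark in the paper is essentially a statement with its justification sketched inline, and you have supplied the details (the $P^u$--orbit description from the formula $g_A\cdot(\sigma_r+H)=\sigma_r+H+A\cdot\sigma_r$, the identification $\supp(\sigma_r)^\vee=\Ann(\ker\widetilde{\sigma_r})$, and the explicit check of $L$--invariance via $P=LP^u$) in the natural way. One small slip: in your parenthetical you say such $H'$ ``factor through the projection $E_k\twoheadrightarrow E_k/\supp(\sigma_r)$'', but vanishing on $\ker\widetilde{\sigma_r}$ means factoring through $E_k/\ker\widetilde{\sigma_r}\simeq\supp(\sigma_r)$, not through $E_k/\supp(\sigma_r)$; this does not affect the rest of the argument, since your main clause (``vanish on $\ker\widetilde{\sigma_r}$'') is the one you actually use.
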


\indent Up to now, we have used the action of both $L$ and $P^u$ on $\sigma_r+H$ for minimizing the number of summands appearing in $H$. The next step is to find representatives for such \textquotedblleft minimal\textquotedblright \space summands and we do so via the action of $\Sp(E_k^\perp\!/E_k)$.\\
\indent Let $\boldsymbol{\omega'}\in \bigwedge^2\!(E_k^\perp\!/E_k)^\vee$ be the non-degenerate symplectic form obtained as restriction of $\boldsymbol{\omega}\in \bigwedge^2V^\vee$. It is represented by the skew-symmetric matrix $\Omega_{N-k}\in \bigwedge^2\mathbb C^{2N-2k}$. \\
For any subspace $Q \in \Gr(h,E_k^\perp\!/E_k)$, the restriction of $\boldsymbol{\omega'}$ to $Q$ has rank 
\[ \rk\!\big(\boldsymbol{\omega'}_{|_{Q}}\big)=h-\dim (Q\cap Q^\perp)\equiv 0 \ \text{(mod $2$)} \ . \] Notice that: if $Q \subset Q^\perp$, i.e. $Q \in \IG_{\boldsymbol{\omega'}}(h,E_k^\perp\!/E_k)$ and $\rk\!\big( \boldsymbol{\omega'}_{|_{Q}}\big)=0$, then $h\leq N-k$ and the symplectic group $\Sp(E_k^\perp\!/E_k)$ conjugates $\sigma_r+H_Q$ to the point $\sigma_r + \sum_{i=1}^{h}e_{-i}\otimes e_{k+i}$; if $Q\cap Q^\perp=\{0\}$, then $Q\oplus Q^\perp =E_k^\perp\!/E_k$ and $\rk\!\big( \boldsymbol{\omega'}_{_Q})=h$ is even, and the symplectic group $\Sp(E_k^\perp\!/E_k)$ conjugates $\sigma_r+ H_Q$ to the point $\sigma_r + \sum_{i=1}^{\frac{h}{2}}e_{-i}\otimes e_{k+i} + \sum_{i=1}^{\frac{h}{2}}e_{-\frac{h}{2}-i}\otimes e_{-k-i}$.\\
\indent In general, given $t:= h- \rk\!\big( \boldsymbol{\omega'}_{|_Q} \big)$ --hence $h\equiv t$ (mod $2$)--, it holds $Q\cap Q^\perp \in \IG_{\boldsymbol{\omega'}}(t,E_k^\perp\!/E_k)$ and $(Q/(Q\cap Q^\perp))\oplus (Q^\perp\!/(Q\cap Q^\perp))=(Q+Q^\perp)/(Q\cap Q^\perp)$. Thus the action of $\Sp(E_k^\perp\!/E_k)$ conjugates the subspace $H_Q$ in \eqref{s+H after P^u with h} to the subspace
\begin{equation}\label{representative of P-orbits}
H_{(h,t)} := \sum_{i=1}^t e_{-i}\otimes e_{k+i} + \sum_{i=1}^{\frac{h-t}{2}} e_{-t-i}\otimes e_{k+t+i} + \sum_{i=1}^{\frac{h-t}{2}} e_{-t-\frac{h-t}{2}-i}\otimes e_{-k-t-i} \ , 
\end{equation}
and the point $\sigma_r+H_Q$ to the point $\sigma_r+H_{(h,t)}$. This proves the following result.

\begin{teo}\label{thm:orbits in IG}
	For any $2\leq k \leq N-1$, the nilpotent algebra $\mathfrak{p}_k^u$ splits in the finitely many $P_k$--orbits 
	\begin{align}\label{def:orbit}
	\cal O_{(r,h,t)} & :=\left\{ \sigma + H \in \Sym^2\!E_k^\vee \oplus \left(E_k^\vee \otimes E_k^\perp\!/E_k \right) \ \bigg| \ {\footnotesize \begin{matrix} 
        r=\rk(\sigma) \ \ , \ \ 
        h=\rk\!\left(H_{|_{E_k/\supp(\sigma)}}\right) \\
		t=h-\rk\!\left(\boldsymbol{\omega}_{\big|{\im\!\big(H_{|_{E_k/\supp(\sigma)}}\big)}}\right)
        \end{matrix}} \right\}\\ \nonumber
    & \ = P_k \cdot \left(\sigma_r + H_{(h,t)}\right) 
	\end{align}
	where $\supp(\sigma):=\{x \in E_k \ | \ \sigma(x)\neq 0\}\subset E_k$ is the support of the quadratic form $\sigma \in \Sym^2E_k^\vee$, and $\sigma_r+H_{(h,t)}$ is the representative in \eqref{representative of P-orbits}. 
\end{teo}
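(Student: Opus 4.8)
The plan is to assemble the three normalisation steps carried out in the preceding discussion into a single reduction and then to argue that the resulting triple $(r,h,t)$ separates the $P_k$--orbits. Finiteness of the orbit set is not the issue: it is already granted by \cite[Theorem 1.1]{hillerohrle}, since $\ell(P_k)=2$. The real content is twofold: (i) that every $\sigma+H\in\mathfrak{p}_k^u=\Sym^2 E_k^\vee\oplus(E_k^\vee\otimes E_k^\perp/E_k)$ can be $P_k$--conjugated to one of the explicit representatives $\sigma_r+H_{(h,t)}$; and (ii) that $r$, $h$, $t$ are genuine $P_k$--invariants, so that distinct admissible triples yield distinct orbits.

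For (i) I would proceed in the order of the discussion above. First act by $\GL(E_k)\subset L$ on the $\Sym^2 E_k^\vee$--component to bring $\sigma$ to the standard rank--$r$ form $\sigma_r$ (classification of quadratic forms by rank). Next, using the $P^u$--action -- which by the Schur-type computation fixes $\sigma_r$ and translates $H$ by an arbitrary element of $\supp(\sigma_r)^\vee\otimes E_k^\perp/E_k$ -- together with permutations and base changes in $\GL_{k-r}\subset\Stab_{\GL(E_k)}(\sigma_r)$ acting on the first tensor slot, reduce $H$ to $\sum_{i=1}^{h}e_{-i}\otimes q_i$, where $q_1,\dots,q_h$ span a subspace $Q\in\Gr(h,E_k^\perp/E_k)$ and $h=\rk\!\big(H|_{E_k/\supp(\sigma)}\big)$. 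Finally act by $\Sp(E_k^\perp/E_k)\subset L$, which again fixes $\sigma_r$: the symplectic orbit of $Q$ is governed by $\dim(Q\cap Q^\perp)$, equivalently by $t=h-\rk(\boldsymbol{\omega'}|_Q)$, and choosing the standard symplectic/isotropic splitting of $Q$ relative to $Q\cap Q^\perp$ puts $H_Q$ into the normal form $H_{(h,t)}$ of \eqref{representative of P-orbits}. Concatenating these moves, $\sigma+H$ is $P_k$--conjugate to $\sigma_r+H_{(h,t)}$.

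For (ii) I would test each invariant against the full group $P_k$, not merely the subgroup used to normalise it. The rank $r$ is $P_k$--invariant because $P^u$ and $\Sp(E_k^\perp/E_k)$ both act trivially on the $\Sym^2 E_k^\vee$--summand (there is no nonzero equivariant projection back onto it) while $\GL(E_k)$ preserves the rank of a form. For $h$ and $t$ the subtlety is that along a $P^u$--orbit $H$ is only well defined modulo $\supp(\sigma)^\vee\otimes E_k^\perp/E_k$, so one must first restrict $H$ to $E_k/\supp(\sigma)$; this is exactly the well-definedness established in Remark \ref{rmk:h well defined as invariant}, which shows that $h=\rk\!\big(H|_{E_k/\supp(\sigma)}\big)$ and then $t=h-\rk\!\big(\boldsymbol{\omega'}|_{\im(H|_{E_k/\supp(\sigma)})}\big)$ depend only on the $P_k$--orbit (under $L$ these transform covariantly with $\supp(\sigma)$ and with $\boldsymbol{\omega'}$). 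Together, (i) and (ii) give the orbit decomposition and the intrinsic description \eqref{def:orbit} of each $\cal O_{(r,h,t)}$.

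The part I expect to be fiddly rather than deep is precisely this invariance bookkeeping: tracking which subgroup fixes which component, and being careful that $h$ and $t$ must be read off from $H$ modulo the support of $\sigma$, since otherwise the $P^u$--action can alter the naive rank of $H$. I would also pin down the admissible range of the triples -- $0\le r\le k$, $0\le h\le\min\{2N-2k,\,k-r\}$, $0\le t\le\min\{h,\,N-k\}$, and $h\equiv t\pmod 2$ -- both to confirm the list has no redundancies and to have this data ready for the subsequent dimension computation.
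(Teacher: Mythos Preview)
Your proposal is correct and follows essentially the same route as the paper: the theorem is proved by the discussion immediately preceding its statement, via exactly the three-step normalisation you describe ($\GL(E_k)$ for $\sigma$, then $P^u$ together with $\GL_{k-r}\subset\Stab_{\GL(E_k)}(\sigma_r)$ to reduce $H$ to $H_Q$, then $\Sp(E_k^\perp/E_k)$ to reach $H_{(h,t)}$), with the invariance of $h$ handled in Remark~\ref{rmk:h well defined as invariant}. Your treatment of point~(ii) is somewhat more explicit than the paper's about why $r$ and $t$ are $P_k$--invariant, but the underlying argument is identical.
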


\begin{obs*}
    We stress out that the triplet of invariants $(r,h,t)$ satisfies the following inequalities:
    \begin{equation}\label{constraints on invariants}
r \leq k \ \ \ , \ \ \ h \leq \min\{k-r,2N-2k\} \ \ \ , \ \ \ t \leq \min\{h,N-k\} \ \ \ , \ \ \ t \equiv h \ \text{(mod $2$)} \ . 
\end{equation}
    \end{obs*}

\subsection{Inclusions among orbit closures}

Next we show that the poset graph in \autoref{figure:graph cominuscule} does not applying to the secant variety of lines to an isotropic Grassmannian $\IG(k,2N)$ by proving that the tangent $\Sp_{2N}$--orbits are not totally ordered: by homogeneity of $\IG(k,2N)$, it is enough to prove that the $P_k$--orbits in the tangent space $T_{[v_{\omega_k}]}\IG(k,2N)$ (the ones in Theorem \ref{thm:orbits in IG}) are not so.

\begin{prop}\label{prop:inclusions IG}
	The inclusions among the closures of the $P_k$--orbits $\cal O_{(r,h,t)}$ in $\mathfrak{p}_k^u$ are ruled by the degeneracies of the ranks in the matrix spaces
	\[ \Sym^2\!E_k^\vee \ \ \ , \ \ \ \left(\frac{E_k}{\supp(\sigma)}\right)^\vee\otimes (E_k^\perp\!/E_k) \ \ \ , \ \ \ \bigwedge^2\left(\frac{(E_k/\supp(\sigma))^\perp}{E_k/\supp(\sigma)}\right)^\vee \ . \]
	More precisely, for any triplet $(r,h,t)$ satisfying \eqref{constraints on invariants}, the minimal inclusions are:
	\begin{enumerate}
	\item[$i)$] $\cal O_{(r,h,t)}\subset \overline{\cal O_{(r,h+1,t+1)}}$ ,
    \item[$ii)$] $\cal O_{(r,h,t)}\subset \overline{\cal O_{(r,h+1,t-1)}}$ ,
	\item[$iii)$] $\cal O_{(r,h,t)}\subset \overline{\cal O_{(r+1,h-1,t+1)}}$ ,
    \item[$iv)$] $\cal O_{(r,h,t)}\subset \overline{\cal O_{(r+1,h-1,t-1)}}$ ,
	\end{enumerate}
    assumed that all the above triplets satisfy the constraints \eqref{constraints on invariants} as well. In particular, for any triplet of invariants $(r,h,t)\neq (k,0,0)$ it holds $\cal O_{(r,h,t)}\subset \overline{\cal O_{(k-1,1,1)}}$.
\end{prop}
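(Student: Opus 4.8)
The plan is to establish the four covering inclusions $(i)$–$(iv)$ by producing, in each case, an explicit one–parameter degeneration inside the tangent space, and then to deduce the last assertion by chaining them.

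\emph{Reduction to a degeneration.} By $\Sp_{2N}$–homogeneity of $\IG(k,2N)$ it is enough to work inside $T_{[v_{\omega_k}]}\IG(k,2N)\simeq \mathfrak p_k^u=\Sym^2 E_k^\vee\oplus (E_k^\vee\otimes E_k^\perp/E_k)$, where every orbit closure $\overline{\cal O_{(r',h',t')}}$ is closed and $P_k$–stable. Thus, to prove $\cal O_{(r,h,t)}\subseteq\overline{\cal O_{(r',h',t')}}$ it suffices to produce a morphism $\gamma\colon\mathbb A^1\to\mathfrak p_k^u$ with $\gamma(\varepsilon)\in\cal O_{(r',h',t')}$ for $\varepsilon\neq 0$ and $\gamma(0)\in\cal O_{(r,h,t)}$; and since by Theorem~\ref{thm:orbits in IG} a $P_k$–orbit in $\mathfrak p_k^u$ is completely determined by its triple $\bigl(\rk\sigma,\ \rk(H|_{E_k/\supp\sigma}),\ \text{isotropic defect of }\im H\bigr)$, one only has to check that $\gamma(\varepsilon)$ and $\gamma(0)$ carry the prescribed triples — a rank computation of exactly the kind used to extract the normal form \eqref{representative of P-orbits}. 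That these four are moreover \emph{all} the covering relations is then a matter of a dimension count for the orbits $\cal O_{(r,h,t)}$.

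\emph{The moves fixing $r$ ($i$ and $ii$).} Starting from the representative $\sigma_r+H_{(h,t)}$ of \eqref{representative of P-orbits}, put
\[
\gamma(\varepsilon)\ :=\ \sigma_r+H_{(h,t)}+\varepsilon\,\bigl(e_{-(h+1)}\otimes u\bigr),
\]
where $u\in (E_k^\perp/E_k)\setminus\im H_{(h,t)}$ is chosen according to the move: for $(i)$ we take $u$ in $(\im H_{(h,t)})^{\perp}$, so that $\langle u\rangle$ enlarges the isotropic radical of $\im H_{(h,t)}$ and raises the defect by one; for $(ii)$ we take $u$ with $\boldsymbol\omega(u,-)$ non-zero on that radical, so that $u$ becomes a symplectic partner of a radical vector and lowers the defect by one. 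Such a $u$ exists because $h+t\leq 2N-2k$ (strictly, in case $(i)$, once the target triple is admissible), resp.\ because the radical is non-zero when $t\geq 1$. For $\varepsilon\neq 0$: the quadratic part is untouched, so $\rk\sigma=r$; the new first–tensor index $h+1\leq k-r$ is outside $\supp(\sigma_r)$, so $\rk(H|_{E_k/\supp\sigma})=h+1$; and the defect of the enlarged image is $t\pm1$. For $\varepsilon=0$ one recovers $\sigma_r+H_{(h,t)}$. Hence $\cal O_{(r,h,t)}\subseteq\overline{\cal O_{(r,h+1,t\pm1)}}$.

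\emph{The moves raising $r$ ($iii$ and $iv$).} Here the crucial observation is that a rank-one summand of $H$ supported on $\supp(\sigma)$ is \emph{invisible}: it lies in $\supp(\sigma)^\vee\otimes E_k^\perp/E_k$ and is absorbed by $P^u$ (cf.\ Remark~\ref{rmk:h well defined as invariant}). Put
\[
\gamma(\varepsilon)\ :=\ \bigl(\sigma_r+\varepsilon\,e_{-(k-r)}^{2}\bigr)+H_{(h-1,\,t\pm1)}+e_{-(k-r)}\otimes v,
\]
where $v\in (E_k^\perp/E_k)\setminus\im H_{(h-1,t\pm1)}$ is a symplectic partner of the radical of $\im H_{(h-1,t+1)}$ for $(iii)$, and is orthogonal to $\im H_{(h-1,t-1)}$ for $(iv)$. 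For $\varepsilon\neq 0$: $\sigma_r+\varepsilon e_{-(k-r)}^{2}$ has rank $r+1$ with $e_{k-r}$ in its support, so the summand $e_{-(k-r)}\otimes v$ is invisible and $\gamma(\varepsilon)$ has triple $(r+1,h-1,t\pm1)$. For $\varepsilon=0$: $\sigma_r$ has rank $r$, the index $k-r$ now lies in $E_k/\supp(\sigma_r)$ so that summand becomes visible and pushes $\rk(H|_{E_k/\supp\sigma})$ up to $h$, while the choice of $v$ forces the defect of $\im H$ to be exactly $t$. Hence $\gamma(0)\in\cal O_{(r,h,t)}$ and $\cal O_{(r,h,t)}\subseteq\overline{\cal O_{(r+1,h-1,t\pm1)}}$.

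\emph{The global bound and the main difficulty.} Granting $(i)$–$(iv)$ and transitivity of $\subseteq\overline{(\cdot)}$, fix $(r,h,t)\neq(k,0,0)$, so $r\leq k-1$. Applying $(iv)$ repeatedly (lowering $h$ and $t$ together) and, once $t=0$, interleaving $(iii)$ and $(iv)$ to exhaust the remaining symplectic part of $\im H$, we lower $h$ towards $0$ while raising $r$ by one at each step; since $h+r\leq k$ is preserved throughout, the $r$–coordinate never reaches $k$, and we stop either at $(k-1,1,1)$ (exactly when $r+h=k$) or at a triple $(\rho,0,0)$ or $(\rho,1,1)$ with $\rho<k$. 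In the latter case move $(i)$ yields $(\rho,1,1)$, and the loop $(\rho,1,1)\xrightarrow{(iv)}(\rho+1,0,0)\xrightarrow{(i)}(\rho+1,1,1)$ climbs $\rho$ up to $k-1$; all intermediate triples obey \eqref{constraints on invariants}. Therefore $\cal O_{(r,h,t)}\subseteq\overline{\cal O_{(k-1,1,1)}}$. The real work — and the only genuine obstacle — is the bookkeeping in the two middle paragraphs: determining, for each $\gamma(\varepsilon)$, the \emph{exact} triple (so the family neither undershoots nor overshoots $\cal O_{(r',h',t')}$), which means tracking which first–tensor indices fall inside $\supp(\sigma)$, the behaviour of $\boldsymbol\omega$ on the pertinent subspaces of $E_k^\perp/E_k$, and the admissibility of the auxiliary vectors $u$ and $v$ within the ranges \eqref{constraints on invariants}.
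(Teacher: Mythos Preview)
Your proof is correct and follows essentially the same strategy as the paper: exhibit, for each of $(i)$--$(iv)$, an explicit one-parameter family in $\mathfrak p_k^u$ whose generic member sits in the larger orbit and whose limit lands in $\cal O_{(r,h,t)}$, tracking the three rank invariants along the way. The concrete families differ only cosmetically (e.g.\ for $(ii)$ you add a fresh summand while the paper rescales an existing one; for $(iii)$--$(iv)$ you start from $H_{(h-1,t\pm1)}$ plus a correction, whereas the paper perturbs $H_{(h,t)}$ itself).

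Two small remarks. First, for the final assertion the paper does not chain $(i)$--$(iv)$ but writes down a single degeneration $\eta(\epsilon)=\sigma_r+\tfrac1\epsilon(e_{-k+r}^2+\cdots+e_{-2}^2)+H_{(h,t)}$, which lies in $\cal O_{(k-1,1,1)}$ for $\epsilon$ finite (the $P^u$-action makes all but $e_{-1}\otimes e_{k+1}$ invisible) and limits to $\sigma_r+H_{(h,t)}$; this sidesteps the bookkeeping in your last paragraph, where your claim that ``$h+r\le k$ is preserved throughout'' is slightly off since move $(i)$ increases $r+h$ --- though your chain still terminates correctly. Second, your appeal to a dimension count for minimality is a forward reference (the dimensions are only computed in Proposition~\ref{prop:dimensions IG}); the paper instead argues that every other inclusion factors through compositions of $(i)$--$(iv)$.
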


\begin{cor}
	The orbit $\cal O_{(k,0,0)}=P_k\cdot \sigma_k$ is dense in $\mathfrak{p}_k^u$, and its complement
	\[\overline{\cal O_{(k-1,1,1)}} = \mathfrak{p}_k^u\setminus \cal O_{(k,0,0)}\] 
	is the hypersurface defined by the vanishing of the determinant in $\Sym^2\!E_k^\vee$. 
\end{cor}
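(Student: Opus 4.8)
The plan is to deduce the corollary formally from Theorem~\ref{thm:orbits in IG}, which presents $\mathfrak{p}_k^u$ as the disjoint union of the orbits $\cal O_{(r,h,t)}$, together with the last sentence of Proposition~\ref{prop:inclusions IG}. First I would pin down the orbit $\cal O_{(k,0,0)}$ explicitly. By \eqref{def:orbit}, a point $\sigma+H$ lies in $\cal O_{(k,0,0)}$ exactly when $r=\rk(\sigma)=k$; but $\rk(\sigma)=k$ means the quadratic form $\sigma\in\Sym^2\!E_k^\vee$ is non-degenerate, so $\supp(\sigma)=E_k$, the quotient $E_k/\supp(\sigma)$ vanishes, and the remaining invariants are forced to $h=t=0$. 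Hence
\[ \cal O_{(k,0,0)} \;=\; \bigl\{\,\sigma+H\in\mathfrak{p}_k^u \;:\; \sigma\ \text{non-degenerate}\,\bigr\}, \]
which is precisely the complement in $\mathfrak{p}_k^u$ of the vanishing locus of $\det$ on the $\Sym^2\!E_k^\vee$-factor (the determinant of a quadratic form being defined up to a non-zero scalar, its zero locus is intrinsic). Since $\mathfrak{p}_k^u$ is an affine space, hence irreducible, and $\cal O_{(k,0,0)}$ is the complement of a hypersurface in it, $\cal O_{(k,0,0)}$ is a dense open subset; this gives the first assertion.

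For the second assertion I would argue by a double inclusion. On one hand, on the orbit $\cal O_{(k-1,1,1)}$ one has $\rk(\sigma)=k-1<k$, so $\cal O_{(k-1,1,1)}\subseteq\{\det\sigma=0\}$; as $\{\det\sigma=0\}$ is Zariski-closed this yields $\overline{\cal O_{(k-1,1,1)}}\subseteq\{\det\sigma=0\}=\mathfrak{p}_k^u\setminus\cal O_{(k,0,0)}$. On the other hand, by Theorem~\ref{thm:orbits in IG} the set $\mathfrak{p}_k^u\setminus\cal O_{(k,0,0)}$ is the union of all orbits $\cal O_{(r,h,t)}$ with $(r,h,t)\neq(k,0,0)$ — equivalently, with $r\leq k-1$ in view of the previous paragraph — and the last sentence of Proposition~\ref{prop:inclusions IG} states that each such orbit is contained in $\overline{\cal O_{(k-1,1,1)}}$. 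Therefore $\mathfrak{p}_k^u\setminus\cal O_{(k,0,0)}\subseteq\overline{\cal O_{(k-1,1,1)}}$, and combining the two inclusions gives the equality $\overline{\cal O_{(k-1,1,1)}}=\mathfrak{p}_k^u\setminus\cal O_{(k,0,0)}$, i.e. the determinantal hypersurface $\{\det\sigma=0\}$ inside $\mathfrak{p}_k^u=\Sym^2\!E_k^\vee\oplus(E_k^\vee\otimes E_k^\perp\!/E_k)$.

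The argument is essentially bookkeeping once Theorem~\ref{thm:orbits in IG} and Proposition~\ref{prop:inclusions IG} are available, so no serious obstacle arises. The one point deserving a line of care is the identification $\cal O_{(k,0,0)}=\{\sigma\ \text{non-degenerate}\}$: one must check from the definition of the invariants in \eqref{def:orbit} that $r=k$ genuinely forces $h=t=0$, which is immediate since $\supp(\sigma)=E_k$ makes the restricted map $H|_{E_k/\supp(\sigma)}$ vanish. It is also worth remarking, for consistency with the general fact that orbit closures are irreducible, that the symmetric determinant is an irreducible polynomial of degree $k$, so $\{\det\sigma=0\}$ is indeed an irreducible hypersurface.
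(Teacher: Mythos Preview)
Your proposal is correct and is exactly the argument the paper has in mind: the corollary is stated without proof, as an immediate consequence of Theorem~\ref{thm:orbits in IG} and the last sentence of Proposition~\ref{prop:inclusions IG}, and you have simply spelled out the obvious bookkeeping (identifying $\cal O_{(k,0,0)}$ with the non-degenerate locus $\{\det\sigma\neq 0\}$ and taking closures).
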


\begin{figure}[h]
\begin{center}
\begin{subfigure}[l]{0.45\linewidth}
        \begin{center}
\begin{tikzpicture}[scale=1.5,tdplot_rotated_coords,
                    rotated axis/.style={->,purple,ultra thick},
                    blackBall/.style={ball color = black!20},
                    cyanBall/.style={ball color = cyan},
                    redBall/.style={ball color = red},
                    blueBall/.style={ball color = blue},
                    borderBall/.style={ball color = cyan,opacity=.35},
                    redborderBall/.style={ball color = red,opacity=0.35},
                    yellowBall/.style={ball color = yellow},
                    greenBall/.style={ball color = green},
                    very thick]

\foreach \x in {0,1,2}
   \foreach \y in {0,1,2}
      \foreach \z in {0,1,2}{
           \ifthenelse{  \lengthtest{\x pt < 2pt}  }{
             \draw[dashed, line width=0.5pt, color=gray] (\x,\y,\z) -- (\x+1,\y,\z);
             \shade[rotated axis,blackBall] (\x,\y,\z) circle (0.025cm); 
           }{}
           \ifthenelse{  \lengthtest{\y pt < 2pt}  }{
               \draw[dashed, line width=0.5pt, color=gray] (\x,\y,\z) -- (\x,\y+1,\z);
               \shade[rotated axis,blackBall] (\x,\y,\z) circle (0.025cm);
           }{}
           \ifthenelse{  \lengthtest{\z pt < 2pt}  }{
               \draw[dashed, line width=0.5pt, color=gray] (\x,\y,\z) -- (\x,\y,\z+1);
               \shade[rotated axis,blackBall] (\x,\y,\z) circle (0.025cm);
           }{}
}

\shade[rotated axis,yellowBall] (0,0,0) circle (0.05cm); 
\shade[rotated axis,blackBall] (1,0,0) circle (0.05cm); 
\shade[rotated axis,yellowBall] (2,0,0) circle (0.05cm);
\shade[rotated axis,blackBall] (0,1,0) circle (0.05cm); 
\shade[rotated axis,yellowBall] (1,1,0) circle (0.05cm); 
\shade[rotated axis,blackBall] (2,1,0) circle (0.05cm);
\shade[rotated axis,yellowBall] (0,2,0) circle (0.05cm); 
\shade[rotated axis,blackBall] (1,2,0) circle (0.05cm); 
\shade[rotated axis,yellowBall] (2,2,0) circle (0.05cm);

\shade[rotated axis,yellowBall] (0,0,1) circle (0.05cm); 
\shade[rotated axis,blackBall] (1,0,1) circle (0.05cm); 
\shade[rotated axis,cyanBall] (2,0,1) circle (0.05cm);
\shade[rotated axis,blackBall] (0,1,1) circle (0.05cm); 
\shade[rotated axis,redBall] (1,1,1) circle (0.05cm); 
\shade[rotated axis,blackBall] (2,1,1) circle (0.05cm);
\shade[rotated axis,yellowBall] (0,2,1) circle (0.05cm); 
\shade[rotated axis,blackBall] (1,2,1) circle (0.05cm); 
\shade[rotated axis,cyanBall] (2,2,1) circle (0.05cm);

\shade[rotated axis,cyanBall] (0,0,2) circle (0.05cm); 
\shade[rotated axis,blackBall] (1,0,2) circle (0.05cm); 
\shade[rotated axis,blueBall] (2,0,2) circle (0.05cm);
\shade[rotated axis,blackBall] (0,1,2) circle (0.05cm); 
\shade[rotated axis,blueBall] (1,1,2) circle (0.05cm); 
\shade[rotated axis,blackBall] (2,1,2) circle (0.05cm);
\shade[rotated axis,cyanBall] (0,2,2) circle (0.05cm); 
\shade[rotated axis,blackBall] (1,2,2) circle (0.05cm); 
\shade[rotated axis,blueBall] (2,2,2) circle (0.05cm);

\shade[rotated axis,borderBall] (2,0,1) circle (0.1cm);
\shade[rotated axis,borderBall] (2,2,1) circle (0.1cm);
\shade[rotated axis,borderBall] (0,2,2) circle (0.1cm);
\shade[rotated axis,borderBall] (0,0,2) circle (0.1cm);
\shade[rotated axis,redborderBall] (1,1,1) circle (0.1cm);

\draw (1,1,2) edge [->, shorten >= 8pt, shorten <= 8pt,color=blue!30] node [left] {} (2,0,1);
\draw (2,0,1) edge [->,shorten >= 8pt, shorten <= 8pt,color=cyan!50] node [left] {} (1,1,1);
\draw (0,2,2) edge [->,shorten >= 8pt, shorten <= 8pt,color=cyan!50] node [left] {} (1,1,1);
\draw (0,0,2) edge [->,shorten >= 8pt, shorten <= 8pt,color=cyan!50] node [left] {} (1,1,1);
\draw (2,2,1) edge [->,shorten >= 8pt, shorten <= 8pt,color=cyan!50] node [left] {} (1,1,1);
\draw (2,0,2) edge [->,shorten >= 8pt, shorten <= 8pt,color=blue!30] node [left] {} (1,1,2);
\draw (2,2,2) edge [->,shorten >= 8pt, shorten <= 8pt,color=blue!30] node [left] {} (1,1,2);

\draw (2,0,0.85) node[scale=0.8, below] {\textcolor{cyan}{$(0,1,-1)$}};
\draw (1.3,1,1.0) node[scale=0.8, below] {\textcolor{red}{$(0,0,0)$}};
\draw (2,2.5,1.1) node[scale=0.8, below] {\textcolor{cyan}{$(0,1,1)$}};
\draw (0.05,-0.6,2) node[scale=0.8, below] {\textcolor{cyan}{$(1,-1,-1)$}};
\draw (2,-0,1.85) node[scale=0.8, below] {\textcolor{blue}{$(1,1,-1)$}};
\draw (0.3,0.6,2) node[scale=0.8, below] {\textcolor{blue}{$(1,0,0)$}};
\draw (2,2.5,2.1) node[scale=0.8, below] {\textcolor{blue}{$(1,1,1)$}};
\draw (0.05,2,2.4) node[scale=0.8, below] {\textcolor{cyan}{$(1,-1,1)$}};

\end{tikzpicture}
\end{center}
\caption{}
\label{fig:lattice orbit inclusions.a}

\end{subfigure}
\hfill
\begin{subfigure}[r]{0.5\linewidth}
        \begin{center}
\begin{tikzpicture}[scale=1.5,tdplot_rotated_coords,
                    rotated axis/.style={->,purple,ultra thick},
                    blackBall/.style={ball color = black!20},
                    cyanBall/.style={ball color = cyan},
                    redBall/.style={ball color = red},
                    blueBall/.style={ball color = blue},
                    borderBall/.style={ball color = cyan,opacity=.35},
                    redborderBall/.style={ball color = red,opacity=0.35},
                    yellowborderBall/.style={ball color = yellow,opacity=.35},
                    yellowBall/.style={ball color = yellow},
                    greenBall/.style={ball color = green},
                    very thick]

\foreach \x in {0,1,2}
   \foreach \y in {0,1,2}
      \foreach \z in {0,1,2}{
           \ifthenelse{  \lengthtest{\x pt < 2pt}  }{
             \draw[dashed, line width=0.5pt, color=gray] (\x,\y,\z) -- (\x+1,\y,\z);
             \shade[rotated axis,blackBall] (\x,\y,\z) circle (0.025cm); 
           }{}
           \ifthenelse{  \lengthtest{\y pt < 2pt}  }{
               \draw[dashed, line width=0.5pt, color=gray] (\x,\y,\z) -- (\x,\y+1,\z);
               \shade[rotated axis,blackBall] (\x,\y,\z) circle (0.025cm);
           }{}
           \ifthenelse{  \lengthtest{\z pt < 2pt}  }{
               \draw[dashed, line width=0.5pt, color=gray] (\x,\y,\z) -- (\x,\y,\z+1);
               \shade[rotated axis,blackBall] (\x,\y,\z) circle (0.025cm);
           }{}
}

\shade[rotated axis,yellowBall] (0,0,0) circle (0.05cm); 
\shade[rotated axis,blackBall] (1,0,0) circle (0.05cm); 
\shade[rotated axis,yellowBall] (2,0,0) circle (0.05cm);
\shade[rotated axis,blackBall] (0,1,0) circle (0.05cm); 
\shade[rotated axis,yellowBall] (1,1,0) circle (0.05cm); 
\shade[rotated axis,blackBall] (2,1,0) circle (0.05cm);
\shade[rotated axis,yellowBall] (0,2,0) circle (0.05cm); 
\shade[rotated axis,blackBall] (1,2,0) circle (0.05cm); 
\shade[rotated axis,yellowBall] (2,2,0) circle (0.05cm);

\shade[rotated axis,yellowBall] (0,0,1) circle (0.05cm); 
\shade[rotated axis,blackBall] (1,0,1) circle (0.05cm); 
\shade[rotated axis,cyanBall] (2,0,1) circle (0.05cm);
\shade[rotated axis,blackBall] (0,1,1) circle (0.05cm); 
\shade[rotated axis,redBall] (1,1,1) circle (0.05cm); 
\shade[rotated axis,blackBall] (2,1,1) circle (0.05cm);
\shade[rotated axis,yellowBall] (0,2,1) circle (0.05cm); 
\shade[rotated axis,blackBall] (1,2,1) circle (0.05cm); 
\shade[rotated axis,cyanBall] (2,2,1) circle (0.05cm);

\shade[rotated axis,cyanBall] (0,0,2) circle (0.05cm); 
\shade[rotated axis,blackBall] (1,0,2) circle (0.05cm); 
\shade[rotated axis,blueBall] (2,0,2) circle (0.05cm);
\shade[rotated axis,blackBall] (0,1,2) circle (0.05cm); 
\shade[rotated axis,blueBall] (1,1,2) circle (0.05cm); 
\shade[rotated axis,blackBall] (2,1,2) circle (0.05cm);
\shade[rotated axis,cyanBall] (0,2,2) circle (0.05cm); 
\shade[rotated axis,blackBall] (1,2,2) circle (0.05cm); 
\shade[rotated axis,blueBall] (2,2,2) circle (0.05cm);

\shade[rotated axis,yellowborderBall] (0,0,1) circle (0.1cm);
\shade[rotated axis,yellowborderBall] (0,2,1) circle (0.1cm);
\shade[rotated axis,yellowborderBall] (2,0,0) circle (0.1cm);
\shade[rotated axis,yellowborderBall] (2,2,0) circle (0.1cm);
\shade[rotated axis,redborderBall] (1,1,1) circle (0.1cm);

\draw (1,1,1) edge [->,shorten >= 8pt, shorten <= 8pt,color=green] node [left] {} (0,0,1);
\draw (1,1,1) edge [->,shorten >= 8pt, shorten <= 8pt,color=green] node [left] {} (0,2,1);
\draw (1,1,1) edge [->,shorten >= 8pt, shorten <= 8pt,color=green] node [left] {} (2,0,0);
\draw (1,1,1) edge [->,shorten >= 8pt, shorten <= 8pt,color=green] node [left] {} (2,2,0);
\draw (2,2,0) edge [->,shorten >= 8pt, shorten <= 8pt,color=yellow] node [left] {} (1,1,0);
\draw (1,1,0) edge [->,shorten >= 8pt, shorten <= 8pt,color=yellow] node [left] {} (0,0,0);
\draw (2,0,0) edge [->,shorten >= 8pt, shorten <= 8pt,color=yellow] node [left] {} (1,1,0);
\draw (1,1,0) edge [->,shorten >= 8pt, shorten <= 8pt,color=yellow] node [left] {} (0,2,0);

\draw (0,-0.7,1) node[scale=0.8, below] {\textcolor{green}{$(0,-1,-1)$}};
\draw (0,2,1.4) node[scale=0.8, below] {\textcolor{green}{$(0,-1,1)$}};
\draw (2,0.1,-0.1) node[scale=0.8, below] {\textcolor{green}{$(-1,1,-1)$}};
\draw (1.4,2.2,0) node[scale=0.8, below] {\textcolor{green}{$(-1,1,1)$}};

\end{tikzpicture}
\end{center}
\caption{}
\label{fig:lattice orbit inclusions.b}
\end{subfigure}

\end{center}
\caption{The orbit inclusions are described in the above lattice cube, centered at the red point $(0,0,0)$ corresponding to a fixed triplet of invariants $(r,h,t)$. Any other point $(a,b,c)$ in the lattice corresponds to the triplet (not necessarily of invariants) $(r+a,h+b,t+c)$. The gray points are triplets which are not of invariants (constraints \eqref{constraints on invariants} are not satisfied). \autoref{fig:lattice orbit inclusions.a} shows the orbits in the lattice degenerating to $\cal O_{(r,h,t)}$: the cyan arrows denote the minimal degenerations. \autoref{fig:lattice orbit inclusions.b} shows the orbits in the lattice to which $\cal O_{(r,h,t)}$ degenerates: the green arrows denote the minimal degenerations. }
\label{fig:lattice orbit inclusions}
\end{figure}

\begin{proof}[Proof to Proposition \ref{prop:inclusions IG}]
Fix an orbit $\cal O_{(r,h,t)}$. Note that the invariant $r$ (as rank in $\Sym^2E_k^\vee$) can only decrease during orbit degenerations. However this is not true for the values $h,t$ since they can also increase after the $P^u$--action. \\
\indent First, we analyse the possible degenerations to $\cal O_{(r,h,t)}$ from orbits $\cal O_{(r+a,h+b,t+c)}$ such that $a \in \{0,1\}$ and $b,c\in\{-1,0,1\}$: these correspond to the lattice points in \autoref{fig:lattice orbit inclusions}. For any $b+c \equiv 1$ (mod $2$) the corresponding triplet does not satisfy the constraints \eqref{constraints on invariants}, hence it does not define an orbit (cf. gray points in the figure). Next, we focus on the candidates for minimal degenerations (cf. cyan points in \autoref{fig:lattice orbit inclusions.a}).
\begin{enumerate}
\item[$i)$] The sequence $\alpha(\epsilon) := \sigma_r+H_{(h,t)}+\frac{1}{\epsilon}e_{-h-1}\otimes e_{k+h+1}$ lies in $\cal O_{(r,h+1,t+1)}$ and has limit $\sigma_r+H_{(h,t)}$ for $\epsilon \to \infty$.
\item[$ii)$] The sequence $\beta(\epsilon):=\sigma_r + H_{(h+1,t-1)} + \big(\frac{1}{\epsilon}-1 \big)e_{-t-\frac{h-t+2}{2}}\otimes e_{-k-t}$ lies in $\cal O_{(r,h+1,t-1)}$ and has limit $\sigma_r + H_{(h,t)}$ for $\epsilon \to \infty$.
\item[$iii)$] The sequence 
	$\gamma(\epsilon):= \sigma_r + \frac{1}{\epsilon}e_{-h}^2 + H_{(h,t)}$ lies in $\cal O_{(r+1,h-1,t+1)}$ (since the $P^u$-action allows to annihilate the summand $e_{-h}\otimes e_{-k-t-\frac{h-t}{2}}$ in $H_{(h,t)}$) and has limit $\sigma_r+H_{(h,t)}$ for $\epsilon \to \infty$.
 \item[$iv)$] The sequence $\delta(\epsilon):= \sigma_{r+1} + \left(\frac{1}{\epsilon} -1 \right) e_{-k}^2 +H_{(h-1,t-1)} + e_{-k}\otimes e_{-k-t}$ lies in $\cal O_{(r+1,h-1,t-1)}$ (since the $P^u$-action allows to annihilate the summand $e_{-k}\otimes e_{-k-t}$) and it has limit $e_{-k+1}^2+ \ldots + e_{-k+r}^2 + H_{(h-1,t-1)} + e_{-k}\otimes e_{-k-t}$ lying in $\cal O_{(r,h,t)}$.
\end{enumerate}
\noindent Combining the degenerations in $i),ii),iii),iv)$ one also gets the degenerations both from the blue points in \autoref{fig:lattice orbit inclusions.a}, and to the yellow points in \autoref{fig:lattice orbit inclusions.b}.\\
\indent Second, we observe that the degenerations of all the other orbits outside the cube in \autoref{fig:lattice orbit inclusions} can also be obtained from the ones above: the degenerations $i),ii)$ allow to move on the plane $(h,t)$ for a fixed $r$, while the degenerations $iii),iv)$ allow to go down with the rank $r$. This also proves that $i),ii),iii),iv)$ are minimal. \\
\indent Finally, we prove that the orbit $\cal O_{(k-1,1,1)}$ degenerates to any other orbit but $\cal O_{(k,0,0)}$: this is straightforward for $\cal O_{(r,0,0)}$, while for $h\geq 1$ the sequence $\eta(\epsilon):= \sigma_r + \frac{1}{\epsilon}\big( e_{-k+r}^2 + \ldots + e_{-2}^2 \big) + H_{(h,t)}$ lies in $\cal O_{(k-1,1,1)}$ (the $P^u$-action conjugates $H_{(h,t)}$ to $e_{-1}\otimes e_{k+1}$) and has limit $\sigma_r+ H_{(h,t)}$. \\
\end{proof}

\subsection{Dimensions of the orbits} 

We conclude the study of the $P_k$-orbits in $\mathfrak{p}_k^u$ by computing their dimensions. From the description of $\mathfrak{p}^u\simeq \Sym^2E_k^\vee \otimes E_k^\perp\!/E_k$ we know that  
\[ \dim \IG_{\boldsymbol{\omega}}(k,2N)=\dim \mathfrak{p}_k^u =\frac{k(k+1)}{2}+k(2N-2k) = k\left( \frac{4N-3k+1}{2} \right) \ . \]

\begin{obs}\label{rmk:dim IG}
	One can recover the above dimension also by noticing that $\IG_{\boldsymbol{\omega}}(k,2N)$ is the kernel of the section $s_{\boldsymbol{\omega}} \in H^0\big(\Gr(k,2N),\bigwedge^2\cal U\big)$ corresponding to the symplectic form $\boldsymbol{\omega}\in \bigwedge^2(\mathbb C^{2N})^\vee$ (via Borel--Weil's Theorem). Thus the fiber dimension theorem implies
	\[\dim \IG_{\boldsymbol{\omega}}(k,2N)=\dim \Gr(k,2N)-\rk \left(\bigwedge^2\cal U\right)=k\left( \frac{4N-3k+1}{2} \right) \ . \]
\end{obs}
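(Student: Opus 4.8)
The plan is to exhibit $\IG_{\boldsymbol{\omega}}(k,2N)$ as the zero locus of a section of an explicit vector bundle of rank $\binom{k}{2}$ on $\Gr(k,2N)$, and then to compute its codimension by a tangent-space calculation at one point. Let $\mathcal{U}\subset\mathcal{O}^{\oplus 2N}$ be the tautological rank-$k$ subbundle, with fibre $W$ over $[W]\in\Gr(k,2N)$, and let $\mathcal{E}:=\bigwedge^2\mathcal{U}^\vee$, a bundle of rank $\binom{k}{2}$ with fibre $\bigwedge^2 W^\vee$ over $[W]$. Restriction of alternating $2$-forms gives a surjection $\bigwedge^2(\mathbb{C}^{2N})^\vee\otimes\mathcal{O}\twoheadrightarrow\mathcal{E}$, and Borel--Weil (Bott) identifies $H^0(\Gr(k,2N),\mathcal{E})=\bigwedge^2(\mathbb{C}^{2N})^\vee$. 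A symplectic form $\boldsymbol{\omega}$ then corresponds to a section $s_{\boldsymbol{\omega}}$ with $s_{\boldsymbol{\omega}}([W])=\boldsymbol{\omega}|_{\bigwedge^2 W}$, and by construction $Z(s_{\boldsymbol{\omega}})=\{[W]\colon W\subseteq W^\perp\}=\IG_{\boldsymbol{\omega}}(k,2N)$ as sets.

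Next I would combine two observations. First, as the homogeneous space $\Sp_{2N}/P_k$, $\IG_{\boldsymbol{\omega}}(k,2N)$ is smooth, connected, hence irreducible of some dimension $d$; being moreover the (nonempty) zero locus of a section of the rank-$\binom{k}{2}$ bundle $\mathcal{E}$, it satisfies the standard lower bound $d\ge\dim\Gr(k,2N)-\binom{k}{2}$. Second, I would compute the tangent space at the reference point $[E_k]=[v_{\omega_k}]$: in the Plücker chart where a nearby $W$ is the graph of a linear map $\phi=(\phi_1,\phi_2)\colon E_k\to E_k^\vee\oplus(E_k^\perp/E_k)$, the equation $\boldsymbol{\omega}|_W=0$ reads, for all $x,x'\in E_k$,
\[
\boldsymbol{\omega}\big(x,\phi_1(x')\big)-\boldsymbol{\omega}\big(x',\phi_1(x)\big)+\boldsymbol{\omega}\big(\phi_2(x),\phi_2(x')\big)=0,
\]
so the differential of $s_{\boldsymbol{\omega}}$ at $\phi=0$ is (on the $\phi_1$ part) the antisymmetrisation map $\Hom(E_k,E_k^\vee)\to\bigwedge^2 E_k^\vee$, which is surjective, of rank $\binom{k}{2}$. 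Hence $\dim T_{[E_k]}\IG_{\boldsymbol{\omega}}(k,2N)=\dim\Gr(k,2N)-\binom{k}{2}$, and smoothness of $\IG$ forces $d=\dim\Gr(k,2N)-\binom{k}{2}$; as a by-product $T_{[v_{\omega_k}]}\IG(k,2N)=\Sym^2 E_k^\vee\oplus(E_k^\vee\otimes E_k^\perp/E_k)$, recovering the description of $\mathfrak{p}_k^u$. Therefore
\[
\dim\IG_{\boldsymbol{\omega}}(k,2N)=k(2N-k)-\frac{k(k-1)}{2}=k\left(\frac{4N-3k+1}{2}\right),
\]
in agreement with $\frac{k(k+1)}{2}+k(2N-2k)$ coming from the Levi decomposition.

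I expect the main obstacle to be precisely the rank-$\binom{k}{2}$ claim for $ds_{\boldsymbol{\omega}}$ at $[E_k]$ — equivalently, the regularity of $s_{\boldsymbol{\omega}}$, i.e.\ that the isotropy equations cut out a local complete intersection and $Z(s_{\boldsymbol{\omega}})$ has no excess-dimensional component; the displayed linearisation makes it routine, but it is the one computational point. A more machinery-heavy alternative avoids it altogether: $\mathcal{E}=\bigwedge^2\mathcal{U}^\vee$ is globally generated, so by Kleiman--Bertini a general global section has smooth zero locus of pure codimension $\binom{k}{2}$ (nonempty, since $\binom{k}{2}\le\dim\Gr(k,2N)$ for $k\le N-1$); a general section of $\mathcal{E}$ is a general, hence nondegenerate, alternating form on $\mathbb{C}^{2N}$, which is $\GL_{2N}$-conjugate to $\boldsymbol{\omega}$, and $\GL_{2N}$ acts on $\Gr(k,2N)$ carrying one zero locus onto the other; so $Z(s_{\boldsymbol{\omega}})=\IG_{\boldsymbol{\omega}}(k,2N)$ has the same pure codimension $\binom{k}{2}$, yielding the dimension above.
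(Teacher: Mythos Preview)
Your proposal is correct and follows exactly the approach the paper sketches in the remark itself: realise $\IG_{\boldsymbol{\omega}}(k,2N)$ as the zero locus of the section $s_{\boldsymbol{\omega}}$ of a rank-$\binom{k}{2}$ bundle on $\Gr(k,2N)$ and read off the codimension. The paper states this in one line and invokes the ``fiber dimension theorem'' without justifying regularity; your linearisation at $[E_k]$ (and the alternative Kleiman--Bertini/$\GL_{2N}$--conjugacy argument) is precisely the missing verification. One cosmetic point: the paper writes $\bigwedge^2\cal U$ whereas you write $\bigwedge^2\cal U^\vee$; the latter is the bundle whose global sections are $\bigwedge^2(\mathbb C^{2N})^\vee$ via Borel--Weil, so your version is the correct one, but the rank is $\binom{k}{2}$ either way and the conclusion is unaffected.
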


\begin{prop}\label{prop:dimensions IG}
	In the previous notation, the orbit $\cal O_{(r,h,t)}$ has dimension
	\begin{equation}\label{dimensions of P-orbits}
	\dim \cal O_{(r,h,t)}= \frac{r(r+1)}{2} + (r+h)(2N-k-r) + \frac{t(t+1)}{2}-h^2-t^2 \ .
	\end{equation}
\end{prop}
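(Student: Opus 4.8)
The plan is to compute $\dim\mathcal{O}_{(r,h,t)}$ directly from the explicit description in Theorem \ref{thm:orbits in IG}, by peeling the orbit apart through a short chain of fibrations whose bases and fibres are all homogeneous under $\GL(E_k)$ or $\Sp(E_k^\perp/E_k)$, so that the fibre dimension theorem applies cleanly at every stage. Throughout I write $W:=E_k^\perp/E_k$ for the symplectic space of dimension $2p:=2N-2k$, and the constraints \eqref{constraints on invariants} will be used precisely to guarantee that each variety appearing below is nonempty.

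\emph{Splitting off the quadratic part.} The projection $\mathcal{O}_{(r,h,t)}\to\{\sigma\in\Sym^2E_k^\vee:\rk\sigma=r\}$, $\sigma+H\mapsto\sigma$, is $\GL(E_k)$-equivariant (the unipotent radical acts trivially on the $\Sym^2E_k^\vee$-component) and surjects onto a single $\GL(E_k)$-orbit, so all fibres are isomorphic. The base is the rank-$r$ locus in $\Sym^2\mathbb{C}^k$: writing a rank-$r$ form as $\sum_{i=1}^r\ell_i^2$ with $\ell_i\in E_k^\vee$ independent and dividing by the $\binom r2$-dimensional orthogonal group fixing $\sum_i\ell_i^2$, it has dimension $kr-\binom r2$. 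Over the standard representative $\sigma_r$, the invariants $h,t$ of $\sigma_r+H$ depend only on the class $\bar H$ of $H$ modulo $\supp(\sigma_r)^\vee\otimes W$ -- this is exactly the computation of the $P^u$-action preceding Remark \ref{rmk:h well defined as invariant} -- so after choosing a complement the fibre is the product of the free factor $\supp(\sigma_r)^\vee\otimes W$, of dimension $2pr$, with
\[ \mathcal H:=\left\{\bar H\in\Hom\!\left(E_k/\supp\sigma_r,\,W\right)\ :\ \rk\bar H=h,\ \ \rk\!\left(\boldsymbol\omega'|_{\im\bar H}\right)=h-t\right\}. \]

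\emph{The linear part.} Map $\mathcal H\to\{Q\in\Gr(h,W):\dim(Q\cap Q^{\perp})=t\}$ by $\bar H\mapsto\im\bar H$; this is surjective, and the fibre over $Q$ is the dense open set of surjections $\mathbb{C}^{k-r}\twoheadrightarrow Q$ inside $\Hom(\mathbb{C}^{k-r},Q)\cong\mathbb{C}^{(k-r)h}$ (using $h\le k-r$). For the base, send $Q\mapsto Q\cap Q^{\perp}$: this surjects onto $\IG(t,W)$, with fibre over an isotropic $t$-plane $R$ equal to the set of nondegenerate $(h-t)$-dimensional subspaces of $R^{\perp}/R\cong\mathbb{C}^{2p-2t}$, i.e.\ $\Sp_{2p-2t}/\bigl(\Sp_{h-t}\times\Sp_{2p-h-t}\bigr)$, of dimension $(h-t)(2p-h-t)$; and $\dim\IG(t,W)=\tfrac{t(4p-3t+1)}2$ by Remark \ref{rmk:dim IG} applied with $(k,2N)$ replaced by $(t,2p)$. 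Assembling the four identities gives
\[ \dim\mathcal{O}_{(r,h,t)}=\Bigl(kr-\tbinom r2\Bigr)+2pr+(k-r)h+\frac{t(4p-3t+1)}2+(h-t)(2p-h-t), \]
and a routine expansion with $p=N-k$, collecting the $r$-, $h$- and $t$-terms, collapses the right-hand side to $\tfrac{r(r+1)}2+(r+h)(2N-k-r)+\tfrac{t(t+1)}2-h^2-t^2$, which is \eqref{dimensions of P-orbits}.

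The only step that is not purely formal is the dimension of the Grassmannian rank stratum $\{Q\in\Gr(h,W):\dim(Q\cap Q^{\perp})=t\}$: one must check that $Q\mapsto Q\cap Q^{\perp}$ is a genuine $\Sp(W)$-equivariant fibration onto $\IG(t,W)$ (it is, since $\Sp(W)$ acts transitively on the base and, through the stabilizer of $R$, on each fibre) and identify its fibre with the homogeneous space of symplectic subspaces $\Sp_{2p-2t}/(\Sp_{h-t}\times\Sp_{2p-h-t})$; I expect this to be the main obstacle. Everything else is routine: the rank stratification of $\Sym^2\mathbb{C}^k$, the space of surjections $\mathbb{C}^{k-r}\twoheadrightarrow Q$, and the nonemptiness of every stratum -- exactly what \eqref{constraints on invariants} encodes. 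As a sanity check, $(r,h,t)=(k,0,0)$ returns $\tfrac{k(k+1)}2+k(2N-2k)=\dim\mathfrak p_k^u$, and $(r,h,t)=(k-1,1,1)$ returns a value one smaller, consistent with $\overline{\mathcal O_{(k-1,1,1)}}$ being the determinantal hypersurface complement of the dense orbit.
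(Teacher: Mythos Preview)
Your proof is correct and follows essentially the same strategy as the paper's: both split off the rank--$r$ stratum in $\Sym^2 E_k^\vee$, then stratify the linear part by the image $Q\in\Gr(h,W)$ subject to $\dim(Q\cap Q^\perp)=t$, and compute the latter by fibering over $\IG(t,W)$ with fibre the open set of nondegenerate $(h-t)$--planes in $R^\perp/R$. The paper packages the second and third steps into a single incidence variety $\mathcal I_{h,t}\subset \IG(t,W)\times\Gr(h,W)$ and maps $\mathcal O_{(r,h,t)}\to [\Sym^2 E_k^\vee]_r\times\mathcal I_{h,t}$ in one go, whereas you unwind the same chain of fibrations sequentially; the ingredients and dimension counts coincide term by term.
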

\begin{proof}
	For any $h,t$ satisfying conditions \eqref{constraints on invariants}, consider the incidence variety 
	\[ \cal I_{h,t}:=\left\{(Q',Q)\in \IG_{\boldsymbol{\omega}'}(t,E_k^\perp\!/E_k)\times \Gr(h,E_k^\perp\!/E_k) \ | \ Q'=Q\cap Q^\perp \right \}\]
	whose projection onto the first factor $\pi: \cal I_{h,t}\rightarrow \IG_{\boldsymbol{\omega}'}(t,E_k^\perp\!/E_k)$ has fiber 
	\begin{align*}
	\pi^{-1}(W) \simeq \left\{ Q \in \Gr(h,E_k^\perp\!/E_k) \ | \ Q\cap Q^\perp=W\right\} \simeq \left\{ \tilde{Q} \in \Gr(h-t,W^\perp\!/W) \ | \ \tilde{Q}\cap \tilde{Q}^\perp=\{0\}\right\} \ . 
	\end{align*}
	As the latter set is a dense in the Grassmannian $\Gr(h-t,W^\perp\!/W)$, from the fiber dimension theorem we get
	\begin{align*}
	\dim \cal I_{h,t} & = \dim \IG_{\boldsymbol{\omega}'}(t,E_k^\perp\!/E_k) + \dim \Gr(h-t,2N-2k-2t) \\
	& = \frac{t(t+1)}{2} + t(2N-2k-2t) + (h-t)(2N-2k-h-t) \ .
	\end{align*}
	\indent Let $\big[ \Sym^2\mathbb C^k \big]_r$ be the set of $k\times k$ symmetric matrices of rank $r$, having dimension $\frac{r(r+1)}{2}+r(k-r)$. For any triplet of invariants $(r,h,t)$ we consider the fibration
	\[ \begin{matrix} \rho : & \cal O_{(r,h,t)} & \longrightarrow & \left[\Sym^2\!E_k^\vee\right]_r \times \cal I_{h,t} \\
	& \sigma + H & \mapsto & \left( \ \sigma \ , \ [ \ \im(H)\cap \im(H)^\perp \subset \im(H) \ ] \ \right) \end{matrix} \ . \]
	Given $U:=\langle e_{-k+r}, \ldots, e_{-1}\rangle_\mathbb C\subset E_k^\vee$ and $\supp(\sigma_r)^\vee=\langle e_{-k},\ldots, e_{-k+r-1}\rangle_\mathbb C\subset E_k^\vee$, the fiber of $\rho$ at the point $x_{\sigma_r,Q}:=\big(\sigma_r, [Q\cap Q^\perp \subset Q]\big)$ is
	\[ \rho^{-1}\big(x_{\sigma_r,Q}\big) = \left\{ \sigma_r + \sum_{i=1}^h v_i \otimes q_i + H' \ \bigg| \ v_i \in U, \ H' \in \supp(\sigma_r)^\vee \otimes E_k^\perp\!/E_k \right\}\]
	and it has dimension $\dim (U\otimes \mathbb C^h) + \dim\!\big( \supp(\sigma_r)^\vee \otimes E_k^\perp\!/E_k \big) = h(k-r) + r(2N-2k)$. From the fiber dimension theorem again, we deduce
	\begin{align*}
	\dim \cal O_{(r,h,t)} & = \dim \left[\Sym^2\!E_k^\vee \right]_r + \dim \cal I_{h,t} + \dim \rho^{-1}(x_{\sigma_r,Q})\\
	& = \frac{r(r+1)}{2} + (r+h)(2N-k-r) + \frac{t(t+1)}{2}-h^2-t^2 \ .
	\end{align*}
\end{proof}

\begin{es}\label{es:tangent space to IG(3,8)}
	Consider the $12$--dimensional isotropic Grassmannian $\IG(3,8)\subset \mathbb P(\bigwedge^3\mathbb C^8)\simeq \mathbb P^{56}$. From the constraints in \eqref{constraints on invariants}, the invariants $(r,h,t)$ are such that 
	\[ 0\leq r \leq 3 \ \ \ , \ \ \ h\leq \min\{2,3-r\} \ \ \ , \ \ \ t\leq \min\{h,1\} \ \ \ , \ \ \ t \equiv h \ \text{(mod $2$)} \ . \]
	In particular, in such a case, the value $t$ is uniquely determined by the value $h$: the first non trivial case in which $r,h,t$ are not redundant is $\IG(3,10)$. The tangent space $\mathfrak{sp}_8/\mathfrak{p}_3$ has the following poset of $P_3$--orbits, where the arrows denote the inclusion of an orbit into the closure of another.
	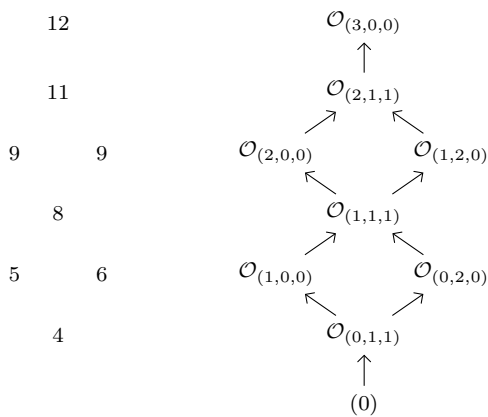
\begin{figure}[H]
		\begin{center}
			{\footnotesize \begin{tikzpicture}[scale=2.1]
				
				\node(000) at (0,-0.4){{$(0)$}};
				
				\node(011) at (0,0){{$\cal O_{(0,1,1)}$}};
				\node(011dim) at (-1.75,0){$4$};
				
				\node(100) at (-0.5,0.35){$\cal O_{(1,0,0)}$};
				\node(100dim) at (-2,0.35){$5$};
				
				\node(020) at (0.5,0.35){{$\cal O_{(0,2,0)}$}};
				\node(020dim) at (-1.5, 0.35){$6$};
				
				\node(111) at (0,0.7){{$\cal O_{(1,1,1)}$}};
				\node(111dim) at (-1.75,0.7){$8$};
				
				\node(200) at (-0.5,1.05){$\cal O_{(2,0,0)}$};
				\node(200dim) at (-2,1.05){$9$};
				
				\node(120) at (0.5,1.05){$\cal O_{(1,2,0)}$};
				\node(120dim) at (-1.5,1.05){$9$};
				
				\node(211) at (0,1.4){{$\cal O_{(2,1,1)}$}};
				\node(211dim) at (-1.75,1.4){$11$};
				
				\node(300) at (0,1.8){$\cal O_{(3,0,0)}$};
				\node(300dim) at (-1.75,1.8){$12$};
				
				\path[font=\scriptsize,>= angle 90]
				(000) edge [->] node [left] {} (011)
				(011) edge [->] node [left] {} (100)
				(111) edge [->] node [left] {} (200)
				(111) edge [->] node [left] {} (120)
				(011) edge [->] node [left] {} (020)
				(100) edge [->] node [left] {} (111)
				(200) edge [->] node [left] {} (211)
				(211) edge [->] node [left] {} (300)
				(020) edge [->] node [right] {} (111)
				(120) edge [->] node [right] {} (211);
				\end{tikzpicture}}
		\end{center}
		\caption{Poset graph of $P_3$--orbits in $\mathfrak{sp}_8/\mathfrak{p}_3$, and their dimensions.}
		\label{figure:graph IG}
	\end{figure}
\end{es}

\subsection{The tangential variety $\tau(\IG(k,2N))$}

We are now interested in recovering the orbit poset of the tangential variety to a non-cominuscule isotropic Grassmannian $\IG(k,2N)$, and eventually determining its (tangential-)identifiable locus. We keep the notation from the first part of this section. For simplicity we write $[E]=[\bold{e}_{[k]}]$ for the highest weight line $[E_k]\in \IG(k,2N)$. Each orbit $\cal O(r,h,t)$ in the tangent space $T_{[E]}\IG(k,2N)$ induces a $\Sp_{2N}$--orbit in the tangential variety, which we denote by 
\[ \Theta_{(r,h,t)} \ := \ \Sp_{2N}\cdot \left[\bold{e}_{[k]} + \sigma_r + H_{(r,h,t)}\right] \ . \]
We stress the fact that {\em a priori} two orbits in the tangent space may define the same orbit in the tangential variety. We show that this is the case indeed. \\
\indent From the isomorphism $E^\vee \simeq \bigwedge^{k-1}E \simeq \mathbb C^{2N}/(E^\perp)$, we rewrite the tangent space as
\begin{equation}\label{eq:tangent to IG inside Gr} 
T_{[E]}\IG(k,2N) \ = \ \Sym^2E^\vee \oplus \left( E^\vee \otimes \dfrac{E^{\perp}}{E} \right) \ \simeq \ \left( \bigwedge^{k-1}E \wedge \dfrac{\mathbb C^{2N}}{E^\perp} \right) \oplus \left( \bigwedge^{k-1}E\wedge \dfrac{E^\perp}{E} \right) \ . 
\end{equation}
The expression on the right-hand-side (RHS) has the advantage to identify $T_{[E]}\IG(k,N)$ inside the tangent space at $[E]$ to the Grassmannian $\Gr(k,N)$: in other words, it allows to look at $T_{[E]}\IG(k,2N)$ as the linear section of $T_{[E]}\Gr(k,2N)$ cutted by $\mathbb P(V_{\omega_k}^{\Sp_{2N}})$ (cf. Remark \ref{rmk:pullback O(1)}).\\
\indent The isomorphism $E^\vee \simeq \bigwedge^{k-1}E$ identifies $e_{-i}$ with $\bold{e}_{[k]\setminus \{i\}}:=e_{1}\wedge \ldots \wedge \widehat{e_{i}} \wedge \ldots \wedge e_{k}$ for any $i \in [k]$, where $\widehat{e_i}$ means that this entry is missing in the wedge product. In particular, for any $i \in [k]$ the quadratic form $e_{-i}^{2}\in \Sym^2E^\vee$ is identified with $\bold{e}_{[k]\setminus \{i\}}\wedge e_{-i}$, and therefore
\[ \sigma_r := \sum_{\ell \in [r]} e_{-k+\ell-1}^2 \ \simeq \ \sum_{\ell \in [r]}\bold{e}_{[k]\setminus \{k-\ell+1\}}\wedge e_{-k+\ell-1} \ , \]
\begin{align*}
H_{(h,t)} 
& := \ \sum_{i=1}^t e_{-i}\otimes e_{k+i} \ + \ \sum_{i=1}^{\frac{h-t}{2}} e_{-t-i}\otimes e_{k+t+i} \ + \ \sum_{i=1}^{\frac{h-t}{2}} e_{-t-\frac{h-t}{2}-i}\otimes e_{-k-t-i} \\
& \simeq \  \sum_{i=1}^t  \bold{e}_{[k]\setminus \{i\}}\wedge e_{k+i} \ + \ \sum_{i=1}^{\frac{h-t}{2}} \bold{e}_{[k]\setminus \{t+i\}} \wedge e_{k+t+i} \ + \ \sum_{i=1}^{\frac{h-t}{2}} \bold{e}_{[k]\setminus \{t+\frac{h-t}{2}+i\}}\wedge e_{-k-t-i} \ .
\end{align*}

\indent The above summands have two-by-two Hamming distance $2$, while each one of them has Hamming distance $1$ from $[E]=\bold{e}_{[k]}$. In particular,
\[ \Theta_{(1,0,0)} \ = \ \Theta_{(0,1,1)} \ = \ \IG(k,2N) \ \ \ \ \ , \ \ \ \ \ \Theta_{(r,h,t)} \neq \IG(k,2N) \ \ \ \forall \ r+h \geq 2 \ .\]
On the other hand, the representatives $[\bold{e}_{[k]} + \sigma_r + H_{(h,t)}]$ for $r+h=2$ are
\begin{align*}
    \bold{e}_{[k]}+ \bold{e}_{[k]\setminus \{1\}}\wedge e_{k+1} + \bold{e}_{[k]\setminus \{2\}}\wedge e_{-k-1} = \bold{e}_{[k]\setminus \{1\}}\wedge (e_{k+1} \pm e_1) + \bold{e}_{[k]\setminus \{2\}}\wedge e_{-k-1} & \in \Theta_{(0,2,0)} \\
    \bold{e}_{[k]}+ \bold{e}_{[k-1]}\wedge e_{-k} + \bold{e}_{[k]\setminus\{1\}}\wedge e_{k+1} = \bold{e}_{[k]\setminus \{1\}}\wedge (e_{k+1} \pm e_1) + \bold{e}_{[k-1]}\wedge e_{-k} & \in \Theta_{(1,1,1)} \\
    \bold{e}_{[k]}+ \bold{e}_{[k-1]}\wedge e_{-k} + \bold{e}_{[k]\setminus \{k-1\}}\wedge e_{-k+1} = \bold{e}_{[k]\setminus\{k-1\}}\wedge(e_{k-1} + e_{-k+1}) + \bold{e}_{[k-1]}\wedge e_{-k} & \in \Theta_{(2,0,0)} 
    \end{align*}
and they have rank$_{\IG(k,2N)}$ $2$, hence each of them lies on a non-degenerate (i.e. non-tangent) bisecant line too. Also, such points lie in the (linear section of the) distance-$2$ orbit $\Sigma_2^{\Gr(k,2N)}$ of the secant variety of lines to the Grassmannian $\Gr(k,2N)$ (cf. \cite[Remark 3.2.4]{galganostaffolani2022grass}). We conclude that 
\[ \tau(\IG(k,2N))\cap \sigma_2^\circ(\IG(k,2N)) \ = \  \Theta_{(2,0,0)} \sqcup \Theta_{(0,2,0)} \sqcup \Theta_{(1,1,1)} \ = \ \Sigma_2^{\Gr(k,2N)} \cap \mathbb P(V_{\omega_k}^{\Sp_{2N}}) \ . \] 

\noindent The above behaviour generalises to the other tangent orbits as follows.

\begin{prop} 
    Let $\Theta_{(r,h,t)}$ be the orbits in the tangential variety to the isotropic Grassmannian $\IG(k,2N)$. Let $\Theta_\ell^{\Gr(k,2N)}$ be the rank--$\ell$ orbit in the tangential variety to the Grassmannian $\Gr(k,2N)$ as defined in Corollary \ref{cor:tangent orbits}. Then the linear section equality $\tau(\IG(k,2N))=\tau(\Gr(k,2N))\cap \mathbb P(V_{\omega_k}^{\Sp_{2N}})$ induces the equality
    \[ \bigsqcup_{r+h=\ell}\Theta_{(r,h,t)} = \Theta_\ell^{\Gr(k,2N)}\cap \mathbb P(V_{\omega_k}^{\Sp_{2N}}) \ . \]
    \end{prop}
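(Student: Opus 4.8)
The plan is to reduce the statement to a single rank computation and then feed it into the linear section identity $\tau(\IG(k,2N))=\tau(\Gr(k,2N))\cap\mathbb P(V_{\omega_k}^{\Sp_{2N}})$ quoted above (which holds exactly as in the Lagrangian case treated in the proof of Theorem~\ref{thm:poset cominuscule}, the crucial point being that $\cal O_{\Gr(k,2N)}(1)$ restricts to $\cal O_{\IG(k,2N)}(1)$, cf.\ Remark~\ref{rmk:pullback O(1)}), together with the fact that $\tau(\Gr(k,2N))=\bigsqcup_{\ell\geq 1}\Theta_\ell^{\Gr(k,2N)}$ is a \emph{disjoint} union in which, by Corollary~\ref{cor:tangent orbits} applied to $\Gr(k,2N)$, the orbit $\Theta_\ell^{\Gr(k,2N)}$ consists of the points $[x+v]$ with $x\in\Gr(k,2N)$ and $v$ a tangent direction of matrix-rank $\ell$.

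The key claim to establish is: for every admissible triple $(r,h,t)$ satisfying~\eqref{constraints on invariants}, the tangent direction $\sigma_r+H_{(h,t)}\in T_{[\bold{e}_{[k]}]}\IG(k,2N)$, once viewed inside $T_{[\bold{e}_{[k]}]}\Gr(k,2N)\simeq E^\vee\otimes(\mathbb C^{2N}/E)$ through the identification~\eqref{eq:tangent to IG inside Gr}, has rank exactly $r+h$. I would prove this using $E^\vee\simeq\bigwedge^{k-1}E$, $e_{-i}\mapsto\bold{e}_{[k]\setminus\{i\}}$: under it, the quadratic part $\sigma_r=\sum_{\ell=1}^r e_{-k+\ell-1}^2$ becomes a linear map $E\to\mathbb C^{2N}/E$ of rank $r$ whose image is the lift of $\langle\bar e_{-i}: k-r<i\leq k\rangle\subset\mathbb C^{2N}/E^\perp$ and whose only nonzero columns are those indexed by $\supp(\sigma_r)=\langle e_{k-r+1},\dots,e_k\rangle$, while $H_{(h,t)}$ (the representative of~\eqref{representative of P-orbits}) becomes a linear map $E\to E^\perp/E\subset\mathbb C^{2N}/E$ of rank $h$ whose only nonzero columns are indexed by $e_1,\dots,e_h$. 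Since $h\leq k-r$ by~\eqref{constraints on invariants}, these two column-index sets are disjoint; since moreover $E^\perp/E$ and the lift of $\mathbb C^{2N}/E^\perp$ are complementary subspaces of $\mathbb C^{2N}/E$, the two images meet only at $0$, so the ranks add up and $\rk(\sigma_r+H_{(h,t)})=r+h$.

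Granting the claim, the two inclusions are essentially formal. First, $[\bold{e}_{[k]}+\sigma_r+H_{(h,t)}]\in\Theta_{r+h}^{\Gr(k,2N)}$; since $\Sp_{2N}\subset\SL_{2N}$ preserves both $\mathbb P(V_{\omega_k}^{\Sp_{2N}})$ and the $\SL_{2N}$-orbit $\Theta_{r+h}^{\Gr(k,2N)}$, the whole orbit $\Theta_{(r,h,t)}$ lies in $\Theta_{r+h}^{\Gr(k,2N)}\cap\mathbb P(V_{\omega_k}^{\Sp_{2N}})$, and taking the union over $r+h=\ell$ yields $\bigsqcup_{r+h=\ell}\Theta_{(r,h,t)}\subseteq\Theta_\ell^{\Gr(k,2N)}\cap\mathbb P(V_{\omega_k}^{\Sp_{2N}})$. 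Conversely, a point of $\Theta_\ell^{\Gr(k,2N)}\cap\mathbb P(V_{\omega_k}^{\Sp_{2N}})$ lies in $\tau(\Gr(k,2N))\cap\mathbb P(V_{\omega_k}^{\Sp_{2N}})=\tau(\IG(k,2N))$, hence---by Theorem~\ref{thm:orbits in IG}, $\tau(\IG(k,2N))$ being the union of the $\Sp_{2N}$-translates of $T_{[\bold{e}_{[k]}]}\IG(k,2N)$---it belongs to some $\Theta_{(r,h,t)}$; by the first inclusion it then lies in $\Theta_{r+h}^{\Gr(k,2N)}$, and disjointness of the Grassmannian tangent orbits forces $r+h=\ell$, i.e.\ the point lies in $\bigsqcup_{r+h=\ell}\Theta_{(r,h,t)}$. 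I expect the only delicate point to be the rank computation of the second paragraph---in particular keeping straight that the column-index sets of $\sigma_r$ and of $H_{(h,t)}$ are disjoint and that their images lie in complementary pieces of $\mathbb C^{2N}/E$---while everything else is either already available or routine; as a check, the conclusion for $\ell=1,2$ matches the computations carried out by hand just before the statement.
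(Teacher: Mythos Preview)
Your proof is correct and follows the same approach the paper indicates. The paper does not give a formal proof of this proposition: it states it as a direct generalisation of the preceding discussion, where the representatives $\sigma_r+H_{(h,t)}$ are written explicitly as sums of $r+h$ rank--$1$ summands in $\bigwedge^{k-1}E\wedge(\mathbb C^{2N}/E)$ and it is observed that these summands have pairwise Hamming distance $2$. Your rank computation---disjoint column supports and images in complementary pieces of $\mathbb C^{2N}/E$---is exactly what makes that observation precise, and your two-inclusion argument using $\Sp_{2N}\subset\SL_{2N}$ and disjointness of the $\Theta_\ell^{\Gr(k,2N)}$ is the natural way to conclude.

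One small remark: the $\bigsqcup$ in the statement is a slight abuse of notation on the paper's side, since for $\ell=1$ one has $\Theta_{(1,0,0)}=\Theta_{(0,1,1)}=\IG(k,2N)$; your argument correctly proves the equality of the \emph{union} with the linear section, which is what is meant.
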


\begin{es}\label{es:tangential IG(3,8)}
We go back to Example \ref{es:tangent space to IG(3,8)}. Then \autoref{figure:graph tau IG} shows how the $P$--orbit poset graph for $T_{[E]}\IG(3,8)$ \textquotedblleft degenerate\textquotedblright \space to the $G$--orbit poset graph for $\tau(\IG(3,8))$. Violet, red and green orbits arise from linear sections of $\Gr(3,8), \Sigma_2^{\Gr(3,8)}, \Theta_3^{\Gr(3,8)}$ respectively.

\begin{figure}[h]
\begin{center}
		\begin{minipage}[l]{0.49\textwidth}
        \centering
			{\footnotesize \begin{tikzpicture}[scale=2]
				
				\node(000) at (0,-0.4){\textcolor{violet}{$(0)$}};
				
				\node(011) at (0,0){\textcolor{violet}{$\cal O_{(0,1,1)}$}};
				
				\node(100) at (-0.5,0.35){\textcolor{violet}{$\cal O_{(1,0,0)}$}};
				
				\node(020) at (0.5,0.35){\textcolor{red}{$\cal O_{(0,2,0)}$}};
				
				\node(111) at (0,0.7){\textcolor{red}{$\cal O_{(1,1,1)}$}};
				
				\node(200) at (-0.5,1.05){\textcolor{red}{$\cal O_{(2,0,0)}$}};
				
				\node(120) at (0.5,1.05){\textcolor{green}{$\cal O_{(1,2,0)}$}};
				
				\node(211) at (0,1.4){\textcolor{green}{$\cal O_{(2,1,1)}$}};
				
				\node(300) at (0,1.8){\textcolor{green}{$\cal O_{(3,0,0)}$}};
				
				\path[font=\scriptsize,>= angle 90]
				(000) edge [->] node [left] {} (011)
				(011) edge [->] node [left] {} (100)
				(111) edge [->] node [left] {} (200)
				(111) edge [->] node [left] {} (120)
				(011) edge [->] node [left] {} (020)
				(100) edge [->] node [left] {} (111)
				(200) edge [->] node [left] {} (211)
				(211) edge [->] node [left] {} (300)
				(020) edge [->] node [right] {} (111)
				(120) edge [->] node [right] {} (211);
				\end{tikzpicture}}
		\end{minipage}
    \hfill
		\begin{minipage}[r]{0.49\textwidth}
        \centering
			{\footnotesize \begin{tikzpicture}[scale=2.3]
				
                
				\node(000) at (0,0){\textcolor{violet}{$\IG(k,2N)$}};

                \node(020) at (0,0.35){\textcolor{red}{$\Theta_{(0,2,0)}$}};
				
				\node(111) at (0,0.7){\textcolor{red}{$\Theta_{(1,1,1)}$}};
				
				\node(200) at (-0.5,1.05){\textcolor{red}{$\Theta_{(2,0,0)}$}};
				
				\node(120) at (0.5,1.05){\textcolor{green}{$\Theta_{(1,2,0)}$}};
				
				\node(211) at (0,1.4){\textcolor{green}{$\Theta_{(2,1,1)}$}};
				
				\node(300) at (0,1.8){\textcolor{green}{$\Theta_{(3,0,0)}$}};
				
				\path[font=\scriptsize,>= angle 90]
				(000) edge [->] node [left] {} (020)
				(020) edge [->] node [left] {} (111)
				(111) edge [->] node [left] {} (120)
				(111) edge [->] node [left] {} (200)
				(200) edge [->] node [left] {} (211)
				(211) edge [->] node [left] {} (300)
				(020) edge [->] node [right] {} (111)
				(120) edge [->] node [right] {} (211);
				\end{tikzpicture}}
		\end{minipage}
		\caption{Poset graph of $\Sp_8$--orbits in $\tau(\IG(3,8))$.}
		\label{figure:graph tau IG}
  \end{center}
	\end{figure}
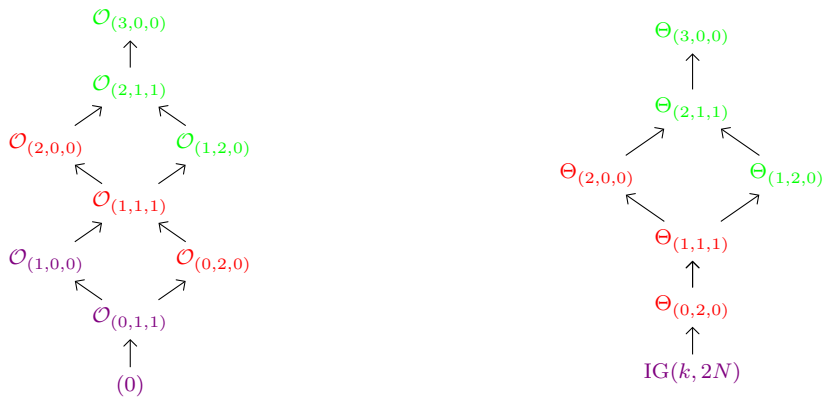
\end{es}

\noindent We recall that a point in the tangential variety is tangential-identifiable if it lies on a unique tangent space.

\begin{prop}
    The tangential-identifiable locus of the tangential variety $\tau(\IG(k,2N))$ coincides with the union of all tangent orbits $\Theta_{(r,h,t)}$ such that $r+h\geq 3$.
\end{prop}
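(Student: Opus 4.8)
The plan is to reduce the whole statement to the corresponding --- and already settled --- fact for the Grassmannian $\Gr(k,2N)$, exploiting the linear-section picture established just above: $\IG(k,2N)=\Gr(k,2N)\cap\mathbb P(V_{\omega_k}^{\Sp_{2N}})$, the equality $\tau(\IG(k,2N))=\tau(\Gr(k,2N))\cap\mathbb P(V_{\omega_k}^{\Sp_{2N}})$, and $\bigsqcup_{r+h=\ell}\Theta_{(r,h,t)}=\Theta_\ell^{\Gr(k,2N)}\cap\mathbb P(V_{\omega_k}^{\Sp_{2N}})$. As the preliminary input I would record the description of the tangential-identifiable locus of $\sigma_2(\Gr(k,2N))$: by \cite[Secc.\ 4-5]{galganostaffolani2022grass} --- equivalently by Theorem \ref{thm:identifiability} together with the poset of Theorem \ref{thm:poset cominuscule}, which gives $\overline{\Theta_\ell^{\Gr(k,2N)}}\cap\sigma_2^\circ(\Gr(k,2N))=\overline{\Sigma_2^{\Gr(k,2N)}}$ and hence shows that for $\ell\geq 3$ the orbit $\Theta_\ell^{\Gr(k,2N)}$ misses $\sigma_2^\circ(\Gr(k,2N))$, so none of its points lies on a secant line --- the tangential-identifiable locus of $\sigma_2(\Gr(k,2N))$ is exactly $\bigsqcup_{\ell\geq 3}\Theta_\ell^{\Gr(k,2N)}$; in particular each of its points lies on a unique tangent space to $\Gr(k,2N)$ and on no secant line to $\Gr(k,2N)$.

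For the inclusion ``$\supseteq$'' I would fix $q\in\Theta_{(r,h,t)}$ with $\ell:=r+h\geq 3$. Then $q\in\Theta_\ell^{\Gr(k,2N)}\cap\mathbb P(V_{\omega_k}^{\Sp_{2N}})$, so by the input above $q$ lies on a unique tangent space $T_x\Gr(k,2N)$ and on no secant line to $\Gr(k,2N)$. Since $\Theta_{(r,h,t)}\subseteq\tau(\IG(k,2N))$ there is some $y\in\IG(k,2N)$ with $q\in T_y\IG(k,2N)$; because $\IG(k,2N)$ is cut out of $\Gr(k,2N)$ by the linear subspace $\mathbb P(V_{\omega_k}^{\Sp_{2N}})$, one has $T_z\IG(k,2N)=T_z\Gr(k,2N)\cap\mathbb P(V_{\omega_k}^{\Sp_{2N}})\subseteq T_z\Gr(k,2N)$ for every $z\in\IG(k,2N)$, so $q\in T_y\Gr(k,2N)$ and uniqueness forces $y=x$; in particular $x\in\IG(k,2N)$ and $q$ lies on exactly one tangent space to $\IG(k,2N)$. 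Likewise $q$ lies on no secant line to $\IG(k,2N)$, such a line being a secant line to $\Gr(k,2N)$. Therefore $\pi_\sigma^{-1}(q)\subset Ab\sigma_2(\IG(k,2N))$ is a singleton corresponding to the unique non-reduced point of $\Hilb_2(\IG(k,2N))$ supported at $x$; since moreover $\Theta_{(r,h,t)}\cap\IG(k,2N)=\emptyset$ (distinct $\Sp_{2N}$-orbits), the point $q$ lies in $\tau(\IG(k,2N))\setminus\IG(k,2N)$ and is tangential-identifiable.

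For the inclusion ``$\subseteq$'' I would check that no orbit $\Theta_{(r,h,t)}$ with $r+h\leq 2$ contributes. Those with $r+h=1$, namely $\Theta_{(1,0,0)}=\Theta_{(0,1,1)}=\IG(k,2N)$, lie inside $\IG(k,2N)$ and are excluded by the definition of tangential-identifiability. Those with $r+h=2$, namely $\Theta_{(2,0,0)}$, $\Theta_{(1,1,1)}$ and $\Theta_{(0,2,0)}$, consist of points of $\IG(k,2N)$-rank $2$ --- this is precisely what was recorded above in the description of $\tau(\IG(k,2N))\cap\sigma_2^\circ(\IG(k,2N))$, each such point lying on a genuine (non-tangent) bisecant line. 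Hence for such a $q$ the fibre $\pi_\sigma^{-1}(q)$ contains a reduced point of $\Hilb_2(\IG(k,2N))$, so it is not the singleton of a non-reduced point, and $q$ is not tangential-identifiable. (When $k=2$ the constraint $r+h\leq k$ forces both sides to be empty, so the statement is vacuous there.)

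The only genuinely delicate step is the transfer of uniqueness of the tangent space from $\Gr(k,2N)$ to $\IG(k,2N)$ in the ``$\supseteq$'' part: it rests on $\IG(k,2N)$ being a \emph{linear} section, so that $T_z\IG(k,2N)\subseteq T_z\Gr(k,2N)$ with equality after intersecting with $\mathbb P(V_{\omega_k}^{\Sp_{2N}})$, together with the previous proposition identifying $\tau(\IG(k,2N))$ with the linear section of $\tau(\Gr(k,2N))$, so that the set of $z\in\IG(k,2N)$ with $q\in T_z\IG(k,2N)$ embeds into the singleton set of $x\in\Gr(k,2N)$ with $q\in T_x\Gr(k,2N)$. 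A more computational alternative, which I would avoid, is to verify uniqueness directly on the normal form $\bold{e}_{[k]}+\sigma_r+H_{(h,t)}$ with $r+h\geq 3$ using the description \eqref{eq:tangent to IG inside Gr} of the tangent space.
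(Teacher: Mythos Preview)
Your proof is correct and follows essentially the same route as the paper: both reduce the $r+h\geq 3$ direction to the tangential-identifiability of $\Theta_\ell^{\Gr(k,2N)}$ via the linear-section relation $T_z\IG(k,2N)=T_z\Gr(k,2N)\cap\mathbb P(V_{\omega_k}^{\Sp_{2N}})$. Your write-up is in fact more careful than the paper's one-line reference, spelling out why uniqueness of the tangent space transfers and why no bisecant line appears.

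The only genuine difference is in the $r+h=2$ direction. The paper argues by exhibiting, for each representative of $\Theta_{(0,2,0)},\Theta_{(1,1,1)},\Theta_{(2,0,0)}$, a \emph{second} explicit isotropic $k$-space whose tangent space also contains it (so the ``unique tangent space'' condition fails directly). You instead invoke the fact, recorded just before the proposition, that these points have $\IG(k,2N)$-rank $2$, hence their fibre in $Ab\sigma_2(\IG(k,2N))$ contains a reduced length-$2$ scheme and cannot be the singleton of a non-reduced one. Both arguments are valid under the paper's definition of tangential-identifiability; yours is slightly slicker in that it reuses work already done, while the paper's has the virtue of pinpointing a concrete second tangent space.
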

\begin{proof}
 For any $r+h=\ell \geq 3$, the tangential-identifiability of points in the orbit $\Theta_{(r,h,t)}$ follows from the tangential-identifiability of the orbits $\Theta_{\ell}^{\Gr(k,2N)}$ (cf. \cite[Sec.\ 5]{galganostaffolani2022grass}). For $r+h=2$, the representatives of the orbits $\Theta_{(0,2,0)},\Theta_{(1,1,1)}, \Theta_{(2,0,0)}$ lie on the tangent spaces at $e_2\wedge \ldots \wedge e_k \wedge e_{-k-1}$, $(e_{k+1}\pm e_1)\wedge e_2\wedge \ldots \wedge e_{k-1}\wedge e_{-k}$ and $e_1\wedge \ldots \wedge e_{k-2}\wedge (e_{k-1} + e_{-k+1})\wedge e_{-k}$ respectively, besides lying on the tangent space at $\bold{e}_{[k]}$.
\end{proof}

{\tiny
\printbibliography
}

\end{document}